\newcommand\rurl[1]{%
  \href{https://#1}{\nolinkurl{#1}}%
}
\theoremstyle{plain}
\newtheorem{definition}{Definition}[section]
\newtheorem{theorem}[definition]{Theorem}
\newtheorem{lemma}[definition]{Lemma}
\newtheorem{proposition}[definition]{Proposition}
\newtheorem*{theorem*}{Theorem}
\theoremstyle{definition}
\newtheorem{remark}[definition]{Remark}
\newtheorem*{remark*}{Remark}
\newtheorem*{remarks*}{Remarks}
\numberwithin{equation}{section}
\def\R{{\mathbb R}}
\def\T{{\mathbb T}}
\def\C{{\mathbb C}}
\newcommand{\Rd}{{\R^d}}
\newcommand{\Cd}{{\C^d}}
\newcommand{\diag}{\mathrm{diag}}
\newcommand{\ltd}{{L^2(\R^d)}}
\newcommand{\re}{{\mathrm{Re} \,}}
\newcommand{\im}{{\mathrm{Im} \,}}
\newcommand{\Vb}{{V}}
\newcommand{\AF}{\mathrm{A}}
\newcommand{\W}{\mathrm{W}}
\newcommand{\J}{{\mathcal{J}}}
\newcommand{\V}{{\mathcal{V}}}
\newcommand{\D}{\mathcal{D}}
\newcommand{\Ro}{\mathcal{R}}
\newcommand{\A}{\mathcal{A}}
\newcommand{\bJ}{\bm{\mathcal{J}}}
\newcommand{\bV}{\bm{\mathcal{V}}}
\newcommand{\bD}{\bm{\mathcal{D}}}
\newcommand{\bRo}{\bm{\mathcal{R}}}
\newcommand{\bA}{{\bm{\mathcal{A}}}}
\newcommand{\bB}{{\bm{\mathcal{B}}}}
\newcommand{\F}{{\mathcal{F}}}
\newcommand\thankssymb[1]{\textsuperscript{\@fnsymbol{#1}}}
\newcommand{\Sp}{{\mathrm{Sp}(4d,\R)}}
\newcommand{\Spp}{{\mathrm{Sp}(2d,\R)}}
\newcommand{\Mpp}{{\mathrm{Mp}(2d,\R)}}
\def\@makefnmark{%
  \leavevmode
  \raise.9ex\hbox{\fontsize\sf@size\z@\normalfont\tiny\@thefnmark}}
\def\bign#1{\mathclose{\hbox{$\left#1\vbox to8.5\p@{}\right.\n@space$}}\mathopen{}}
\newcommand{\abs}[1]{\lvert #1\rvert}
\newcommand{\up}{uncertainty principle}
\newcommand{\tfr}{time-frequency representation}
\newcommand{\ft}{Fourier transform}
\newcommand{\stft}{short-time Fourier transform}
\newcommand{\tf}{time-frequency}
\newcommand{\fif}{if and only if}
\newcommand{\modsp}{modulation space}
\newcommand{\psdo}{pseudodifferential operator}
\newcommand{\beqa}{\begin{eqnarray*}}
\newcommand{\eeqa}{\end{eqnarray*}}
\DeclareMathOperator*{\supp}{supp}
\newcommand{\field}[1]{\mathbb{#1}}
\newcommand{\bR}{\field{R}}        %  real numbers
\newcommand{\bT}{\field{T}}        %  
 \def\cA{\mathcal{A}}
\def\rd{\bR^d}
\def\rdd{{\bR^{2d}}}
\def\lrd{L^2(\rd)}
\def\lrdd{L^2(\rdd)}
\def\intrd{\int_{\rd}}
\def\<{\left<}
\def\>{\right>}
\def\inv{^{-1}}
\def\mv1{M_v^1}
\protected\def\uwave{%
\leavevmode \bgroup
\ifdim \ULdepth =\maxdimen \ULdepth 3.5\p@ \else \advance \ULdepth 2\p@ \fi
\markoverwith {\lower \ULdepth \hbox {\sixly \char 58}}%
\ULon 
}
\protected\def\dwave{%
\leavevmode \bgroup
\markoverwith {%
\lower 3.5\p@ \hb@xt@ \z@{\sixly \char 58\hss}%
\lower 5\p@ \hbox {\sixly \char 58}%
}%
\ULon 
}
\begin{document}
\title[Uncertainty principle for metaplectic time-frequency representations]{Benedicks-type uncertainty principle for metaplectic time-frequency representations}
\address{\thankssymb{1}Faculty of Mathematics, University of Vienna, Oskar-Morgenstern-Platz 1, 1090 Vienna, Austria}

\author[\qquad \qquad \qquad \qquad \quad Karlheinz \ Gr\"ochenig]{Karlheinz Gr\"ochenig\thankssymb{1} 
}
\email{karlheinz.groechenig@univie.ac.at}

\author[Irina Shafkulovska\qquad\qquad\qquad ]{Irina Shafkulovska\thankssymb{1}
}
\email{irina.shafkulovska@univie.ac.at}
\thanks{I. Shafkulovska was funded in part by the Austrian Science Fund (FWF) [\href{https://doi.org/10.55776/P33217}{10.55776/P33217}]. 
 For open access purposes, the authors have applied a CC BY public copyright license to any
author-accepted manuscript version arising from this submission.}
\allowdisplaybreaks
\belowdisplayshortskip0pt

\date{\today}
\maketitle

\begin{abstract}
Metaplectic Wigner distributions
are  joint time-frequency representations that are parametrized by a
symplectic matrix  and  generalize  the short-time Fourier transform and the Wigner distribution.

We investigate the question  which metaplectic Wigner
distributions satisfy an uncertainty principle in the style of
Benedicks and Amrein-Berthier. That is, if the metaplectic Wigner distribution
is supported on a set of finite measure, must the functions then  be
zero? While this statement holds for the \stft , it is false for some  other
natural \tf\ representations.   We  provide a full
characterization of the class of metaplectic Wigner distributions
which exhibit an uncertainty principle of this type,   both for 
sesquilinear and quadratic versions. 

\noindent \textbf{Keywords.} Uncertainty principle, metaplectic Wigner distributions, support of time-frequency representations, symplectic group, metaplectic group, oscillator representation

\noindent \textbf{AMS subject classifications.} 
81S07, 81S30, 22E46
\end{abstract}

\section{Introduction} 

The uncertainty principle refers to a huge collection of inequalities
that assert that a function and its Fourier transform cannot be small
simultaneously. Measuring the size of a function $f$ on the real line  by its   second moment $\Delta f =
\int _\bR  x^2 |f(x)|^2 \, dx$, one obtains the
classical \up, the Heisenberg-Pauli-Weyl inequality %, which asserts that
$$
\Delta f \, \Delta \hat{f} \geq \frac{1}{16 \pi ^2} \|f\|_2^4 \, . 
$$
Here  we use the normalization $\hat{f}(\omega ) = \F f(\omega) = \int_{\bR } f(x)
e^{-2\pi i x\omega } \, dx $ for the Fourier transform of $f$.

Measuring the size of $f$ by the Lebesgue measure of its  support,
one obtains a qualitative \up : % If both $f$ and $\hat{f}$ have compact 
% support, then $f$ is the zero function. This is easy to see from the
% definition and analyticity properties of the Fourier transform. A less obvious
% version is  the following \up :
\begin{center}
    \emph{If both $f$ and $\hat{f}$ have a
support of finite measure, \\ then $f$ is the zero function.}
\end{center}
This \up\ is due to Benedicks~\cite{Benedicks1985} and Amrein and
Berthier \cite{AmreinBerthier1977} and is  often referred to as
Benedicks's \up\  (because of an early preprint
of ~\cite{Benedicks1985}).  

In general, for every measure of size, one can obtain a version of the \up . 
For excellent surveys of such \up s  for the Fourier
pair $(f,\hat{f})$, we recommend the surveys~\cite{FollandSitaram1997,
  HavinJoericke1994, RicaudTorresani2014, BonamiDemange2006}. 

In quantum mechanics and signal processing, one often % For physicists and engineers, $f$ and $\hat{f}$ are merely two
% representations of the same object, e.g., a quantum mechanical state
% or a signal, and they often 
prefers to study % this object  in the
% \tf\ plane or in phase
% space  via a
joint \tf\ representations, such as  the \stft\ or  % the ambiguity function, and
the Wigner distribution, in place of the Fourier pair $(f,\hat{f})$.
The basic \tf\ representation is the   \stft\ of a function $f$ on
$\Rd$ with respect to a  window $g$, which is 
defined as
$$
\Vb_gf(x,\omega ) = \intrd f(t) \bar{g}(t-x) e^{-2\pi i \omega\cdot t}
\, dt \, .
$$
In the standard interpretation of the \stft ,  $\Vb_gf(x,\omega )$ is
the amplitude of the frequency $\omega$ of  $f$ locally at  time $x$. %  With slightly different normalizations the \stft\ is called  the (cross-) ambiguity function 
% $$
% \AF(f,g)(x,\omega) = \int_{\Rd} f(t+\tfrac{x}{2})\overline{g(t-\tfrac{x}{2})}e^{-2\pi i \omega\cdot t}\, dt,
% $$
% and the (cross-) Wigner distribution 
% $$
% \W(f,g)(x,\omega) = \int_{\Rd} f(x+\tfrac{t}{2})\overline{g(x-\tfrac{t}{2})}e^{-2\pi i \omega\cdot t}\, dt.
% $$
% In fact, the ambiguity function differs from the \stft\ by a phase
% factor, and the Wigner distribution is a dilation of the \stft
% ~\cite[Ch.~3,4]{Groechenig2001}.
In this sense, 
the \stft\ % and its variations
carries information simultaneously about $f$
\emph{and} 
$\hat{f}$,  and therefore  one expects a corresponding   \up
. Indeed, the analog of Benedicks's \up\ holds for the \stft
~\cite{Jaming1998,Janssen1998_supp,
  GroechenigZimmermann2001,Wilczok2000}:
\begin{center}
\hspace{1 cm}    \emph{If $ \Vb_gf$ is supported on a set of finite measure, then $f\equiv 0$ or
  $g\equiv 0$.} \hfill   (*)
\end{center}

 The same result holds for close relatives of the \stft, such as  the ambiguity function  and the Wigner
distribution. The proof of (*) uses Benedicks's result for the Fourier
pair $(f,\hat{f})$. In fact, 
there is now an almost systematic technique to translate an  \up\
 from the Fourier pair $(f,\hat{f})$ to an \up\ of  a joint \tfr, e.g., $\Vb_g f$, % and $\W(f,g)$,
 see the survey~\cite{Groechenig2003}.

 The \stft, the ambiguity function,  and the Wigner distribution are undoubtedly the most important
 and most frequently used time-frequency representations. 
 They are used for the definition of
 function spaces~\cite{Feichtinger2006,Groechenig2001},  in the calculus of \psdo s~\cite{Folland1989, Groechenig2001, Cordero_Rodino,BenyiOkodjou2020}, in many
 applications in signal processing~\cite{Woodward1964,
   gabor1946,Torresani1999,CarmonaEtAl1998, Flandrin1999},  and in the phase space formulation
 of quantum mechanics~\cite{Gosson2006,Gosson2011}. 
 Yet there is a whole zoo of equally
 interesting \tfr s available~\cite{HlawatschBoudreaux1992}. 
For instance, % in the theory of \psdo s, one finds the
the popular $\tau $-Wigner distribution goes back at least to~\cite{janssentau}.
 Further extensions were
 studied as linear perturbations of the
 Wigner distribution~\cite{BayerPhD}, as canonical linear
 transforms~\cite{Zhang2019}, free metaplectic transformations \cite{Zhang2023},
 and as symplectic Wigner
 distributions~\cite{CorderoRodino2022}.
 
For this huge class of \tfr s, one expects many \up s to carry over
from the \stft   . This is the topic of our work. 

 To set up the results, we first recall the symplectic group and the
 associated metaplectic representation.
 Let  
\begin{equation}\label{eq:J_def}
    \J =
 \begin{pmatrix}
   0 & I \\ -I & 0
 \end{pmatrix}\in \R^{2d\times 2d}
\end{equation}  
be the standard skew-symmetric matrix in
$\mathrm{GL}(2d,\bR)$. Here $ 0 = 0_{d}$ and $I= I_d$
denote the zero and identity matrix in $\R^{d\times d}$
respectively~\footnote{We include the dimension in the index only when necessary, but will otherwise omit it.}.
 Then the symplectic group  $\mathrm{Sp}(2d,\bR )$  
 consists of all $2d\times 2d$-matrices $\A$ such that $\A^t
 \J \A = \J$. Associated to $\A \in \mathrm{Sp}(2d,\bR )$ is a
 unitary operator $\mu (\A )$ which intertwines the symmetric
 phase-space shifts and acts on $\lrd $ \cite{Groechenig2001, Folland1989, Gosson2011}.

 Now we double the dimension
 and assume that $\bA \in \mathrm{Sp}(4d,\bR ) $. Then $\mu (\bA )$ acts
 on $\lrdd $, and in particular on the elementary tensors $(f \otimes
 g)(x,y) = f(x)g(y)$ for $f,g\in \lrd $. Motivated by  a factorization of the \stft\ and
 the Wigner distribution~\cite[Lem.~3.1.2, 4.3.3]{Groechenig2001}, one can introduce a  \tfr\
 as
 \begin{equation}
   \label{eq:c1}
 \W_{\bA } (f,g) = \mu (\bA ) (f \otimes \bar{g}) \qquad f,g \in \lrd.
 \end{equation} 
Following Cordero and Rodino~\cite{CorderoRodino2022,
  CorderoRodino2023}, who studied \psdo s and \modsp s by means of
these transforms, we call this \tfr\ a metaplectic \tfr\ or an $\bA $-Wigner
distribution.
 All common \tfr s, in particular, the \stft , the ambiguity function, all $\tau $-Wigner
 distributions, the canonical linear transforms, and their
 extension in~\cite{BayerPhD}   can be written as a symplectic \tfr\ with a
 particular $4d \times 4d$ symplectic matrix $\bA $,  see Section~7. The question
 arises whether the usual \up s can be extended and  formulated for
a general symplectic \tfr. One would expect an affirmative answer,
 but after checking a few examples, one begins to understand that this question
 leads to an interesting problem, whose answer is by far not
 obvious. 

 \vspace{3mm}
 
 \textbf{Examples.} 
 {Since the intertwining operators $\mu(\A)\in \Spp$ are unique up to a multiplication with a global phase $\tau\in\T$ \cite{Gosson2011, Folland1989, Groechenig2001}, we omit the global phase in the examples below.} 
 
(i) Let $\bA = \bJ $ be the standard symplectic matrix in $\Sp$. Then $\mu (\bJ
)F = \hat{F}$ for $F\in \lrdd $ is just the Fourier transform. Evidently,
$\W_{\bJ} (f \otimes \overline{g}) = \hat{f} \otimes
\hat{\overline{g}}$ is  supported on a set of finite measure 
(or on a compact set) \fif\ both $\hat{f}$ and $\hat{g}$
have a support of finite measure (compact support) in $\rd$. So there is
no Benedicks-type \up .    

(ii)  Let $L\in \mathrm{GL}(2d,\bR )$ and $$\bA =
\begin{pmatrix}
  L & 0 \\ 0 & L^{-t}
\end{pmatrix} \in \mathrm{Sp}(4d,\bR ) \, .$$
 The corresponding  metaplectic operator is given by a unitary coordinate
 transform $\mu (\bA ) f(\lambda) = |\det L |^{-1/2} F(L \inv \lambda)$ for
 $\lambda\in \rdd $ and $F \in \lrdd $. Clearly, a coordinate transform
 changes the support and its measure, but the compactness and the
 finiteness of the measure are preserved. Thus, if we assume that the
 \tfr\ $\W_{\bA}(f \otimes \overline{g})$ has compact support,
 then  also $f \otimes {g}$ has compact support and every pair $(f,g)$
 of compactly supported functions in $\lrd $ yields a counter-example
 to the expected  version of Benedicks's theorem.

 (iii) Let $Q=Q^t\in \mathrm{GL}(2d,\bR )$ be symmetric and $$\bA = \begin{pmatrix}
  I & 0 \\ Q & I
\end{pmatrix} \in \mathrm{Sp}(4d,\bR ) \, .$$
Then the operator  $\mu (\bA ) F(\lambda) = e^{i\pi \lambda\cdot Q\lambda} F(\lambda)$ for $F\in \lrdd $ amounts to the
multiplication by a ``chirp''. Thus, $\supp\,\W_{\bA} (f \otimes \overline{g})
= \supp \, (f\otimes \overline{g})$, and again  every pair $(f,g)$
 of compactly supported functions in $\lrd $ yields a counter-example
 to the analogous version of Benedicks's theorem.

(iv)  By contrast, we can represent the \stft, the ambiguity function,
and  the Wigner  distribution in the form $\W_\bA (f,g)$ for some
 symplectic matrix $\bA \in \mathrm{Sp}(4d,\bR )$. In this case,
 if 
 $\supp\,\W_\bA (f,g)$ has finite measure, then $f\equiv 0$ or
 $g\equiv 0$ by the results in~\cite{Jaming1998,Janssen1998_supp,
  GroechenigZimmermann2001,Wilczok2000}. Thus a Benedicks-type \up\
holds. 

 These examples show that some metaplectic \tfr s obey an  \up\ in the
 style of Benedicks, whereas others violate this \up. % So what is going
 % on?
 These initial examples look rather puzzling and have struck our
 curiosity and form the motivation for our work. 

 We investigate the Benedicks-type \up\ for general metaplectic \tfr s
 and will  provide a complete description of those 
 \tfr s that yield a Benedicks-type \up. As this description requires
 some structure theory of the symplectic group and many technical
 preliminaries, we formulate the  most important special case of
 Benedicks's  \up\ in order to offer a flavour of our general result.

 \begin{theorem} \label{thm:tmintro}
Let $\bA = \begin{psmallmatrix}  A & B \\ C & D\end{psmallmatrix}
%\begin{pmatrix}
  % A & B \\ -B & A
%\end{pmatrix} 
\in \mathrm{Sp}(4d,\bR )$, and assume that $B$ is
invertible. Consider the 
%decomposition of $B\inv A$ into $d\times d$
% blocks $B\inv A = 
% \begin{psmallmatrix} P_{11} & P_{12} \\ P_{21} & P_{22}\end{psmallmatrix}
% $.
symmetric matrix $P = B^{-1}A = \begin{psmallmatrix} P_{11} & P_{12} \\ P_{21} & P_{22}\end{psmallmatrix}$.
\begin{enumerate}[(i)] 
    \item If     $P$ is not block-diagonal, i.e., $P_{12} = P_{21}^t
      \neq 0$, then $\W_{\bA }(f,g)$
satisfies the
Benedicks-type \up:
\begin{center}
    If $\W_{\bA} (f,g)$ is supported on a set of finite measure, \\ then $f\equiv 0$ or $g\equiv
0$.
\end{center}
\item If $P$ is block-diagonal, i.e., $P_{12} = P_{21}^t = 0$, 
then there exist non-zero functions $f,g\in \lrd $, % $f\not\equiv 0, g\not\equiv 0$,
such that the support of $\W_{\bA}(f,g)$ is compact. 
\end{enumerate}
\end{theorem}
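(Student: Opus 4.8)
The plan is to reduce both statements to the known Benedicks-type uncertainty principle (*) for the short-time Fourier transform, exploiting the explicit form of $\mu(\bA)$ when $B$ is invertible. In that case $P=B^{-1}A$ is symmetric (an immediate consequence of the symplectic relation $AB^t=BA^t$), and, up to a fixed unimodular constant, the metaplectic operator is the quadratic Fourier transform
$$\mu(\bA)F(\lambda)=|\det B|^{-1/2}\,e^{\pi i\lambda\cdot DB^{-1}\lambda}\int_{\rdd}F(\mu)\,e^{\pi i\mu\cdot P\mu}\,e^{-2\pi i B^{-1}\lambda\cdot\mu}\,d\mu,\qquad F\in\lrdd.$$
Applying this to $F=f\otimes\bar g$ and setting $H:=(f\otimes\bar g)\cdot e^{\pi i(\cdot)\cdot P(\cdot)}\in\lrdd$, we obtain that $\W_{\bA}(f,g)$ is the composition of $\widehat H$ with the invertible linear map $\lambda\mapsto B^{-1}\lambda$, multiplied by a unimodular chirp. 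Since neither a unimodular factor nor an invertible linear change of variables affects whether the support is compact or of finite measure, it suffices to study $\supp\widehat H$. Writing $\mu=(\mu_1,\mu_2)\in\rd\times\rd$ and expanding the chirp with $P=\begin{psmallmatrix}P_{11}&P_{12}\\P_{21}&P_{22}\end{psmallmatrix}$ yields
$$H(\mu_1,\mu_2)=f_1(\mu_1)\,g_1(\mu_2)\,e^{2\pi i\mu_1\cdot P_{12}\mu_2},\qquad f_1:=e^{\pi i(\cdot)\cdot P_{11}(\cdot)}f,\quad g_1:=e^{\pi i(\cdot)\cdot P_{22}(\cdot)}\bar g,$$
where $f_1,g_1\in\lrd$ and $f_1\equiv 0\iff f\equiv 0$, $g_1\equiv 0\iff g\equiv 0$.

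For part (ii) the hypothesis $P_{12}=0$ makes $H=f_1\otimes g_1$ a tensor product, so $\widehat H=\widehat{f_1}\otimes\widehat{g_1}$ has support $\supp\widehat{f_1}\times\supp\widehat{g_1}$. Choosing non-zero band-limited functions $f_1,g_1\in\lrd$ (for instance with $\widehat{f_1}=\widehat{g_1}=\bone_{[0,1]^d}$) and undoing the two chirps produces non-zero $f,g\in\lrd$ for which $\supp\widehat H$, and hence $\supp\W_{\bA}(f,g)$, is compact.

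For part (i), set $r:=\rg P_{12}\geq 1$ and pick $R,S\in\mathrm{GL}(d,\bR)$ with $RP_{12}S=\begin{psmallmatrix}I_r&0\\0&0\end{psmallmatrix}$. Substituting $\mu_1\mapsto R^t\mu_1$, $\mu_2\mapsto S\mu_2$ replaces $f_1,g_1$ by transforms that preserve the zero/non-zero dichotomy and conjugates the Fourier transform by an invertible linear map, so we may assume $\mu_1\cdot P_{12}\mu_2=\mu_1'\cdot\mu_2'$ with $\mu_i=(\mu_i',\mu_i'')\in\bR^r\times\bR^{d-r}$. Computing $\widehat H$ by first transforming in the full first block and then in $\mu_2''$, one finds that for each fixed $(\xi_1'',\xi_2'')$ the slice $(\xi_1',\xi_2')\mapsto\widehat H(\xi_1',\xi_1'',\xi_2',\xi_2'')$ equals, up to reflection, conjugation and a unimodular factor, a short-time Fourier transform $\Vb_{\gamma(\xi_1'')}\psi(\xi_2'')$ on $\bR^r$, where $\gamma(\xi_1'')$ is the $\xi_1''$-slice of $\widehat{f_1}$ (Fourier transform in the whole first block) and $\psi(\xi_2'')$ is the $\xi_2''$-slice of the partial Fourier transform of $g_1$ in the last $d-r$ variables; by Fubini these lie in $L^2(\bR^r)$ for a.e.\ $(\xi_1'',\xi_2'')$. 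Now suppose $\supp\widehat H$ has finite measure. By Fubini, for a.e.\ $(\xi_1'',\xi_2'')$ the slice has support of finite measure in $\bR^{2r}$, so by (*) in dimension $r$, either $\gamma(\xi_1'')\equiv 0$ or $\psi(\xi_2'')\equiv 0$. Hence $E_f\times E_g$ is null in $\bR^{2(d-r)}$, where $E_f:=\{\xi_1'':\gamma(\xi_1'')\not\equiv 0\}$ and $E_g:=\{\xi_2'':\psi(\xi_2'')\not\equiv 0\}$, so $|E_f|=0$ or $|E_g|=0$; in the former case $\widehat{f_1}\equiv 0$, hence $f\equiv 0$, and in the latter $g_1\equiv 0$, hence $g\equiv 0$.

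The main obstacle is the bookkeeping in part (i): identifying the iterated partial Fourier transform of $H$ with a genuine short-time Fourier transform on $\bR^r$ — that is, tracking the reflections, complex conjugations and chirp factors precisely enough to legitimately invoke (*) — and then running the two Fubini arguments cleanly, first to pass to the $2r$-dimensional slices and then to deduce $|E_f|=0$ or $|E_g|=0$, while checking the mild measurability and $L^2$-integrability points these steps require.
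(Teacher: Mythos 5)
Your proposal is correct and follows essentially the same route as the paper: it unpacks the free-symplectic factorization $\bA=\bV_{DB^{-1}}\bD_B\bJ\bV_P$ into the explicit quadratic Fourier transform, discards the chirp and dilation factors, reduces the cross-term $e^{2\pi i\mu_1\cdot P_{12}\mu_2}$ to a rank-$r$ normal form (the paper uses an SVD, you use a general rank factorization), and then applies Fubini together with Benedicks's theorem for the \stft\ on the $2r$-dimensional slices — exactly the paper's ``partial \stft '' argument, done inline rather than as a separate lemma. Part (ii) is likewise identical to the paper's construction via band-limited tensor factors.
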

The matrices addressed in Theorem \ref{thm:tmintro} are called \emph{free symplectic matrices}.
% and their associated metaplectic operators are the so-called quadratic Fourier transforms. 
In the full treatment of Benedicks's \up\ we will remove the
condition  $\det B \neq 0$ with a trick that might be useful in
other contexts as well. Multiplying $\bA $ with an orthogonal symplectic matrix of the form 
$$
\bRo _{\tau I} = \begin{pmatrix}
   a I & b I \\ -b I & a I
\end{pmatrix}\in \Sp \cap \mathrm{O}(4d,\R),
$$
with $\tau = a+i b\in\T$,  we can guarantee that the resulting
matrix $\bA \bRo _\tau $ is free for  all but
finitely many $\tau \in \T$, and 
Theorem~\ref{thm:tmintro} is applicable, see Lemma~\ref{lemma:tau-expansion}.  

The proof of Theorem \ref{thm:tmintro} is based on several  key elements.

(i) We use a version of the Iwasawa decomposition of
symplectic matrices that yields a factorization of a symplectic matrix
into three matrices of the type that were discussed in the examples
(i) --- (iii). In view of the (counter-)examples above, the main problem is
the maximal compact subgroup of $\mathrm{Sp}(4d,\bR )$, see Section \ref{sec:structure}. A main role in
the investigation will be played by a recent singular value
decomposition of orthogonal symplectic matrices. In this respect  we
would like to  acknowledge inspiration from  the note
of Jaming who implicitly used matrix factorizations
to prove uncertainty principles for the fractional Fourier transform~\cite{Jaming2022}.

(ii) We introduce  a new notion of a partial \stft\ by 
freezing  some variables and taking the \stft\ only over a subset of
variables. 

Our final result will be  an  algorithm for deciding 
which metaplectic Wigner distributions $W_\bA (f,g)$ obey the
Benedicks-type \up . Although our main theorem contains a complete
description of all metaplectic \tfr s for which 
Benedicks's \up\ holds, the deeper geometric meaning of the  conditions
still remains unclear to us. 

In the last part, we will investigate quadratic \tfr s of the form $\W_{\cA} (f,{f}) = \mu(\bA)(f \otimes \overline{f})$. It may seem that nothing new is gained by
studying quadratic \tfr s. However, the example of  the so-called
Rihaczek distribution defined by
\begin{equation}
    R(f,g)(x,\omega) = e^{-2\pi i x \cdot \omega} f(x) \overline{\hat{g}(\omega)}
\end{equation}
 points again to some mystery. Again $R(f,g)$ can be written as a
 symplectic \tfr.    
 For the sesquilinear Rihaczek
distribution $R(f,g)$, clearly  there are $f,g\in \lrd $, such
that $R(f,g)$ has compact support, and  Benedicks-type \up\ cannot be
true.  However, if the quadratic
expression $R(f,f)$ is supported on a set of finite measure, then
$f=0$. This  is just  the classical \up\ of Benedicks and Amrein-Berthier.  Thus there is
a subtle difference between quadratic and sesquilinear \tfr s.
In
Theorem~\ref{thm:auto_uncertainty}, we will characterize all % matrices $\bA 
% $ for which the
quadratic symplectic \tfr\ $\W_{\bA}(f,f)$ that 
satisfy  Benedicks's \up. 

The paper is organized as follows:  in Section \ref{sec:SpMp} we
introduce the symplectic group and the metaplectic group. In Section
\ref{sec:UPFree}, we define  the {partial \stft} and  prove a first
Benedicks-type \up\ (Theorem \ref{thm:new_uncertainty_free}) that
implies   Theorem \ref{thm:tmintro} of the introduction. 
% Before a further extension, w 
We then address the structure of the symplectic
group in Section \ref{sec:structure}. In Section \ref{sec:reductions},
we prove  the main result (Theorem \ref{thm:new_uncertainty}) and we formulate an algorithm for deciding 
which symplectic matrices obey the  Benedicks-type \up . % \ which avoids the technically difficult queries of the reduction process.
 Section \ref{sec:auto} covers  \up s for quadratic \tfr s.
We illustrate the results by testing them on  the \stft, the ambiguity function and the ($\tau$-)Wigner distribution %and the Rihaczek distribution
in Section \ref{sec:examples}.
\vspace{14mm}

\section{The symplectic group and the metaplectic group}\label{sec:SpMp}
\subsection{The symplectic group}
The symplectic group 
$\Spp$ 
consists of the matrices $\A\in \R^{2d\times 2d}$ 
which preserve
the standard skew-symmetric bilinear form, i.e., 
\begin{equation}\label{eq:Spp_def}
\Spp = \left\lbrace \A\in \R^{2d\times 2d}: \A^t \J \A = \J \right\rbrace,
\end{equation}
where $\J$ denotes the standard skew-symmetric matrix defined in \eqref{eq:J_def}. The symplectic group is a subgroup of the special linear group, and only when $d=1$ do these two groups coincide.
The simplest symplectic matrices are of the following type:\footnote{We use the convention $A+iB\in\C^{d\times d}$ with $A = \re(A+iB)$, $B = \im(A+iB)$.}
\begin{equation}\label{eq:Sp_standard_Ex}
% U_P=\begin{pmatrix}
% I & P \\
% 0 & I
% \end{pmatrix},\quad 
\V_Q=\begin{pmatrix}
I & 0 \\
Q & I
\end{pmatrix},\quad
\D_L=\begin{pmatrix}
L & 0 \\
0 & L^{-T}
\end{pmatrix},\quad 
\Ro_{A+iB}= \begin{pmatrix}
    A & B \\ -B & A
\end{pmatrix}.
\end{equation}
The block structure of $\J$ provides four algebraic equations that characterize the blocks of symplectic matrices~\cite{Folland1989}. {With these equations}, one can easily verify that $\D_L$ are the only block-diagonal symplectic matrices, $\V_Q$ is symplectic if and only if $Q$ is symmetric, and $\Ro_{A+iB}$ is symplectic if and only if $U = A+iB$ is unitary on $\Cd$.

 The symplectic group is  a classical  Lie group. It possesses a 
 double cover, the so-called  \emph{metaplectic group} $\Mpp$ \cite{Knapp_LieBeyond, Hall2015}. 

\subsection{Metaplectic operators} 
Let $T_xf(t) = f(t-x)$ and $M_\omega f(t) = e^{2\pi i \omega \cdot
  t}$, $t,x,\omega \in \rd $, 
be the operators of translation and modulation. The 
symmetric time-frequency shifts 
$$
\rho(x,\omega) = T_{x/2} M_\omega T_{x/2}
$$ 
are a projective unitary representation of $\rdd$, as they satisfy
\begin{equation}\label{eq:rho_projective}
    \rho(\lambda+\gamma) = 
    e^{\pi i \lambda\cdot \J \gamma}
    % c(\lambda,\gamma) 
\rho(\lambda)\rho(\gamma),\qquad \lambda,\gamma\in\rdd.
\end{equation}
This relation extends to a unitary representation of the Heisenberg group \cite{Folland1989, Groechenig2001, Gosson2011}.
 From this perspective,
 the symplectic group is precisely the subgroup of $\mathrm{GL}(2d,\R)$ which preserve the phase factor $e^{\pi i \lambda \cdot \J \gamma}$ in \eqref{eq:rho_projective}.
 
 By results in representation theory of the Heisenberg group, specifically, the Stone-von-Neumann theorem \cite{Folland1989, Groechenig2001, Gosson2011}, 
 there exist unitary operators $\mu(\A)$  with
\begin{equation}\label{eq:Stone_von-Neumann} 
    \rho(\A\lambda) = \mu(\A)\rho(\lambda)\mu(\A)^{-1}, 
\end{equation}
that is, $\mu(\A)$ arises as an intertwining operator. These operators
are called \emph{metaplectic operators}. The operator is unique up to
a constant phase factor: it is obvious that any $\tau \mu(\A)$, $|\tau| =1$, 
also satisfies \eqref{eq:Stone_von-Neumann}. The mapping $\A \mapsto
\mu (\A)$ is a  projective, unitary  representation of the symplectic
group called the \emph{oscillator representation} \cite{Weil1964}. 

We already encountered the standard examples: the unitary dilations, the linear chirps, and the Fourier transform; 
\begin{equation}\label{eq:standard_metaplectic}
    \begin{split}
        \mu(\D_L) f(x) & = \abs{\det{L}}^{-1/2} f(L^{-1}x),\\
    \mu(\V_Q) f(x) & = e^{\pi i x^t  Qx}f(x), \\ \mu(\J) f(\omega)&= \hat{f}(\omega). 
    \end{split}
\end{equation}
 In this work, we will work with this particular choice of $\mu(V_Q),
 \mu(\D_L)$ and $\mu(\J)$, but the results are independent of the
 particular choice of the global phase.  In addition, one can show that
 with a suitable choice of the phase the metaplectic operators
 generate a group that is isomorphic to the  twofold cover of the symplectic group
 $\Spp$. This is the metaplectic group. The construction is rather subtle~\cite{Weil1964}, but is
 not needed in the sequel. In the following we identify the abstract
 twofold cover $\Mpp$ with the  group of operators generated by the
 intertwining operators $\mu (\A )$ \emph{with the correct phase factors}. 

% Once can show that  In particular, by selecting representatives $\mu(\A_0)$ for a set of generators $\A_0\in\Spp$, we can decompose any intertwining operator $\mu(\A)$, $\A\in\Spp$ up to a global phase. 

% The name \emph{metaplectic} stems from the Weil representation \cite{Weil1964}, which states that the double cover of $\Spp$ is isomorphic to a group of metaplectic operators, i.e., we can identify the abstract cover $\Mpp$ with a group of operators.
With this designation, the projection 
\begin{equation*}
    \pi^{\mathrm{Mp}}:\Mpp\to\Spp
\end{equation*}
is a surjective group homomorphism, with kernel
$\mathrm{ker}(\pi^{\mathrm{Mp}}) =\left\lbrace \mathrm{id},
  -\mathrm{id} \right\rbrace$, and we often say that  the metaplectic
operator $\mu (\A ) \in \Mpp$ projects to the matrix $\A \in \Spp $.  
% The fact that many well-known unitary operators are, in fact, metaplectic operators, allows for a unified understanding of their interactions and their action on $\ltd$. 

\subsection{Free symplectic matrices and quadratic Fourier transforms}
% Besides the examples above, a 
An important subset of $\Spp$  is the set
of \emph{free symplectic matrices}, i.e., the symplectic matrices with
an invertible upper-right $d\times d$ block. %Free symplectic matrices
These possess a factorization into the elementary symplectic matrices~\eqref{eq:Sp_standard_Ex} as follows.     
%The following factorization result provides a simplified case to examine for the uncertainty principles.
\begin{proposition}\label{prop:free_Sp_decomposition} 
    Let $\A\in \Spp$ be a \emph{free symplectic matrix}, i.e., $\A$  has a block structure
    \begin{equation}
        \A = \begin{pmatrix}
            A & B \\ C & D
        \end{pmatrix}
    \end{equation}
    with $A, B,C,D \in \R^{d\times d}$ and invertible
    $B$. %$\in\mathrm{GL}(d,\R)$.
    Then 
    $$
    \A = \V_{D B^{-1}} \D_{B} \J  \V_{B^{-1} A}.
    $$
\end{proposition}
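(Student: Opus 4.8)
The plan is to prove the identity by multiplying out the right-hand side block by block and comparing with $\A$, using only two of the four defining relations for a symplectic matrix. First I would record those relations. Writing out $\A^t\J\A = \J$ and $\A\J\A^t = \J$ for $\A = \begin{psmallmatrix} A & B \\ C & D\end{psmallmatrix}$ shows that $A^tC$, $B^tD$, $AB^t$, $CD^t$ are symmetric and that $A^tD - C^tB = I$; for the present proof only $B^tD = D^tB$ and $A^tD - C^tB = I$ will be needed.

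Next I would compute the product from the inside out. Since $\D_B\J = \begin{psmallmatrix} 0 & B \\ -B^{-t} & 0\end{psmallmatrix}$, one gets
$$
\D_B\J\,\V_{B^{-1}A} \;=\; \begin{pmatrix} A & B \\ -B^{-t} & 0\end{pmatrix},
$$
and left-multiplication by the lower unipotent matrix $\V_{DB^{-1}}$ leaves the first block row unchanged while turning the second into $(DB^{-1}A - B^{-t},\,D)$. Hence the whole product equals $\begin{psmallmatrix} A & B \\ DB^{-1}A - B^{-t} & D\end{psmallmatrix}$, and it only remains to identify the lower-left block with $C$.

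This last identification is the crux, but it is short: from $A^tD - C^tB = I$ and the invertibility of $B$ we get $C^t = A^tDB^{-1} - B^{-1}$, hence $C = B^{-t}D^tA - B^{-t}$; and the relation $B^tD = D^tB$ rearranges to $B^{-t}D^t = DB^{-1}$, so $C = DB^{-1}A - B^{-t}$, exactly the block computed above. I do not anticipate a genuine obstacle here; the only thing to watch is to invoke precisely those block relations that do not presuppose invertibility of $A$, $C$, or $D$, since only $B$ is assumed invertible. As a cross-check — and an alternative to the last step — note that each factor is already symplectic: $\D_B$ and $\J$ by inspection, and $\V_{DB^{-1}}$, $\V_{B^{-1}A}$ because $DB^{-1}$ and $B^{-1}A$ are symmetric (again from $B^tD = D^tB$ and $AB^t = BA^t$). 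Thus the product is a free symplectic matrix sharing the $A$-, $B$-, and $D$-blocks of $\A$, and then $A^tD - C^tB = I$ already forces the $C$-block, so the product must equal $\A$.
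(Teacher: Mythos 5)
Your computation is correct and is exactly the ``short computation'' that the paper delegates to de Gosson [Prop.~2.39]: multiply out the factors blockwise and identify the lower-left block via the relations $B^tD=D^tB$ and $A^tD-C^tB=I$. Nothing further is needed.
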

The proof is a short computation and can be found in \cite[Prop.~2.39.]{Gosson2006}.

As a consequence, metaplectic operators
corresponding to a free symplectic matrix  can  then be factored into the
elementary operators (Fourier transform, linear coordinate change, and
multiplication by a chirp) up to a phase factor. The composition of
these elementary operators  yields
integral operators with a Gaussian
kernel~\cite{terMorscheOonincx2002O}, which are sometimes   called
\emph{quadratic Fourier transforms} \cite{ Gosson2011}. Although we do
not need this fact, we mention that every symplectic matrix is the
product of two free symplectic matrices. Thus the matrices $\V_{Q},
\D_{L}, $ and $ \J$ generate the symplectic group. 

%not neThen, 
% with the exception of the lower-triangular symplectic matrices,  all  metaplectic operators have an integral representation
%\cite{terMorscheOonincx2002O}. \marginnote{\is{I don't know what's the problem, but I'm not happy with the paragraph}}
%This holds in particular for the free symplectic matrices.
% The corresponding metaplectic operators of free symplectic matrices are called \emph{quadratic Fourier transforms} \cite{ Gosson2011}.

% \begin{remark}
\textbf{Notation.}    We distinguish matrices in $\Spp$ and $\Sp$ by
the following notation where the basic matrices on $\R ^{4d} $ are in
bold face: 
\begin{align}
\mathcal{A}, \quad \V_{Q}, \D_{L},\Ro_{U}  &\in \Spp,\ Q,L\in\R^{d\times d}, U\in \C^{d\times d}, \\
 \bm{\mathcal{A}},\quad   \bV_{Q}, \bD_{L},\bRo_{U}  &\in \Sp,\ Q,L\in\R^{2d\times 2d}, U\in \C^{2d\times 2d}.
\end{align}
The blocks of a symplectic matrix are in general denoted with
$A,B,C,D$. For special matrices we use  the following convention: $L$
is used for invertible matrices, $W,V$ denote 
orthogonal matrices, $P,Q$ denote symmetric matrices, $U$ is always a 
unitary matrix,  and $\Sigma$ stands for a unitary diagonal
matrix. Note that only $U$ and $\Sigma$ are \emph{complex-valued}. For
$A, B\in\C^{d\times d}$ we denote with $\diag(A, B) = \begin{psmallmatrix}
    A & 0 \\ 0& B
\end{psmallmatrix}$  block-diagonal matrices with $d\times d$ blocks.
%\end{remark}

\begin{remark}\label{rem:embedding}
    One can easily verify that $\Spp\times \Spp$ can be embedded in
    $\Sp$ by embedding the generators $\D _L, \V_Q$ and $\Ro _U$ % , cf. Section \ref{sec:structure},
    in the following way:
\begin{equation}
    \begin{split}
        (\D_{L_1},\D_{L_2}) &\mapsto \bD_{\diag(L_1,L_2)},\quad L_1, L_2 \in\mathrm{GL}(d,\R),\\
        (\V_{Q_1},\V_{Q_2}) &\mapsto \bV_{\diag(Q_1,Q_2)}, \quad Q_1=Q_1^t, Q_2=Q_2^t \in\R^{d\times d},\\
        (\Ro_{U_1},\Ro_{U_2}) &\mapsto \bRo_{\diag(U_1,U_2)}, \quad U_1, U_2\in\mathrm{U}(d,\C).
    \end{split}
\end{equation}
    On the operator level, the action on the elementary tensors is given by
  \begin{align}  
        \mu(\bD_{\diag(L_1,L_2)}) (f\otimes g) &= \mu(\D_{L_1})
     f\otimes   \mu(\D_{L_2}) g, \notag \\ \label{ch1} 
        %\quad L_1, L_2 \in\mathrm{GL}(d,\R),\\
        \mu(\bV_{\diag(Q_1,Q_2)})(f\otimes g) & = \mu(\V_{Q_1}) f\otimes \mu(\V_{Q_2}) g,\\
        %\quad Q_1=Q_1^t, Q_2=Q_2^t \in\R^{d\times d},\\
        \mu(\bRo_{\diag(U_1,U_2)})(f\otimes g) & =  \mu(\Ro_{U_1})
    f\otimes   \mu(\Ro_{U_2}) g. \notag 
\end{align}
%     \begin{equation} \label{ch1}
%     \begin{split}
%         \mu(\bD_{\diag(L_1,L_2)}) (f\otimes g) &= \mu(\D_{L_1}) f\otimes \mu(\D_{L_2}) g, \\ 
%         %\quad L_1, L_2 \in\mathrm{GL}(d,\R),\\
%         \mu(\bV_{\diag(Q_1,Q_2)})(f\otimes g) & = \mu(\V_{Q_1}) f\otimes \mu(\V_{Q_2}) g, \\
%         %\quad Q_1=Q_1^t, Q_2=Q_2^t \in\R^{d\times d},\\
%         \mu(\bRo_{\diag(U_1,U_2)})(f\otimes g) & =  \mu(\Ro_{U_1}) f\otimes \mu(\Ro_{U_2}) g.
%     \end{split}
% \end{equation}
\end{remark}
We will frequently use  the following formulas for the elementary
symplectic matrices  $\V_Q $,  $\D_L $, $\Ro_{U}$.
\begin{lemma}\label{rem:Sp_rechenregel}
Let  $Q, Q_1, Q_2\in\R^{d\times d}$ be symmetric, $L,L_1,L_2\in
\mathrm{GL}(d,\R)$, $U,U_1,U_2 \in \mathrm{U}(d,\C)$. Then: % $U =
\begin{enumerate}[(i)]
        \item If $U$ is real-valued, i.e., $U\in \textrm{O}(d,\R)$,
            then  $\Ro_{U} = \D_U$. % if and only if $U$ is real-valued, i.e., orthogonal.
        \item Interaction with the \ft : % For all $L\in \mathrm{GL}(d,\R)$ holds
          $\J \D_L = \D_{L^{-t}}\J$. % (recall the interaction of the \ft\ with dilations).
        \item For $W\in \mathrm{O}(d,\R)$ we have
          \begin{equation}
            \D_W \V_Q \D_{W^t} = \V _{WQW^t}\, .
          \end{equation}
% For all $L_1,L_2\in \mathrm{GL}(d,\R)$ and $Q=Q^t\in\R^{d\times d}$ holds
%         \begin{equation}
%             \D_{L_1}\V_Q \D_{L_2} = \D_{L_1L_2}\V_{L_2^t Q L_2}  = \V_{L_1^{-t}QL_1^{-1}}\D_{L_1L_2}.
%         \end{equation}
% CHECK what is used. 
        % By setting either of $L_1,L_2,Q$ to $I$, one can derive simpler relations. 
        \item Homomorphisms: % For all $Q_1=Q_1^t, Q_2=Q_2^t\in\R^{d\times d}$, $L_1,L_2\in \mathrm{GL}(d,\R)$, and $U_1,U_2\in \mathrm{U}(d,\R)$ holds
                  \begin{equation}
            \V_{Q_1}\V_{Q_2} = \V_{Q_1+Q_2},\quad 
            \D_{L_1}\D_{L_2} = \D_{L_1L_2},\quad 
            \Ro_{U_1}\Ro_{U_2} = \Ro_{U_1U_2}.
        \end{equation}
    \end{enumerate}
  \end{lemma}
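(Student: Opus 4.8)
The plan is to verify the four identities by direct block-matrix computation from the explicit forms in \eqref{eq:Sp_standard_Ex}, invoking only the elementary linear-algebra facts recalled immediately before the lemma: $\V_Q\in\Spp$ iff $Q=Q^t$, the $\D_L$ are exactly the block-diagonal symplectic matrices, and $\Ro_{A+iB}\in\Spp$ iff $A+iB\in\mathrm{U}(d,\C)$. It is cleanest to dispatch (iv) first, because the remaining items are then either pure matrix products or reduce to (iv). For (iv): $\V_{Q_1}\V_{Q_2}$ is a product of two lower block-unitriangular matrices, so the off-diagonal block simply adds and $\V_{Q_1}\V_{Q_2}=\V_{Q_1+Q_2}$ (with $Q_1+Q_2$ again symmetric); $\D_{L_1}\D_{L_2}$ multiplies the diagonal blocks, and since $L\mapsto L^{-t}$ is a group automorphism of $\mathrm{GL}(d,\R)$ one has $L_1^{-t}L_2^{-t}=(L_1L_2)^{-t}$, hence $\D_{L_1}\D_{L_2}=\D_{L_1L_2}$; and writing $U_j=A_j+iB_j$ and multiplying the two block matrices in $\Ro_{U_1}\Ro_{U_2}$, one recognizes the diagonal block $A_1A_2-B_1B_2=\re(U_1U_2)$ and the upper-right block $A_1B_2+B_1A_2=\im(U_1U_2)$, so $\Ro_{U_1}\Ro_{U_2}=\Ro_{U_1U_2}$ --- i.e.\ $\Ro_{(\cdot)}$ is just the real-linear realization of complex matrix multiplication.

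For (i), a real-valued unitary $U$ has $\im U=0$, so $\Ro_U=\diag(U,U)$; and $U\in\mathrm{O}(d,\R)$ means $U^{-t}=U$, whence $\D_U=\diag(U,U^{-t})=\diag(U,U)=\Ro_U$. For (ii), multiplying out shows that both $\J\D_L$ and $\D_{L^{-t}}\J$ equal $\begin{psmallmatrix}0 & L^{-t}\\ -L & 0\end{psmallmatrix}$, using $(L^{-t})^{-t}=L$; this is the matrix-level form of the Fourier--dilation commutation rule. For (iii), one option is to compute directly: for orthogonal $W$ one has $\D_W=\diag(W,W)$ and $\D_{W^t}=\diag(W^t,W^t)$, so $\D_W\V_Q\D_{W^t}=\V_{WQW^t}$. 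Alternatively, one first derives the general conjugation formula $\D_L\V_Q\D_L^{-1}=\V_{L^{-t}QL^{-1}}$ (using $\D_L^{-1}=\D_{L^{-1}}$ from (iv)) and then specializes to $L=W$ with $W^{-t}=W$ and $W^{-1}=W^t$. Either way $WQW^t$ is symmetric, so the right-hand side is a legitimate $\V$-matrix.

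I do not expect a genuine obstacle here: every step is a finite block computation. The only places calling for a little care are the bookkeeping of transpose--inverse operations --- $(L_1L_2)^{-t}=L_1^{-t}L_2^{-t}$ and $(L^{-t})^{-t}=L$ --- and checking, each time a product is identified with some $\V_P$, $\D_M$, or $\Ro_V$, that the new parameter still satisfies the defining constraint ($P=P^t$, $M$ invertible, $V$ unitary), so that the asserted identity indeed lives inside $\Spp$. All of these verifications are immediate.
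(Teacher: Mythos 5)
Your proof is correct and takes the same route as the paper, which simply asserts that all four identities follow immediately from the block definitions in \eqref{eq:Sp_standard_Ex}; you have merely written out the direct block-matrix computations that the paper leaves implicit. All the transpose--inverse bookkeeping ($(L_1L_2)^{-t}=L_1^{-t}L_2^{-t}$, $(L^{-t})^{-t}=L$) and the identification $\Ro_{U_1}\Ro_{U_2}=\Ro_{U_1U_2}$ via real/imaginary parts check out.
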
                                % A+iB\in \mathrm{U}(d,\C)$.Shearing

\begin{proof}
These properties follow immediately from the definition of the
elementary symplectic matrices~\eqref{eq:Sp_standard_Ex}.  
\end{proof}
                                % % matrices $\V_Q$, dilation matrices
                                % $\D_L$ and orthogonal matrices
                                % $\Ro_U$ satisfy the following
                                % relations.

% \begin{enumerate}[(i)]
%         \item It holds $\D_L = \Ro_{L}$ if and only if $L$ is orthogonal. Vice versa, $\Ro_{U} = \D_U$ if and only if $U$ is real-valued, i.e., orthogonal.
%         \item For all $L\in \mathrm{GL}(d,\R)$ holds $\J \D_L = \D_{L^{-t}}\J$ (recall the interaction of the \ft\ with dilations).
%         \item For all $L_1,L_2\in \mathrm{GL}(d,\R)$ and $Q=Q^t\in\R^{d\times d}$ holds
%         \begin{equation}
%             \D_{L_1}\V_Q \D_{L_2} = \D_{L_1L_2}\V_{L_2^t Q L_2}  = \V_{L_1^{-t}QL_1^{-1}}\D_{L_1L_2}.
%         \end{equation}
%         By setting either of $L_1,L_2,Q$ to $I$, one can derive simpler relations. 
%         \item For all $Q_1=Q_1^t, Q_2=Q_2^t\in\R^{d\times d}$, $L_1,L_2\in \mathrm{GL}(d,\R)$, and $U_1,U_2\in \mathrm{U}(d,\R)$ holds 
%         \begin{equation}
%             \V_{Q_1}\V_{Q_2} = \V_{Q_1+Q_2},\quad 
%             \D_{L_1}\D_{L_2} = \D_{L_1L_2},\quad 
%             \Ro_{U_1}\Ro_{U_2} = \Ro_{U_1U_2}.
%         \end{equation}
%     \end{enumerate}
% \end{remark}

\section{Uncertainty principles for free symplectic
  matrices}\label{sec:UPFree} In this section, we will prove a
Benedicks-type uncertainty principle for the $\bA$-Wigner distribution of
a free
symplectic matrix $\bA\in\Sp$. The general case will be reduced to
this case in Section~5.
%We reduce an uncertainty principle for arbitrary $\bA$-Wigner distributions to this special case.

Given a function $f\in\ltd$ and writing $x=(x_1,x_2)$ for $x_1\in\R^k$ and $x_2\in\R^{d-k}$ 
%$x_2\in\R^{d-k}$
for some $1\leq k< d$, we denote with $f_{x_2}$ the restriction % \footnote{If $k=d$, then we interpret $f_{x_2} $ as $f$.}
\begin{equation}
    f_{x_2}:\R^k\to\C,\quad x_1\mapsto f(x_1,x_2).
\end{equation}
By Fubini's theorem, for almost all $x_2\in \R^{d-k}$, the function
$f_{x_2}$ is square-integrable. This fact  allows us to define the partial \stft\ in $k$ variables almost everywhere.

\begin{definition} \label{def:partial_stft}
    Let  $f,g\in\ltd,\, g\not\equiv 0$, and let $k\in\{1,\dots, d\}$. We define the partial short-time Fourier transform in the first $k$ variables of the function $f$ with respect to $g$ as the % almost everywhere defined
    function
    
    \begin{equation}\label{eq:def_partial_STFT}
        \begin{split}
            \Vb^k_{g}f(x_1,x_2, \omega_1,\omega_2) & = \int_{\R^k} f(t,x_2) \overline{g(t-x_1, -\omega_2)} e^{-2\pi i t\cdot \omega_1 }\, dt \\
            & = \langle 
            % f(\cdot, x_2), 
            f_{x_2},
            M_{\omega_1} T_{x_1} 
            % g(\cdot,\omega_2) 
            g_{-\omega_2}
            \rangle \\ 
            & = \Vb_{
            % g(\cdot, \omega_2)
            g_{-\omega_2}
            } 
            % f(\cdot, x_2)
            f_{x_2}
            (x_1,\omega_1),
            % ,\quad 
            % (x_1,x_2), (\omega_1,\omega_2)\in \R^k\times \R^{d-k}
            % x_1,\omega_1\in\R^k, x_2,\omega_2\in\R^{d-k}
        \end{split}
    \end{equation}
    for $(x_1,x_2,\omega_1,\omega_2) \in \R^{k}\times \R^{d-k} \times \R^{k}\times \R^{d-k}$.

    % This is a special case  of a metaplectic Wigner distribution, i.e., there exists a matrix $\bA_k\in \Sp$ such that $ \Vb^k_{g}f = \W_{\bA_k}(f,g)$.
\end{definition}
If $k=d$, $\Vb^d_g = \Vb_g$ is the classical \stft. Otherwise, the integral in \eqref{eq:def_partial_STFT} is well-defined almost everywhere because $f_{x_2}$ and $g_{\omega_2}$ are square-integrable for almost all $x_2,\omega_2\in \R^{d-k}$.
For such $x_2,\omega _2$,  $\Vb _g^kf$  is continuous in $x_1,\omega_1\in\R^k$.

% Recall the notation introduced in \eqref{eq:Sp_standard_Ex}: $\D_L = \begin{psmallmatrix}
%     L & 0 \\ 0 & L^{-t}
% \end{psmallmatrix}$ and $\Ro_{A+iB} = \begin{psmallmatrix}
%     A& B \\ -B & A
% \end{psmallmatrix}$,
% where $L\in\mathrm{GL}(d,\R)$ and $U = A+iB \in\mathrm{U}(d,\C)$.

Like the \stft, the partial \stft\ can be interpreted as a metaplectic
\tf\ representation. To see this, we note that 
$(x_1,x_2,t,\omega _2) \mapsto (t,x_2,t-x_1,-\omega _2)$ is a linear
mapping $L_k^{-1} \in
\mathrm{GL}(2d,\R)$ on $\rdd $, and the
partial \ft\ is the metaplectic operator that 
% Consider the partial Fourier transform  which 
projects to the matrix 
\begin{equation}
% \J_k = \begin{pNiceMatrix}[margin,columns-width=auto,nullify-dots]
%   \Block{4-4}<\LARGE>{I_d} & & & & \Block{4-4}<\LARGE>{0_d} & & &   \\
%   & & & & & & & \\
%   & & & & & & & \\
%   & & & & & & & \\
%   % & & & & & & & \\
%   \Block{4-4}<\LARGE>{0} & & & & 0 & 0 & I_k & 0 \\
%   & & & & 0 & I_{d-k} & 0 & 0\\
%   & & & & -I_{k} & 0 & 0 & 0\\
%   & & & & 0 & 0 & 0 & I_{d-k}
% % \CodeAfter 
% %   \UnderBrace{1-1}{3-3}{n}
% %   \UnderBrace[yshift=2mm]{7-1}{7-6}{N}
% \end{pNiceMatrix},
\bJ_k = \bRo_{\diag(I_k, I_{d-k}, i I_k, I_{d-k}}) \in\Sp. 
% = \begin{pmatrix}
%     I_k & 0 & 0 & 0 & 0 & 0 & 0 & 0 \\
%     0 & I_{d-k} & 0 & 0 & 0 & 0 & 0 & 0 \\
%     0 & 0 & 0 & 0 & 0 & 0 & I_{d-k} & 0 \\
%     0 & 0 & 0 & I_{d-k} & 0 & 0 & 0 & 0 \\
%     0 & 0 & 0 & 0 & I_k & 0 & 0 & 0 \\
%     0 & 0 & 0 & 0 & 0 & I_{d-k} & 0 & 0 \\
%     0 & 0 & -I_{k} & 0 & 0 & 0 & 0 & 0 \\
%     0 & 0 & 0 & 0 & 0 & 0 & 0 & I_{d-k}
% \end{pmatrix}.
\end{equation} 
% By choosing an appropriate dilation matrix $L_k\in \mathrm{GL}(2d,\R)$, we see that
Therefore the partial \stft\ $V^k_g f$ is the metaplectic \tf\
distribution $\W_{\bJ_k \bD_{L_k}}(f,g)$.  
% $\W_{\bA_k}(f,g)$ with $\bA_k = \bJ_k \bD_{L_k}$.
% Let $L_{st^*k}$ be the dilation matrix 
% \begin{equation}
%     L_{st^*k} = \begin{pmatrix}
%         0 & 0 & I_{k}& 0 \\
%         0 & I_{d-k} & 0 & 0 \\
%         -I_{k} & 0 & I_k & 0 \\
%         0 & 0 & 0 & I_{d-k}
%     \end{pmatrix}^{-1} =  \begin{pmatrix}
%         I_k & 0 & -I_{k}& 0 \\
%         0 & I_{d-k} & 0 & 0 \\
%         I_{k} & 0 & 0 & 0 \\
%         0 & 0 & 0 & I_{d-k}
%     \end{pmatrix}.
% \end{equation}
% Then the partial \stft\ is the metaplectic distribution associated to the matrix $\J_k \bD_{L_{st^*k}}$.
% \end{remarks}

The following lemma is a Benedicks-type \up\  for the partial \stft ,
and is  a first step towards a general result. 

\begin{lemma}\label{lem:supp_partial_STFT}     Let $f,g\in\ltd$ and let $k\in\{1,\dots, d\}$.
If $\Vb^k_g f$ is supported on a set of finite measure, then $f\equiv 0$ or $g\equiv 0$.
    % $$
\end{lemma}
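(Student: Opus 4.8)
The plan is to reduce the statement to the classical \stft\ uncertainty principle~(*) by slicing in the ``frozen'' variables $x_2,\omega_2$. The starting point is the identity recorded in Definition~\ref{def:partial_stft}, namely $\Vb^k_g f(x_1,x_2,\omega_1,\omega_2) = \Vb_{g_{-\omega_2}} f_{x_2}(x_1,\omega_1)$, which says that for each fixed pair $(x_2,\omega_2)\in\R^{d-k}\times\R^{d-k}$ the function $(x_1,\omega_1)\mapsto \Vb^k_g f(x_1,x_2,\omega_1,\omega_2)$ is an honest \stft\ on $\R^{k}$ of $f_{x_2}$ with window $g_{-\omega_2}$. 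For $k=d$ there is nothing to do, since the statement is literally (*); so I would assume $1\le k<d$.

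Next I would fix a measurable set $E\subseteq\R^{2d}$ with $|E|<\infty$ outside of which $\Vb^k_g f$ vanishes almost everywhere, and slice $E$ over the coordinates $(x_2,\omega_2)$. Writing $E_{x_2,\omega_2}=\{(x_1,\omega_1)\in\R^k\times\R^k:(x_1,x_2,\omega_1,\omega_2)\in E\}$, Tonelli's theorem gives $|E|=\int_{\R^{d-k}\times\R^{d-k}}|E_{x_2,\omega_2}|\,dx_2\,d\omega_2$, so $|E_{x_2,\omega_2}|<\infty$ for almost every $(x_2,\omega_2)$. A second application of Fubini shows that for almost every $(x_2,\omega_2)$ one simultaneously has $f_{x_2},g_{-\omega_2}\in L^2(\R^k)$ (because $\|f\|_2^2=\int\|f_{x_2}\|_{L^2(\R^k)}^2\,dx_2$ and likewise for $g$) and the slice $\Vb_{g_{-\omega_2}}f_{x_2}$ vanishes almost everywhere off $E_{x_2,\omega_2}$ (because $\mathbf 1_{\R^{2d}\setminus E}(x_1,x_2,\omega_1,\omega_2)=\mathbf 1_{\R^{2k}\setminus E_{x_2,\omega_2}}(x_1,\omega_1)$). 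Applying (*) in dimension $k$ to each such slice, I conclude that for almost every $(x_2,\omega_2)$, either $f_{x_2}\equiv0$ or $g_{-\omega_2}\equiv0$ in $L^2(\R^k)$.

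To finish, I would set $A=\{x_2\in\R^{d-k}:f_{x_2}\not\equiv0\}$ and $B=\{\omega_2\in\R^{d-k}:g_{-\omega_2}\not\equiv0\}$, which are measurable since $x_2\mapsto\|f_{x_2}\|_{L^2(\R^k)}$ and $\omega_2\mapsto\|g_{-\omega_2}\|_{L^2(\R^k)}$ are measurable. The previous step says exactly that $A\times B$ is a Lebesgue-null subset of $\R^{2(d-k)}$, hence $|A|\,|B|=0$, so $|A|=0$ or $|B|=0$. In the first case $f_{x_2}\equiv0$ for a.e.\ $x_2$, whence $f\equiv0$ by Fubini; in the second case $g_{-\omega_2}\equiv0$ for a.e.\ $\omega_2$, and since $\omega_2\mapsto-\omega_2$ preserves null sets, $g\equiv0$.

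The argument is essentially measure-theoretic bookkeeping, and I do not expect a genuinely hard step: the content is entirely carried by the known \stft\ result~(*). The two points that need care are the correct reading of ``supported on a set of finite measure'' as an almost-everywhere vanishing condition, so that Fubini legitimately transfers it to the slices, and the elementary but crucial observation that a nonzero $L^2$ function has nonzero restrictions $f_{x_2}$ on a set of $x_2$ of positive measure --- this is precisely what upgrades ``not both zero for a.e.\ $(x_2,\omega_2)$'' into ``$|A|\,|B|=0$''.
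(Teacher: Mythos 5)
Your proof is correct and follows essentially the same route as the paper's: slice the support over the frozen variables $(x_2,\omega_2)$ via Fubini--Tonelli, note that almost every slice is a genuine \stft\ on $\R^k$ supported on a set of finite measure, and apply the classical result (*) to each slice. The only (harmless) difference is the concluding step: the paper deduces $|\supp \Vb^k_g f|=0$ and then invokes unitarity of the metaplectic operator $\mu(\bA_k)$ to get $f\otimes\overline{g}\equiv 0$, whereas you argue directly that $A\times B$ is null, hence $|A|\,|B|=0$ --- both are valid.
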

\begin{proof}
 If $k = d$, then this is Benedicks's uncertainty principle for the
 \stft \cite{Janssen1998_supp, Jaming1998,
   GroechenigZimmermann2001,Wilczok2000}. Therefore, we may assume
 that  $1\leq k<d$.
 
 By Fubini's theorem, 
there exists a set $ N_1 \subseteq\rdd$ of measure zero such that for all $(x_1, x_2,\omega_1,\omega_2)\in \rdd \setminus N_1$, the functions $f_{x_2}, g_{\omega_2}$
are square-integrable and the representation \eqref{eq:def_partial_STFT} holds pointwise on $\rdd\setminus N_1$.

Let $\mathcal{S}\subseteq \rdd\setminus N_1$ be the support of $\Vb^k_g f$, where we have already omitted the null set of restrictions $f_{x_2},\, g_{\omega_2}$, $x_2,\omega_2\in\R^{d-k}$ that are meaningless or not in $L^2(\R^k)$. By 
%Cavalieri's principle,
Fubini's theorem, 
there exists a further set $N_2\subseteq \R^{d-k} \times \R^{d-k}$ of measure zero 
such that for fixed $(x_2,\omega_2)\in (\R^{d-k} \times \R^{d-k})\setminus N_2$ every section
$$
\mathcal{S}_{x_2,\omega_2} \coloneqq \left\lbrace (x_1,\omega_1) \in \R^{k}\times\R^{k} :\  (x_1, x_2,\omega_1,\omega_2) \in\mathcal{S}
\right\rbrace
$$
\begin{enumerate}[(i)]
    \item is measurable and 
has finite measure in $\R^k\times \R^k$,
\item the measure of $\mathcal{S}$ is given by
\begin{align}\label{eq:S_Cavalieri}
    |\mathcal{S}| = \int_{(\R^{d-k}\times \R^{d-k}) \setminus N_2}
  |\mathcal{S}_{x_2,\omega_2}| \, dx_2 d\omega_2,
\end{align}
\item $\mathcal{S}_{x_2,\omega_2} = \supp \Vb_{g_{-\omega_2}} f_{x_2}$.
\end{enumerate} 
Since $\mathcal{S}_{x_2,\omega_2}$ has finite measure in $\R^k\times \R^k$, Benedicks's uncertainty principle for the \stft \cite{Janssen1998_supp, Jaming1998, GroechenigZimmermann2001, Wilczok2000} implies that $f_{ x_2} \equiv 0 $ or $g_{ \omega_2} \equiv 0 $, and as a consequence,  $|\mathcal{S}_{x_2,\omega_2}| =0$.

   Going back to \eqref{eq:S_Cavalieri}, this implies that $|\mathcal{S}| =0$, hence $ \Vb^k_g f \equiv 0$. 
   As already noted in Definition \ref{def:partial_stft}, $\Vb^k_g f = \mu(\bA _k) (f\otimes \overline{g})$ for a suitable symplectic matrix $\bA _k\in\Sp$.
   Since $\mu(\bA _k)$ is unitary, it follows that $f\otimes \overline{g} \equiv 0$, or equivalently, $f\equiv 0$ or $g \equiv 0$.
   This proves the claim.
\end{proof}

As noted in the introduction, the operators $\mu (\D _L)$ and $\mu (\V
_Q)$  are not relevant for the validity of a Benedicks-type \up
. Since we will use this fact frequently, we state it formally. % The following lemma formalizes the observations of the introduction
% and will be used several times to remove  inessential factors of a
% product of symplectic matrices. 

\begin{lemma}\label{lem:reduce_ortho}
    Let $\bA = \bV_Q \bD_L \bB\in\Sp$ and %  be the pre-Iwasawa decomposition \eqref{eq:iwasawa} of 
    % $\bA \in\Sp$.
   $f,g\in \ltd$ non-zero. Then %, $f\not\equiv 0$, $g\not\equiv 0$. Then  
    \begin{equation}\label{eq:red_ortho}
        |\supp\, \W_{\bA}(f,g)| <\infty
    \end{equation}
    if and only if 
%    there exist $f,g\in \ltd$, $f\not\equiv 0$, $g\not\equiv 0$, with 
    $$
    |\supp\, \W_{\bB}(f,g)| <\infty.
    $$
\end{lemma}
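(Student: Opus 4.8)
The plan is to show that the operators $\mu(\bV_Q)$ and $\mu(\bD_L)$ change the support of a function on $\R^{2d}$ only in ways that preserve finiteness of Lebesgue measure, so that composing with them on the left of $\mu(\bB)$ cannot create or destroy the Benedicks-type property. Write $\W_{\bA}(f,g) = \mu(\bA)(f\otimes\overline g) = \mu(\bV_Q)\,\mu(\bD_L)\,\mu(\bB)(f\otimes\overline g) = \mu(\bV_Q)\,\mu(\bD_L)\,\W_{\bB}(f,g)$, using that $\mu$ is (up to a phase, which is irrelevant for supports) a representation. Since the statement is symmetric in $\bA$ and $\bB$ once we know both implications, it suffices to prove one direction together with its converse, i.e.\ that $F\mapsto \mu(\bV_Q)\mu(\bD_L)F$ maps functions of finite-measure support to functions of finite-measure support and is invertible on $\lrdd$ with inverse of the same type.

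First I would record from \eqref{eq:standard_metaplectic} (applied in dimension $2d$) the pointwise formulas $\mu(\bD_L)F(\lambda) = |\det L|^{-1/2}F(L^{-1}\lambda)$ and $\mu(\bV_Q)F(\lambda) = e^{\pi i \lambda^t Q\lambda}F(\lambda)$ for $F\in\lrdd$, $\lambda\in\R^{2d}$. From the chirp formula, multiplication by the unimodular factor $e^{\pi i\lambda^t Q\lambda}$ leaves the support unchanged: $\supp\,\mu(\bV_Q)F = \supp F$ (as sets, hence also in measure). From the dilation formula, $\supp\,\mu(\bD_L)F = L(\supp F)$, and since $L\in\mathrm{GL}(2d,\R)$ is a linear bijection, $|L(\supp F)| = |\det L|\cdot|\supp F|$, so finiteness of the measure is preserved in both directions. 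Composing, $|\supp\,\mu(\bV_Q)\mu(\bD_L)F| = |\det L|\cdot|\supp F|$, which is finite if and only if $|\supp F|$ is finite.

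Applying this with $F = \W_{\bB}(f,g)$ gives $|\supp\,\W_{\bA}(f,g)| = |\det L|\cdot|\supp\,\W_{\bB}(f,g)|$, and since $\det L\neq 0$, \eqref{eq:red_ortho} holds if and only if $|\supp\,\W_{\bB}(f,g)|<\infty$. This is really a routine computation rather than a genuine obstacle; the only point that needs a word of care is that the global phase ambiguity in the choice of $\mu(\bA)$, $\mu(\bV_Q)$, $\mu(\bD_L)$, $\mu(\bB)$ is harmless here, because a global phase is a unimodular constant and changes neither supports nor their measures, so the factorization $\mu(\bA) = \mu(\bV_Q)\mu(\bD_L)\mu(\bB)$ may be used up to such a constant without affecting the conclusion. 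One should also note $f,g$ are assumed non-zero only so that $\W_{\bB}(f,g)\not\equiv 0$ and the statement is not vacuous; it plays no other role in the argument.
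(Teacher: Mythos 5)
Your argument is correct and is essentially the paper's own proof: both use the explicit formulas $\mu(\bD_L)F(\lambda)=|\det L|^{-1/2}F(L^{-1}\lambda)$ and $\mu(\bV_Q)F(\lambda)=e^{\pi i\lambda\cdot Q\lambda}F(\lambda)$ to conclude $\supp\,\W_{\bA}(f,g)=L\cdot\supp\,\W_{\bB}(f,g)$, whence finiteness of the measure is preserved in both directions. Your added remarks on the harmlessness of the global phase and on the role of the non-vanishing hypothesis are accurate but not needed.
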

\begin{proof}
By \eqref{eq:standard_metaplectic}, for 
 $f,g\in\ltd$, $\lambda\in\rdd$, we have
\begin{equation}
         \begin{split}
             |\W_\bA (f,g)(\lambda)|
             & = |\mu(\bV_Q)\mu(\bD_L) \mu(\bB)(f \otimes \overline{g})(\lambda)| \\
             & = |\det L|^{-1/2} |e^{\pi i \lambda\cdot Q \lambda}\mu(\bB)(f \otimes \overline{g})(L^{-1}\lambda)|.
         \end{split}
     \end{equation}
     Therefore,
    %  \begin{align}
    %     |\W_{\bA}(f,g)(\lambda)| 
    %     & =\abs{\det  L}^{-1/2} | (\W_{\bB}(f,g))\,({L}^{-1}\lambda)|,
    % \end{align}
    % and thus,
    \begin{equation}
        \supp\, \W_{\bA}(f,g) =  L\cdot \supp\,\W_{\bB}(f,g).
    \end{equation}
    Clearly  $\supp\,\W_{\bA }(f,g)$ has finite measure,  
    if and only if 
    $\supp\,\W_{\bB}(f,g)$ has finite measure.
\end{proof}

We now prove an uncertainty principle for the $\bA$-Wigner
distribution of a special class of  free symplectic matrices. % $\bA\in\Sp$.
\begin{theorem}\label{thm:new_uncertainty_free} 
Let $\bA = \bJ \bV_P = \begin{psmallmatrix}
    P & I \\ -I & 0
\end{psmallmatrix}\in\Sp$ be a free symplectic matrix with 
$$
P = \begin{pmatrix}
    P_{11} & P_{12} \\ P_{21} & P_{22}
\end{pmatrix} \in\R^{2d\times 2d}, 
$$
such that the $d\times d$ blocks on the side diagonal $P_{12}=P_{21}^t \neq 0$ are not zero.  Then Benedicks's uncertainty principle holds:
    \begin{center}
    If $ \mathrm{W}_\bA (f,g)$ is supported on a set of finite measure for some $f,g\in\ltd$, \\
    then $f\equiv 0$ or
  $g\equiv 0$.
\end{center}
% If the support of $\W_{\bA}(f,g)$
% has finite measure for some functions $f,g\in \ltd$, then $f\equiv 0$ or $g\equiv 0$.
\end{theorem}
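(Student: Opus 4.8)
The plan is to reduce $\W_\bA(f,g)$, through a short sequence of innocuous normalisations, to a partial \stft\ $\Vb^r_G F$ with $r := \operatorname{rank}P_{12}\ge 1$, and then to invoke Lemma~\ref{lem:supp_partial_STFT}. Recall that, up to an irrelevant unimodular factor, $\W_\bA(f,g) = \F\big[\,e^{\pi i\lambda\cdot P\lambda}\,f(\lambda_1)\overline{g(\lambda_2)}\,\big]$ for $\lambda = (\lambda_1,\lambda_2)\in\R^d\times\R^d$. \emph{First I make $P$ hollow.} Writing $P^\circ := \begin{psmallmatrix} 0 & P_{12}\\ P_{21}&0\end{psmallmatrix}$, we have $\bV_P = \bV_{\diag(P_{11},P_{22})}\bV_{P^\circ}$ by Lemma~\ref{rem:Sp_rechenregel}(iv) in dimension $2d$, and by Remark~\ref{rem:embedding} together with $\mu(\V_Q)\overline h = \overline{\mu(\V_{-Q})h}$ the block-diagonal chirp $\mu(\bV_{\diag(P_{11},P_{22})})$ splits off as a chirp on $f$ and a chirp on $g$. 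Since chirping preserves the property of being non-zero and changes $\W_\bA(f,g)$ only by a unimodular factor, we may replace $(f,g)$ accordingly and assume $P = P^\circ$ is hollow with side blocks $R := P_{12} = P_{21}^t\ne 0$. Then $\lambda\cdot P\lambda = 2\lambda_1\cdot R\lambda_2$, so $\W_\bA(f,g) = \F\big[\,e^{2\pi i\lambda_1\cdot R\lambda_2}f(\lambda_1)\overline{g(\lambda_2)}\,\big]$.

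\emph{Next I diagonalise $R$.} Take a singular value decomposition $R = W_1\Sigma W_2^t$, with $W_1,W_2\in\mathrm{O}(d)$ and $\Sigma = \diag(\sigma_1,\dots,\sigma_r,0,\dots,0)$, $\sigma_1\ge\cdots\ge\sigma_r>0$, $r = \operatorname{rank}R\ge 1$. The substitution $\lambda_1 = W_1y$, $\lambda_2 = W_2z$ turns the phase into $y\cdot\Sigma z$, amounts to replacing $f,g$ by the rotated functions $f(W_1\,\cdot\,)$ and $g(W_2\,\cdot\,)$ (again non-zero iff $f,g$ are), and composes the output with the orthogonal map $(\xi_1,\xi_2)\mapsto(W_1^t\xi_1,W_2^t\xi_2)$, which preserves the Lebesgue measure of the support. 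A further substitution rescaling the first $r$ components of $\lambda_2$ by $\sigma_j^{-1}$ (a dilation of $g$ and a linear rescaling of the output, both innocuous) lets us assume $R = \begin{psmallmatrix} I_r & 0\\ 0 & 0\end{psmallmatrix}$.

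\emph{Then I identify the partial \stft.} Write $\lambda_1 = (a,b)$, $\lambda_2 = (c,e)$, $\xi_1 = (p,q)$, $\xi_2 = (s,w)$ with $a,c,p,s\in\R^r$ and $b,e,q,w\in\R^{d-r}$, so the phase is just $a\cdot c$. Carrying out the $b$-, $e$- and $c$-integrals in that order: the $b$-integral produces the partial Fourier transform $F := \mathcal{F}_2 f$ of $f$ in its last $d-r$ coordinates, evaluated at $(a,q)$; the $e$-integral produces $\overline{\mathcal{F}_2 g(c,-w)}$; and the $c$-integral, being $\int_{\R^r}\overline{\mathcal{F}_2 g(c,-w)}\,e^{2\pi i c\cdot(a-s)}\,dc = \overline{\widehat{g}(a-s,-w)}$, leaves
\[
\W_\bA(f,g)(\xi_1,\xi_2) = \int_{\R^r} F(a,q)\,\overline{G(a-s,w)}\,e^{-2\pi i\,a\cdot p}\,da, \qquad G(t',t'') := \widehat{g}(t',-t'').
\]
By Definition~\ref{def:partial_stft} the right-hand side is exactly $\Vb^r_G F(s,q,p,-w)$; that is, $\W_\bA(f,g)$ equals $\Vb^r_G F$ precomposed with the volume-preserving linear bijection $(p,q,s,w)\mapsto(s,q,p,-w)$ of $\R^{2d}$. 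Hence $|\supp\W_\bA(f,g)|<\infty$ iff $|\supp\Vb^r_G F|<\infty$, and since $1\le r\le d$, Lemma~\ref{lem:supp_partial_STFT} forces $F\equiv 0$ or $G\equiv 0$, i.e.\ $f\equiv 0$ or $g\equiv 0$ (the maps $f\mapsto F$ and $g\mapsto G$ being, up to a reflection of variables, Fourier transforms and thus injective on $\ltd$). Undoing the two normalisations gives the same dichotomy for the original $f,g$.

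The main obstacle is this last step: keeping the four groups of variables and the signs in the exponentials straight while peeling off the three iterated Fourier integrals, and verifying that every normalisation really does preserve both ``$f\not\equiv 0$ and $g\not\equiv 0$'' on the input side and ``$\supp$ of finite measure'' on the output side. The representation-theoretic content is slight: once $\bA$ has been brought to the form $\bJ\bV_{P^\circ}$ with $P^\circ$ hollow, the rest is elementary Fourier analysis together with the already-established Lemma~\ref{lem:supp_partial_STFT}.
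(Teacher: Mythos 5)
Your proposal is correct and follows essentially the same route as the paper's proof: take the SVD of $P_{12}$, strip off the block-diagonal chirps and orthogonal dilations as harmless normalisations, compute the Fourier integral explicitly to recognise a partial \stft\ of partial Fourier transforms of $f$ and $g$, and conclude with Lemma~\ref{lem:supp_partial_STFT}. The only cosmetic difference is that you normalise $\Gamma$ all the way to $\begin{psmallmatrix} I_r & 0\\ 0 & 0\end{psmallmatrix}$ before the integral computation, whereas the paper keeps $\Gamma_1$ and removes it by a substitution at the end.
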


\begin{proof}
%    \textbf{This is an alternative proof of Theorem \ref{thm:new_uncertainty_free}.}

Let $f,g\in\ltd$ be two functions such that  $\W_{\bA}(f,g)$ is supported on a set of finite measure. We want to show that $f\equiv 0$ or $g\equiv 0$. 
We begin with a reduction to metaplectic operators with a simple action on elementary tensors. 

% We first reduce the problem to metaplectic operators with a simple action on elementary tensors. 

    Since
    $P_{12} = P_{21}^t\neq 0$,  it 
    has a singular value decomposition
    \begin{equation}
    P_{12} = W_1\cdot \Gamma \cdot W_2^t,
    \end{equation} 
    where $W_1, W_2\in \mathrm{O}(d,\R)$ and $\Gamma$ is a diagonal
    matrix  with non-negative entries, % singular values sorted in descending order on the main diagonal,
    i.e.,
    $$
    \Gamma = \begin{pmatrix}
        \Gamma_1 & { 0}_{k\times k} \\ 
        { 0}_{d-k \times k} & { 0}_{d-k \times d-k}
    \end{pmatrix},\quad \Gamma_1 \in \mathrm{GL}(k,\R),\quad k = \mathrm{rank}(P_{12})\geq 1.
    $$
    We write
    \begin{align*}
      P &= \begin{pmatrix}
            0 & P_{12} \\ P_{12}^t & 0  
        \end{pmatrix} + \begin{pmatrix} P_{11} &0\\ 0 & P_{22}\end{pmatrix}\\
&= \begin{pmatrix} W_1 &0\\0&W_2 \end{pmatrix}\, \begin{pmatrix} 0&\Gamma\\\Gamma&0 \end{pmatrix}\,
\begin{pmatrix} W_1^t &0\\0&W_2^t \end{pmatrix}+\begin{pmatrix} P_{11} &0\\ 0 & P_{22}\end{pmatrix}.
    \end{align*}
Using Lemma~\ref{rem:Sp_rechenregel}, % first (iii)  and then (ii),
we can write $\bA$ as
    \begin{equation}
    \begin{split}
        \bA& =\bJ \bV_P  % \bJ \bV_{\begin{psmallmatrix}
        %     0 & P_{12} \\ P_{12}^t & 0  
        % \end{psmallmatrix} +\diag(P_{11},P_{22})} = \bJ \bV_{\begin{psmallmatrix}
        %     0 & W_1 \Gamma W_2^t \\ W_2 \Gamma W_1^t & 0  
        % \end{psmallmatrix} }
        % \bV_{\diag(P_{11},P_{22})} \\[1ex]
        % &
        = \bJ \bD_{\diag(W_1,W_2)} \bV_{\begin{psmallmatrix}
            0 & \Gamma \\ \Gamma & 0  
        \end{psmallmatrix}} \bD_{\diag(W_1,W_2)^t} \bV_{\diag(P_{11}, P_{22})} \\[1ex]
        & = \bD_{\diag(W_1,W_2)} \bJ \bV_{\begin{psmallmatrix}
            0 & \Gamma \\ \Gamma & 0  
        \end{psmallmatrix}} \bD_{\diag(W_1^t,W_2^t)} \bV_{\diag(P_{11}, P_{22})}.
    \end{split}
    \end{equation}
We note that $\mu(D_{W_1^t}\V_{P_{11}}), \mu(D_{W_2^t}\V_{-P_{22}})$
belong to $\Mpp$ and set   $f_1\coloneqq  \mu(\D_{W_1^t}\V_{P_{11}})f$, $ g_1\coloneqq \mu(D_{W_2^t}\V_{-P_{22}}) g\in\ltd$. 
Using Remark \ref{rem:embedding} and $\mu (\J ) = \F$, we can rewrite
the absolute value of $\W_{\bA}(f,g)$ as
\begin{align}
    |\W_{\bA}(f,g)| = 
    \big|\mu(\bD_{\diag(W_1,W_2)}) \F \mu\big(\bV_{\begin{psmallmatrix}
            0 & \Gamma \\ \Gamma & 0  
        \end{psmallmatrix}}\big) (f_1 \otimes\overline{g_1})\big|.
\end{align}
By Lemma~\ref{lem:reduce_ortho}, % As discussed in the introduction, the dilation operator
% $\mu(\bD_{\diag(W_1,W_2)})$ only rotates the support of $\F
% \mu(\bV_{\begin{psmallmatrix} 
%             0 & \Gamma \\ \Gamma & 0  
%         \end{psmallmatrix}}) (f_1 \otimes\overline{g_1})$ and does not
%       change its measure. Thus, 
      $\F \mu(\bV_{\begin{psmallmatrix}
            0 & \Gamma \\ \Gamma & 0  
        \end{psmallmatrix}}) (f_1 \otimes\overline{g_1})$ 
        is also  supported on a set of finite measure.
        
By  the definition of $\V _P$ in \eqref{eq:standard_metaplectic}, 
\begin{equation}
% \F \mu(\bV_{\begin{psmallmatrix}
%             0 & \Gamma \\ \Gamma & 0  
%         \end{psmallmatrix}}) (f_1 \otimes\overline{g_1})(x,y) = 
    \mu(\bV_{\begin{psmallmatrix}
            0 & \Gamma \\ \Gamma & 0  
        \end{psmallmatrix}}) (f_1 \otimes\overline{g_1})(x,y) =
      e^{2\pi i \Gamma x\cdot  y} f_1(x) \overline{g_1}(y).
\end{equation}
This function is in $L^2(\rdd )$, and, with a suitable interpretation the
integrals,  its Fourier transform is %     By Plancherel's theorem and a suitable interpretation of the
    % Fourier integral of an $L^2$-function, for almost all $(\omega,\xi)\in\R^{2d}$, the integrals to follow are 
    % well-defined;~\footnote{For ease of reading, we use $\F f = \hat{f}$ interchangeably.}
    \begin{align}
        & \phantom{=} \F \mu(\bV_{\begin{psmallmatrix}
            0 & \Gamma \\ \Gamma & 0  
        \end{psmallmatrix}}) (f_1 \otimes\overline{g_1})(\omega, \xi) 
   \notag     \\
        & = \int_\Rd f_1(x) e^{-2\pi i \omega\cdot x} \, \int_\Rd
          \overline{g_1}(y)  e^{2\pi i \Gamma x\cdot  y} \, e^{-2\pi i
          \xi \cdot y} \, d y \, d x \notag \\
        & = 
        \int_\Rd f_1(x) e^{-2\pi i \omega \cdot x}  \, \overline{\hat{g_1}} (\Gamma x -\xi)  \ dx .
          \label{eq:free_integral_alt}
    \end{align}

If $\Gamma$ were invertible, then we could identify the above expression, after a coordinate change, as a \stft\ with a support of finite measure, and we could finalize the proof.
However, in general, the off-diagonal blocks $P_{12}$, $P_{21} = P_{12}^t$, and thus $\Gamma$, are \emph{not} invertible. 
This is the reason why we have to use the partial \stft\ in \eqref{eq:def_partial_STFT}. 
To {process} the integral in \eqref{eq:free_integral_alt} further, we evaluate $\W_\bA(f,g)$ at the points
\begin{equation}%[-20pt]
    \big(\diag(\Gamma_1, I) \omega,  \xi \big)=(\Gamma_1 \omega_1,
    \omega_2,  \xi)\in \R ^k \times \R ^{d-k}\times \R ^d 
\end{equation}
instead of at $(\omega,\xi)$. With this coordinate change, the final expression in \eqref{eq:free_integral_alt} becomes
\footnote{If $k =d$, we set the inner integral over $\R^{d-k}$ to be the point evaluation of the integrand.}
\begin{equation} \label{eq:free_integral_f3_alt}
\begin{split}
        &\phantom{=}
        % \bD_{D_{W^t}}\F\bV_{P} \F_{\tau I}(f\otimes g)(x, \omega) \\
        % & = 
        \F\mu(\bV_{\begin{psmallmatrix}
            0 & \Gamma \\ \Gamma & 0
        \end{psmallmatrix}})(f_1\otimes \overline{\hat{g_1}})\big( \diag(\Gamma_1, I) \omega, \xi \big) \\
        &=
        \int_\Rd f_1(x) e^{-2\pi i \omega \cdot \diag(\Gamma_1, I)x}\, \overline{\hat{g_1}} (\Gamma x - \xi)  \ dx \\
        &=
        \int_{\R^k} 
        % \underbrace{
        \overline{\hat{g_1}} (\Gamma_1 x_1 -\xi_1, -\xi_2)
        % }_{\coloneqq \overline{g_2}(\Gamma_1 x_1 -\omega_1, \omega_2)} 
        e^{-2\pi i \omega_1\cdot \Gamma_1 x_1} 
        % \\
        %\hspace{-6pt} 
        % &\qquad \times
        % \underbrace{
        \int_{\R^{d-k}} f_1(x_1,x_{2}) e^{-2\pi i \omega_2\cdot x_2 } 
        \, d x_2 \ 
        %}_{\eqqcolon f_2(\Gamma_1 x_1,\xi_2)} 
        d x_1.
        \end{split}
 \end{equation}
 We set $g_2= \widehat{g_1}$ and 
 \begin{equation}
     \begin{split}
         f_2(\Gamma_1 x_1,\omega_2) & \coloneqq \int_{\R^{d-k}} f_1(x_1,x_{2}) e^{-2\pi i \omega_2\cdot x_2} \, d x_2,\\
     %g_2(\omega) &\coloneqq \F g_1(\omega) = \int_{\Rd} g_1(x)e^{-2\pi i x\cdot \omega} \, dx.
     \end{split}
 \end{equation}
Both $f_2$ and $g_2$ are (rescaled) partial Fourier transforms of $f_1$ and $g_1$, therefore $f_2,g_2\in\ltd$
 (note again  that the integral defining $f_2$ has to be interpreted
 in the sense of Plancherel's theorem). % make sense only for $f_1, g_1\in L^1(\Rd)$ and almost everywhere in $x_1$, otherwise, we would use Plancherel's theorem).
With this definition and the substitution $t = \Gamma_1 \eta_1$, \eqref{eq:free_integral_f3_alt} can be turned into the final form
 % \vskip50pt
        \begin{equation} \label{eq:free_integral_final_alt}
        \begin{split}
         &\phantom{=}
        % \bD_{D_{W^t}}\F\bV_{P} \F_{\tau I}(f\otimes g)(x, \omega) \\
        % & = 
        \F\mu(\bV_{\begin{psmallmatrix}
            0 & \Gamma \\ \Gamma & 0
        \end{psmallmatrix}})(f_1\otimes \overline{\hat{g_1}})\big( \diag(\Gamma_1, I) \omega,\xi \big) \\
        &=
        \int_{\R^k}  f_2(\Gamma_1 x_1, \omega_2)  \overline{g_2} (\Gamma_1 x_1 -\xi_1, -\xi_2)  e^{-2\pi i \omega_1\cdot \Gamma_1 x_1} \, d x_1 \\
        & \overset{t = \Gamma_1 x_1}{=} |\det \Gamma_1|^{-1} 
        \int_{\R^k}  f_2(t, \omega_2)  \overline{g_2} (t -\xi_1, -\xi_2)  e^{-2\pi i \omega_1\cdot t} \, d t
        \\
        & = |\det \Gamma_1|^{-1} \Vb_{g_2}^k f_2 (\xi_1, \omega_2, \omega_1 , \xi_2).
        \end{split}
        \end{equation}
        Since $\W_\bA(f,g)$ has a support of finite measure, the transformed expression \eqref{eq:free_integral_final_alt} also has a support of finite measure.
By Lemma \ref{lem:supp_partial_STFT}, this implies that $f_2 \equiv 0$ or $g_2 \equiv 0$. 
Since $f_2$ and $g_2$ are the image of $f$ and $g$ under unitary operators, it follows that $f\equiv 0$ or $g \equiv 0$.
This concludes the proof.
\end{proof}

\begin{proof}[Proof of Theorem~ \ref{thm:tmintro}] 
The theorem formulated in the introduction is a direct consequence of   Theorem \ref{thm:new_uncertainty_free}.  Let $\bA = \begin{psmallmatrix}
        A & B \\ C & D
    \end{psmallmatrix}\in\Sp$
    be a free symplectic matrix. We set $ 
        Q = D B^{-1}$ and $ P = B^{-1}A.$
     By Proposition \ref{prop:free_Sp_decomposition},  $\bA $ factors as 
     \begin{equation}
         \bA  = \bV_{Q} \bD_{B} \bJ  \bV_{P}.
     \end{equation} 
    By Lemma~\ref{lem:reduce_ortho}, % \eqref{eq:standard_metaplectic}, we have  for all $\lambda\in\rdd$ 
    % \begin{equation}
    %     |\W_\bA(f,g)(\lambda)| = |\mu(\bD_{B}) \W_{\bJ
    %       \bV_P}(f,g)(\lambda)| = |\det B|^{-1/2}\, |\W_{\bJ
    %       \bV_P}(f,g)(B^{-1}\lambda)|.  
    %     %,\quad \lambda\in\rdd.
    % \end{equation}
    % Thus, 
    $\W_\bA(f,g)$ is supported on a set of finite measure for
    some $f,g\in\ltd$ if and only if $\W_{\bJ \bV_P}(f,g)$ is
    supported on a set of finite measure.
    
    \textit{(i)}  Assume $P$ is not block-diagonal.  Then  Theorem
    \ref{thm:new_uncertainty_free}  implies that $f\equiv 0$ or $g\equiv 0$, as claimed.

    \textit{(ii)} If $P = \begin{psmallmatrix}
        P_{11} & 0 \\ 0 & P_{22}
    \end{psmallmatrix}$
    is block-diagonal, then by Remark \ref{rem:embedding}
    \begin{equation}
        \begin{split}
            |\W_{\bJ \bV_P}(f,g)(\lambda)| & = | \mu (\bJ ) \big((\mu(\V_{P_{11}})f\otimes \overline{\mu(\V_{-P_{22}})g}\big)(\lambda)| \\
            & = \left|\big(\mu(\J\V_{P_{11}})f\otimes \overline{\mu(\J^t\V_{-P_{22}})g}\big)(\lambda)\right|. 
        \end{split}
    \end{equation}
Now choose  non-zero $f_0,g_0\in\ltd$, %$f_0\not\equiv 0$, $g_0\not\equiv 0$
with compact support and set %, the functions 
    \begin{equation}
        f =\mu(\J\V_{P_{11}})^{-1}f_0, \qquad g
        =\mu(\J^{t}\V_{-P_{22}})^{-1}g_0 \, .
    \end{equation}
     Then $W_{\bJ \bV_P}(f,g)$ satisfies  
    \begin{equation}
        |\W_{\bJ \bV_P}(f,g)(\lambda)| = |f_0\otimes
        \overline{g_0}(\lambda)|\, ,
    \end{equation}
    and has compact support. Consequently, $\W _{\bA } (f,g)$ has also
    compact support. This concludes the proof.
\end{proof}

\section{The structure of the symplectic group}\label{sec:structure}
Before treating
% we show how to derive an
uncertainty principles for arbitrary symplectic matrices, 
we need a deeper 
%understanding [If you use "understanding", the paragraph is ugly.]
grasp 
of the structure of the symplectic group and its factorizations. Our
main tool is a decomposition of  symplectic matrices that resembles
the Iwazawa decomposition and is called the pre-Iwazawa
decomposition~\cite{Gosson2006}.

\begin{lemma}
  Every symplectic matrix $\A    \in \Sp $ can be factored as
  \begin{equation}\label{eq:iwasawa}
    \A = \V_Q \D_L \Ro_U = \begin{pmatrix}
        I & 0 \\
        Q & I
    \end{pmatrix}\cdot \begin{pmatrix}
      L & 0 \\ 0 & L^{-t}  \end{pmatrix}
      \cdot \begin{pmatrix}
        A & B \\
        -B & A
    \end{pmatrix}
\end{equation}
with $L\in\mathrm{GL}(d,\R)$,  $Q=Q^t\in\R^{d\times d}$, and $U=
A+iB\in\mathrm{U}(d,\C)$. This decomposition is unique if $L$ is
symmetric. 
\end{lemma}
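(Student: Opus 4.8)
The plan is to prove existence by an explicit construction imitating the polar decomposition, and uniqueness by a short comparison of the elementary factors. Write $\A=\begin{psmallmatrix}A&B\\C&D\end{psmallmatrix}$ with $d\times d$ blocks. The only symplectic relation I will need is $AB^t=BA^t$ (read off from $\A\J\A^t=\J$, which is equivalent to $\A^t\J\A=\J$), together with the invertibility of $\A$. Since $\A\in\mathrm{GL}(2d,\R)$, its first $d$ rows $(A\ B)$ have rank $d$, so $(A\ B)(A\ B)^t=AA^t+BB^t$ is symmetric positive definite; I set $L:=(AA^t+BB^t)^{1/2}$, the positive definite symmetric square root, and $U:=L^{-1}(A+iB)\in\C^{d\times d}$. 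Using $BA^t=AB^t$ one computes $UU^*=L^{-1}\bigl(AA^t+BB^t+i(BA^t-AB^t)\bigr)L^{-1}=L^{-1}L^2L^{-1}=I$, so $U\in\mathrm{U}(d,\C)$; by Lemma~\ref{rem:Sp_rechenregel} the matrices $\D_L$ and $\Ro_U$ are then symplectic, with inverses $\D_{L^{-1}}$ and $\Ro_{U^*}$.

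\textbf{The key step.} It remains to verify that $\V:=\A\,\Ro_U^{-1}\D_L^{-1}=\A\,\Ro_{U^*}\D_{L^{-1}}$ — which is symplectic, being a product of symplectic matrices — has first block-row equal to $(I\ \ 0)$. From $LU=A+iB$ we get $(A+iB)U^*=L$, and separating real and imaginary parts (recall $L$ is real) gives $A\,\re(U^*)-B\,\im(U^*)=L$ and $A\,\im(U^*)+B\,\re(U^*)=0$. Now $\Ro_{U^*}\D_{L^{-1}}$ has the block form $\begin{psmallmatrix}\re(U^*)L^{-1}&\im(U^*)L\\-\im(U^*)L^{-1}&\re(U^*)L\end{psmallmatrix}$, so left-multiplying by the first block-row $(A\ B)$ of $\A$ produces as first block $(A\,\re U^*-B\,\im U^*)L^{-1}=LL^{-1}=I$ and as second block $(A\,\im U^*+B\,\re U^*)L=0$. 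Thus $\V=\begin{psmallmatrix}I&0\\S&T\end{psmallmatrix}$ is symplectic, and a direct check of $\V^t\J\V=\J$ forces $T=I$ and $S=S^t$ (this is the converse direction of ``$\V_Q$ is symplectic iff $Q$ is symmetric''). Hence $\V=\V_Q$ with $Q=Q^t$, and $\A=\V_Q\D_L\Ro_U$ with $L$ symmetric positive definite.

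\textbf{Uniqueness.} Suppose $\V_{Q_1}\D_{L_1}\Ro_{U_1}=\V_{Q_2}\D_{L_2}\Ro_{U_2}$ with $L_1,L_2$ symmetric positive definite (as produced by the construction; positive-definiteness is what makes the decomposition unique, otherwise $L$ could be altered by a symmetric orthogonal factor). Collecting factors via Lemma~\ref{rem:Sp_rechenregel}(iv) gives $\D_{L_1}\Ro_{V}\D_{L_2^{-1}}=\V_{Q_2-Q_1}$ with $V:=U_1U_2^*\in\mathrm{U}(d,\C)$. Its upper-right block equals $L_1\,\im(V)\,L_2$, which must vanish, so $V\in\mathrm{O}(d,\R)$ and hence $\Ro_V=\D_V$ by Lemma~\ref{rem:Sp_rechenregel}(i). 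Then the left-hand side equals $\D_{L_1VL_2^{-1}}$, a block-diagonal matrix, while the right-hand side is lower unitriangular; the only matrix that is both is the identity, so $Q_1=Q_2$ and $L_1V=L_2$. Finally $L_2^2=L_2L_2^t=L_1VV^tL_1=L_1^2$, and uniqueness of the positive definite square root yields $L_1=L_2$, whence $V=I$ and $U_1=U_2$.

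\textbf{Main obstacle.} I expect the only real content to be the computation in the key step: showing that stripping off $\Ro_U$ and $\D_L$ flattens the first block-row of $\A$ to $(I\ \ 0)$. This is short, but it is the one place where the symplectic identity $AB^t=BA^t$ and the unitarity of $U$ are genuinely used; everything else is bookkeeping, modulo the small point (needed for $L$ to exist at all) that $AA^t+BB^t$ is invertible because $\A\in\mathrm{GL}(2d,\R)$.
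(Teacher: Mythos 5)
Your proof is correct and self-contained; the paper itself only cites de Gosson's pre-Iwasawa decomposition for this lemma, and your polar-decomposition-style construction ($L=(AA^t+BB^t)^{1/2}$, $U=L^{-1}(A+iB)$, then checking that stripping off $\Ro_U$ and $\D_L$ leaves a matrix of the form $\V_Q$) is essentially the direct proof given in that reference. One worthwhile point you raise: uniqueness genuinely requires $L$ to be positive definite rather than merely symmetric --- for instance $\D_L\Ro_U=\D_{-L}\Ro_{-U}$ gives two distinct factorizations with symmetric $L$ --- so your parenthetical remark is in fact a small but legitimate correction to the lemma as stated.
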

\begin{proof}
  The decomposition can be derived easily from the Iwazawa
  decomposition of $\Sp$~\cite{Iwasawa1949, Knapp_LieBeyond}. A nice
  and direct proof was given by deGosson~\cite[Sec.~2.2.2]{Gosson2006}. It has the
  additional advantage that it does not require any knowledge of Lie
  group theory and provides explicit formulas for $Q,L,A,B$. 
\end{proof}

The advantage of \eqref{eq:iwasawa} is the fact that 
the operators $\mu(\V_Q),\,\mu(\D_L)$ do not change the nature of the
support of a function,  so that  a support of
finite measure (or compact support) is  preserved, as observed in
Lemma~\ref{lem:reduce_ortho}.  %see the examples in the introduction and \eqref{eq:standard_metaplectic}. 
% and we have already seen in the introduction that those operators have no impact on the uncertainty principle, cf. Lemma \ref{lem:reduce_ortho}.
Thus the  factor for the validity of an uncertainty principle is played by the symplectic rotations $\Ro_U\in \Spp$.
Since we have already established an uncertainty principle for free
symplectic matrices, we aim to reduce the general case to this special case.

We make use of the following result
\cite[Thm.~5.1.]{FuerRzeszotnik2018}, which is a  version of a joint singular value decomposition over $\R$ of the real and imaginary part of $U\in\mathrm{U}(d,\C)$. 
\begin{lemma}\label{lem:unitary_decomp}
 For every $U =A+iB\in\mathrm{U}(d,\C)$ there exist orthogonal matrices $W, V\in\mathrm{O}(d,\R)$ and a diagonal unitary matrix $\Sigma = \Sigma_r+i\Sigma_i \in\mathrm{U}(d,\C)$ such that
    $$
    U = W \Sigma V^t, \quad \text{i.e., }\quad A = W\Sigma_r V^t,\ B = W \Sigma_i V^t.
    $$
\end{lemma}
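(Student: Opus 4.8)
The plan is to recast the claimed factorization as a \emph{simultaneous diagonalization} statement and to isolate the one matrix built from $U$ that makes it work. Writing $U = A+iB\in\mathrm{U}(d,\C)$, the goal $U = W\Sigma V^t$ with $W,V\in\mathrm{O}(d,\R)$ real and $\Sigma$ diagonal unitary is equivalent to diagonalizing $A$ and $B$ \emph{by the same pair} $W,V$ of real orthogonal matrices, since for real $W,V$ one has $\re(W\Sigma V^t) = W\Sigma_r V^t$ and $\im(W\Sigma V^t) = W\Sigma_i V^t$. The key object I would introduce is $S \coloneqq U U^t$. As a product of unitaries it is unitary, and by construction it is \emph{complex symmetric}; these two facts together are what unlock the proof.

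First I would exploit the structure of $S$. Writing $S = S_1 + iS_2$ with $S_1,S_2$ real, symmetry of $S$ forces $S_1 = S_1^t$ and $S_2 = S_2^t$, and unitarity $S\overline{S} = I$ (using $\overline{S} = S^*$ for symmetric $S$) yields $S_1^2 + S_2^2 = I$ and $S_1S_2 = S_2S_1$. Two commuting real symmetric matrices are simultaneously diagonalizable by a single $W\in\mathrm{O}(d,\R)$, so $W^t S W = \Lambda$ is diagonal, and $\Lambda = W^t S W$ is again unitary (equivalently, $S_1^2+S_2^2=I$ gives $|\Lambda_{jj}|=1$). Thus $U U^t = W\Lambda W^t$ with $\Lambda$ diagonal unitary, and I may pick entrywise unit-modulus square roots of its diagonal to obtain a diagonal unitary $\Sigma = \Sigma_r + i\Sigma_i$ with $\Sigma^2 = \Lambda$.

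Next I would set $V \coloneqq \Sigma^{-1}W^t U$ and check $V\in\mathrm{O}(d,\R)$. Unitarity of $V$ is immediate from $UU^* = I$, $W^tW=I$, and $\Sigma^{-1}(\Sigma^{-1})^* = I$. Reality is where the choice $S = UU^t$ pays off: unitarity of $U$ means $\overline{U} = U^{-t}$, so $\overline{V} = \Sigma W^t \overline{U}$ while $V = \Sigma^{-1}W^t U$, and hence $\overline{V} = V$ is equivalent to $W\Sigma^2 W^t = U\overline{U}^{-1} = UU^t$ — exactly the relation established in the previous step. Therefore $V$ is real and orthogonal, and $U = W\Sigma V$; replacing $V$ by $V^t\in\mathrm{O}(d,\R)$ gives $U = W\Sigma V^t$, and taking real and imaginary parts (legitimate since $W,V$ are real) produces $A = W\Sigma_r V^t$ and $B = W\Sigma_i V^t$, as required.

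The only genuinely non-routine point is recognizing that $UU^t$ — rather than $UU^*$, $U^tU$, or $A$ and $B$ individually — is the matrix to diagonalize: it is the unique natural combination that is simultaneously complex symmetric (hence orthogonally diagonalizable, once one knows its real and imaginary parts commute) and "square-root compatible" with the sought factorization, in the sense that $W\Sigma^2W^t = UU^t$ is precisely what forces $\Sigma^{-1}W^tU$ to be real. Everything else — joint diagonalization of commuting real symmetric matrices, existence of a diagonal unitary square root, and the short computation that $V$ is real orthogonal — is standard linear algebra and I would leave it to a brief verification.
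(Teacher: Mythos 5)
Your argument is correct. Note first that the paper does not actually prove this lemma: it is quoted from F\"uhr--Rzeszotnik \cite[Thm.~5.1]{FuerRzeszotnik2018}, so there is no internal proof to compare against. What you have written is a clean, self-contained derivation, and it is essentially the classical route to this factorization: the matrix $S=UU^t$ is unitary and complex symmetric, so $S\overline S=I$ forces its real and imaginary parts to be commuting real symmetric matrices with $S_1^2+S_2^2=I$; simultaneous orthogonal diagonalization gives $S=W\Lambda W^t$ with $\Lambda$ diagonal unitary, a diagonal unitary square root $\Sigma$ of $\Lambda$ exists entrywise, and the identity $W\Sigma^2W^t=UU^t$ is exactly what makes $V=\Sigma^{-1}W^tU$ equal to its own conjugate (using $\overline U=U^{-t}$ and $\overline\Sigma=\Sigma^{-1}$), hence real orthogonal. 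All the individual steps you defer to "standard linear algebra" really are standard, and you do address the one point that could be glossed over, namely that $|\Lambda_{jj}|=1$. The only thing your writeup buys beyond the paper is independence from the cited reference; conversely, the reference's statement is slightly more general in presentation but the content is the same. One cosmetic remark: since the lemma asserts $U=W\Sigma V^t$, it is marginally cleaner to define $V\coloneqq U^t W\overline{\Sigma^{-1}}$ directly rather than to construct $V$ with $U=W\Sigma V$ and transpose at the end, but this changes nothing mathematically.
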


To move from free  symplectic matrices to general symplectic matrices, we use the following trick.
\begin{lemma} %[The rotation trick]
\label{lemma:tau-expansion} 
    For every $U = A+iB\in \mathrm{U}(d,\C)$  there exists a $\tau\in\T$ such that the orthogonal symplectic  matrix 
    $$
    \Ro_{\tau U} = \Ro_{\tau I} \Ro_{U} = \Ro_{U} \Ro_{\tau I}
    $$ 
    is \emph{free} and  $\im(\tau U)$ is invertible. 
    Furthermore, for a fixed $U$,   $\im(\tau U)$ fails to be invertible 
    for at most $2d$ choices of $\tau\in\T$.
\end{lemma}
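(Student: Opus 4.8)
The plan is to reduce both assertions of the lemma to a single statement about the invertibility of $\im(\tau U)$, and then to clear the $\bar\tau$-dependence by multiplying with a suitable power of $\tau$: this converts $\det\im(\tau U)$ into an honest polynomial whose non-vanishing at the origin is immediate.

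First I would record the identity $\Ro_{\tau U}=\Ro_{\tau I}\Ro_U=\Ro_U\Ro_{\tau I}$. This is just the homomorphism rule $\Ro_{U_1}\Ro_{U_2}=\Ro_{U_1U_2}$ from Lemma~\ref{rem:Sp_rechenregel}(iv) applied to the commuting unitaries $\tau I$ and $U$, and $\Ro_{\tau U}$ is orthogonal symplectic because $\tau U$ is unitary. Writing $\Ro_{\tau U}=\begin{psmallmatrix}\re(\tau U)&\im(\tau U)\\-\im(\tau U)&\re(\tau U)\end{psmallmatrix}$, the upper-right block is exactly $\im(\tau U)$, so the conditions ``$\Ro_{\tau U}$ is free'' and ``$\im(\tau U)$ is invertible'' are literally the same. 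It then remains to prove that $\det\im(\tau U)\neq 0$ for all but at most $2d$ values of $\tau\in\T$.

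For this, the key step is to make $\tau\mapsto\det\im(\tau U)$ polynomial. I would start from $\im(\tau U)=\tfrac1{2i}(\tau U-\bar\tau\,\overline U)$ and, since $\bar\tau=\tau^{-1}$ on $\T$, multiply by $\tau^d$ to get
\[
(2i)^d\,\tau^d\,\det\im(\tau U)=\tau^d\det\!\big(\tau U-\tau^{-1}\overline U\big)=\det\!\big(\tau^2 U-\overline U\big)=:p(\tau).
\]
Each entry of $\tau^2U-\overline U$ has degree $\le 2$ in $\tau$ with only even powers present, so $p$ is a polynomial in $\tau^2$ of degree $\le d$, hence a polynomial in $\tau$ of degree $\le 2d$; moreover $p(0)=\det(-\overline U)=(-1)^d\overline{\det U}\neq 0$ since $U$ is invertible, so $p\not\equiv 0$. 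Thus $p$ has at most $2d$ zeros in $\C$. Since $\tau^d\neq 0$ on $\T$, the zeros of $\det\im(\tau U)$ on $\T$ are among these, so $\im(\tau U)$ is singular for at most $2d$ values of $\tau\in\T$; because $\T$ is infinite, choosing $\tau$ outside this finite set makes $\im(\tau U)$ invertible and $\Ro_{\tau U}$ free, which proves the lemma.

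I do not anticipate a genuine obstacle: the only subtlety is the degree bookkeeping --- noticing that multiplying by $\tau^d$ (rather than any other power) is precisely what renders the $\bar\tau$-terms polynomial and yields a polynomial in $\tau^2$, which is where the bound $2d$ rather than $d$ comes from. Everything else is a one-line computation.
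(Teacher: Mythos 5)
Your proof is correct, but it takes a genuinely different route from the paper. The paper first invokes its Lemma~\ref{lem:unitary_decomp} (the F\"uhr--Rzeszotnik factorization $U = W\Sigma V^t$ with $W,V\in\mathrm{O}(d,\R)$ and $\Sigma$ diagonal unitary), writes $\im(\tau U)=W\cdot\im(\tau\Sigma)\cdot V^t$, and observes that the diagonal matrix $\im(\tau\Sigma)$ is singular exactly when some diagonal entry $\im(\tau\Sigma_{kk})$ vanishes, i.e., when $\tau=\pm\,\overline{\Sigma_{kk}}$; this identifies the (at most $2d$) exceptional values of $\tau$ explicitly. You instead bypass the factorization entirely and argue via the polynomial $p(\tau)=\det(\tau^2U-\overline{U})=(2i)^d\tau^d\det\im(\tau U)$, whose degree is at most $2d$ and which is nonzero since $p(0)=(-1)^d\overline{\det U}\neq 0$; the degree bookkeeping and the reduction of ``free'' to ``$\im(\tau U)$ invertible'' are both handled correctly. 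Your argument is more elementary and self-contained (it needs no joint singular value decomposition of $\re U$ and $\im U$), and it generalizes readily to non-unitary invertible matrices; the paper's approach buys an explicit description of the exceptional set, which is harmless to lose here since only the cardinality bound is used later. Both arguments yield the same bound of $2d$, so there is nothing to repair.
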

\begin{proof}
    By Lemma \ref{lem:unitary_decomp}, $U$ factors as $U=W\Sigma V^t$
    with %there exist
    $W,V\in\mathrm{O}(d,\R)$ and a diagonal unitary matrix $\Sigma =
    \diag( \Sigma _{kk})_{1\leq k\leq d} \in \mathrm{U}(d,\C)$
    with $|\Sigma _{kk}|=1$. % and %$U = W \Sigma V^t$.
    % $$
    % A = W \Sigma_r V^t,\quad B = W \Sigma_i V^t.
    % $$
Then     for all $\tau\in\T$,
    \begin{equation*}
        \begin{split}
            \im(\tau U) &
            = W\cdot \im(\tau \Sigma) \cdot V^t.
        \end{split}
    \end{equation*} 
    This shows that the 
    diagonal matrix $\im(\tau \Sigma)$ has zeros on the diagonal if
    and only if $\tau = \pm \overline{\Sigma_{kk}}$ for some $1\leq k\leq d$. Thus $\im(\tau U)$ 
    is singular if and only if $\tau $ assumes one of the (at most
    $2d$) values $\tau = \pm\, \overline{\Sigma_{kk}}$ for some $1\leq k\leq d$. 
    
    This proves both the existence of $\tau\in\T$ as desired as well as the upper bound on the number of exceptions. 

    Note that for $U = A+iB\in\mathrm{U}(d,\C)$, the corresponding orthogonal symplectic matrix $\Ro_U = \begin{psmallmatrix}
    A& B \\ -B & A
\end{psmallmatrix}$ is free if and only if $B = \im(U)$ is invertible. Thus, $\Ro_{\tau U}$ is free if and only if $\im(\tau U)$ is invertible.
\end{proof}

\subsection{Fractional Fourier transforms} 
Before proceeding with the reductions, we take a closer look at the metaplectic operators which project to $\Ro_\Sigma$ for some \emph{diagonal} unitary matrix $\Sigma\in \mathrm{U}(d,\C)$.

In the one-dimensional case, the maximal compact subgroup of $\mathrm{Sp} (1,
\bR )$ consists of the matrices  $\Ro_\sigma = \Ro_{e^{i\alpha}} = \begin{psmallmatrix}
    \cos(\alpha) & \sin(\alpha) \\ 
    -\sin(\alpha) & \cos(\alpha)
\end{psmallmatrix}$ and is thus  isomorphic to the torus $\T$.  % so any $\sigma = e^{i\alpha}\in\T$ is trivially diagonal. Hence, the maximal compact subgroup of $\mathrm{Sp}(2,\R)$ is abelian.
The corresponding metaplectic operator is (up to a phase factor) the
fractional Fourier transform $\F_\sigma= \mu (\Ro _\sigma )$, and in the literature, it is often identified with the corresponding angle $\alpha \in [0,2\pi)$. 
In particular, $\F_1 f = f$, $\F_{-1} f  = f(-\cdot)$, and $\F_i f= \F f$ is the classical Fourier transform \cite{Namias1980, Wiener1929}. 
If  $\sigma\not \in  \{-1,1\}$, then % e matrix $\Ro_\sigma = \Ro_{e^{i\alpha}} = \begin{psmallmatrix}
%     \cos(\alpha) & \sin(\alpha) \\ 
%     -\sin(\alpha) & \cos(\alpha)
% \end{psmallmatrix}$ is free, thus the corresponding operator
$\F _\sigma $ is a quadratic Fourier transform with the explicit
formula 
\begin{equation}\label{eq:frac_integral}
    \F_\sigma f(\omega) = k_\sigma\int_\R f(t) e^{\pi i \cot(\alpha) \omega^2 -2\pi i \csc(\alpha) \omega t +\pi i \cot(\alpha) t^2 } \, dt,
\end{equation}
% with kernel function
% \begin{equation}\label{eq:frac_kernel}
%     \begin{split}
%         K_\sigma(\omega, t) 
%         %& = k_\sigma \exp\left(\pi i \tfrac{\re(\sigma)}{\im(\sigma)} \omega^2 \right)\cdot\exp\left(-2\pi i \left( \tfrac{1}{\im(\sigma) }\omega t- \tfrac{\re(\sigma)}{2\im(\sigma)} t^2\right) \right) \\
%         & = k_\sigma e^{\pi i \cot(\alpha) \omega^2 -2\pi i \csc(\alpha) \omega t +\pi i \cot(\alpha) t^2 }.
%     \end{split}
% \end{equation}
where 
% $ \sigma = e^{\pi i \alpha}$ and 
$k_\sigma \in\C$ is a normalizing constant.

In higher dimensions, the correspondence of diagonal unitary matrices
to fractional Fourier transforms generalizes as follows. % via partial
                                % fractional Fourier transforms.  
Given a diagonal matrix $\Sigma= \diag(\Sigma_{jj})_{1\leq j\leq d}\in\mathrm{U}(d,\C)$, let $\Sigma_j$ be the diagonal matrix with $\Sigma_{jj}$ on the $j$-th diagonal entry and otherwise $1$. Then $\mu(\Ro_{\Sigma_j}) = \F_{\Sigma_{j}}$ acts as the partial fractional \ft\  
in the $j$-th coordinate, and 
\begin{equation}
\Sigma = \prod\limits_{j=1}^d \Sigma_j,\qquad
\Ro_\Sigma = \prod\limits_{j=1}^d \Ro_{\Sigma_j}, \qquad
    \mu(\Ro_{\Sigma}) = \F_{\Sigma} = \prod\limits_{j=1}^d \F_{\Sigma_j} .
\end{equation} 
In particular, $\F_{\Sigma}$ acts on elementary tensors as
\begin{equation}\label{eq:_frac_tensor}
    \F_\Sigma (f_1\otimes \dots \otimes f_d) = \F_{\Sigma_{11}}
    f_1\otimes \dots\otimes  \F_{\Sigma_{dd}} f_d.
\end{equation}
All $\F_{\Sigma}$ commute. Furthermore, from the integral
representation \eqref{eq:frac_integral} follows that there exists a constant $c_\Sigma \in\T$ such that % They also satisfy
\begin{equation}\label{eq:frac_inv_rule}
    \F_\Sigma\overline{f} =
    c_{\Sigma}\,\overline{\F_{\overline{\Sigma}} f}\, . %, \quad \Sigma\in\mathrm{U}(d,\C) \text{ diagonal,}
\end{equation}
%which will play a role in the existence of uncertainty principles.

We conclude the section with an uncertainty principle for pairs $(f,
\F_\Sigma f)$ for diagonal $\Sigma\in\mathrm{U}(d,\C)$. For $\Sigma =
\diag (i,i, \dots ,i)$, the corresponding operators is just the
ordinary Fourier transform $\F _\Sigma  = \F$, and  the result is just Benedicks's and
Amrein-Berthier's original \up .  
% The proof idea is motivated by the one-dimensional version \cite{Jaming2022}, though it is not a trivial generalization.\marginnote{\is{I'm sure you have a better way of saying this}}
\begin{theorem}\label{thm:Jaming-generalization}
    Let $\Sigma\in\mathrm{U}(d,\C)$ be a diagonal unitary matrix with  a non-zero imaginary part. Then Benedicks's uncertainty principle holds:
    \begin{center}
        If both $f$ and $\F_\Sigma f$ are supported on a set of finite measure, then $f$ is the zero function.
    \end{center}
\end{theorem}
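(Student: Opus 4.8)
The plan is to reduce the statement to the one-dimensional fractional-Fourier case coordinate by coordinate, and then within each one-dimensional slice to the classical Benedicks--Amrein-Berthier theorem for the pair $(f,\hat f)$. Write $\Sigma = \diag(\Sigma_{11},\dots,\Sigma_{dd})$ with $|\Sigma_{jj}|=1$, and let $J \subseteq \{1,\dots,d\}$ be the set of indices with $\Sigma_{jj} \notin \{-1,1\}$; by hypothesis $J \neq \emptyset$. By \eqref{eq:_frac_tensor} the operator $\F_\Sigma$ acts as the partial fractional Fourier transform $\F_{\Sigma_{jj}}$ in the $j$-th variable, and these operators commute. For the indices $j$ with $\Sigma_{jj} = \pm 1$ the operator $\F_{\Sigma_{jj}}$ is (up to sign) a reflection, which changes neither supports nor their measure; so without loss of generality I may assume $\Sigma_{jj} \notin \{-1,1\}$ for \emph{every} $j$, i.e.\ $J = \{1,\dots,d\}$ --- the remaining variables can be carried along as passive parameters via Fubini, exactly as in the proof of Lemma~\ref{lem:supp_partial_STFT}.

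\textbf{The main step} is the genuinely one-dimensional statement: if $\Sigma_{11} = e^{i\alpha}$ with $\alpha \notin \{0,\pi\}$ and both $f\in\lt$ and $\F_{\Sigma_{11}}f$ have support of finite measure, then $f\equiv 0$. Here I would use the integral formula \eqref{eq:frac_integral}: since $\csc\alpha \neq 0$,
\begin{equation*}
    \F_{\Sigma_{11}}f(\omega) = k_{\Sigma_{11}}\, e^{\pi i \cot\alpha\, \omega^2}\int_\R \big(f(t)\, e^{\pi i \cot\alpha\, t^2}\big)\, e^{-2\pi i \csc\alpha\, \omega t}\, dt = k_{\Sigma_{11}}\, e^{\pi i \cot\alpha\, \omega^2}\, \widehat{h}(\csc\alpha\cdot\omega),
\end{equation*}
where $h(t) = f(t)\, e^{\pi i \cot\alpha\, t^2}$. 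Multiplication by the unimodular chirp $e^{\pi i \cot\alpha\, t^2}$ preserves supports, so $h$ has support of finite measure; the dilation $\omega \mapsto \csc\alpha\cdot\omega$ and the outer chirp $e^{\pi i\cot\alpha\,\omega^2}$ preserve finiteness of the measure of the support, so $\widehat h$ has support of finite measure as well. By the Benedicks--Amrein-Berthier theorem \cite{Benedicks1985, AmreinBerthier1977} applied to the pair $(h,\widehat h)$, $h\equiv 0$, hence $f\equiv 0$. This is essentially Jaming's observation \cite{Jaming2022} that the fractional Fourier transform is a chirp-dilation-conjugate of $\F$.

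\textbf{Assembling the multivariate case.} With the one-dimensional result in hand, I iterate over the coordinates. Suppose $f$ and $\F_\Sigma f$ both have support of finite measure. Apply Fubini in the first coordinate: for almost every $(x_2,\dots,x_d)$ the slice $f(\,\cdot\,,x_2,\dots,x_d)$ lies in $\lt$ and has support of finite measure in $\R$; the same holds, by Fubini applied to $\F_\Sigma f$ and because $\F_\Sigma$ acts only in the first variable on that slice (the later factors $\F_{\Sigma_{jj}}$, $j\geq 2$, commute past and are unitary), for $\F_{\Sigma_{11}}$ of that slice. Hence the one-dimensional result forces $f(\,\cdot\,,x_2,\dots,x_d)\equiv 0$ for a.e.\ $(x_2,\dots,x_d)$, so $f\equiv 0$. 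Alternatively, and perhaps more cleanly, one argues as in Lemma~\ref{lem:supp_partial_STFT}: the sections $\cS_{x_2,\dots,x_d}$ of $\supp\F_\Sigma f$ have finite measure for a.e.\ fixed tail, each such section equals $\supp \F_{\Sigma_{11}}(f_{x_2,\dots,x_d})$ up to a chirp/dilation, the one-dimensional theorem kills each slice, and Cavalieri's principle then gives $|\supp \F_\Sigma f| = 0$, whence $\F_\Sigma f\equiv 0$ and, since $\F_\Sigma$ is unitary, $f\equiv 0$.

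\textbf{Anticipated obstacle.} The only subtle point is bookkeeping: one must be careful that ``support of finite measure'' is preserved under each of the operations (multiplication by unimodular chirps, nonzero dilations, partial Fourier transforms interpreted in the Plancherel sense, and the passage to a.e.\ slices), and that the Fubini null sets for $f$ and for $\F_\Sigma f$ can be taken simultaneously. None of this is deep, but it is exactly the kind of measure-theoretic care that was already exercised in the proof of Lemma~\ref{lem:supp_partial_STFT}, and I would simply invoke that pattern rather than redo it. The genuinely mathematical content is the reduction \eqref{eq:frac_integral} $\rightsquigarrow$ $\widehat h$, which is immediate once one has the integral kernel of $\F_\sigma$.
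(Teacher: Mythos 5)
Your one-dimensional ``main step'' is correct and is in substance the paper's own argument (and Jaming's): for invertible $B=\im\Sigma$ the paper factors $\Ro_\Sigma=\V_{AB^{-1}}\J\D_{B^{-t}}\V_{B^{-1}A}$, which is exactly your chirp--dilation--chirp conjugation of $\F$, followed by the classical Benedicks--Amrein-Berthier theorem. The gap is in your ``assembling'' paragraph. Once you have reduced to the case $\Sigma_{jj}\notin\{\pm1\}$ for every $j$, the variables $2,\dots,d$ are \emph{not} passive: writing $\F_\Sigma=\F^{(1)}_{\Sigma_{11}}\circ\F^{(2\ldots d)}_{\Sigma'}$, the first-variable slice of $\F_\Sigma f$ at a fixed tail $\omega'$ equals $\F_{\Sigma_{11}}$ applied to the slice of $\F^{(2\ldots d)}_{\Sigma'}f$ at $\omega'$ --- not to the slice $f(\cdot,x')$ of $f$. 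Fubini therefore gives finite-measure support for the slices of $f$ and for $\F_{\Sigma_{11}}$ of the slices of $\F^{(2\ldots d)}_{\Sigma'}f$, which are slices of two \emph{different} functions; the one-dimensional theorem applies to neither pair. Saying that the later factors ``commute past and are unitary'' does not repair this: commuting them changes the function whose first-variable slices you are examining. Already for $\Sigma=\diag(i,i)$, i.e.\ the ordinary Fourier transform on $\R^2$, your iteration would purport to derive two-dimensional Benedicks from the one-dimensional one by slicing, and the same obstruction appears.

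The fix is to perform the chirp--dilation conjugation in \emph{all} nondegenerate coordinates at once: if $B=\im\Sigma$ is invertible, then $\F_\Sigma f(\omega)=k_\Sigma\, e^{\pi i\,\omega\cdot AB^{-1}\omega}\,\widehat{h}(B^{-1}\omega)$ with $h(t)=f(t)\,e^{\pi i\, t\cdot B^{-1}A\,t}$, and one invokes the \emph{multivariate} Benedicks--Amrein-Berthier theorem for the pair $(h,\widehat h)$ on $\rd$ (as stated in \cite{AmreinBerthier1977} and cited in the paper); no slicing is needed there. Fubini slicing is legitimate, and needed, only in the degenerate coordinates with $\Sigma_{jj}=\pm1$, where $\F_{\Sigma_{jj}}$ really is a reflection and genuinely passive --- this is what the paper means by considering restrictions to the range of $B$. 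With that correction your proof coincides with the paper's.
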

\begin{proof}
Assume $f$ and $\F_\Sigma f$ are supported on a set of finite measure. We want to show that $f$ is the zero function.

Let $\Sigma = A+iB$. If  $B$ is invertible, then 
$\Ro_\Sigma=
\begin{psmallmatrix}
  A & B \\ -B &A
\end{psmallmatrix}
$ is a  free symplectic matrix with a factorization
\begin{equation}
    \Ro_\Sigma = \V_{AB^{-1}} \D_B \J \V_{B^{-1}A} 
    =  \V_{AB^{-1}} \J \D_{B^{-t}}\V_{B^{-1}A} \, ,
  \end{equation}
  according to Proposition~\ref{prop:free_Sp_decomposition}. 
% In this case, the approach in \cite{Jaming2022} is applicable.
By \eqref{eq:standard_metaplectic}, $\mu(V_Q)$ and $\mu(D_L)$ have no substantial effect on the support, in particular, both $f_1 = \mu(\D_{B^{-t}}\V_{B^{-1}A}) f$ and 
\begin{equation}
    \mu(\V_{AB^{-1}})^{-1} \F_\Sigma f = \F \D_{B^{-t}}\V_{B^{-1}A} f = \F f_1
\end{equation}
are supported on a set of finite measure. By the classical Benedicks's uncertainty principle \cite{Benedicks1985,AmreinBerthier1977}, $f_1$ is the zero function, thus $f = \mu(\D_{B-1}\V_{B^{-t}A})^{-1}f_1\equiv 0$.

If $B$ is not invertible, then  we  consider restrictions $f_x$ to the range of $B$ analogously to the proof of Theorem \ref{thm:new_uncertainty_free}.
\end{proof}
In essence, this proof is the same as Jaming's~\cite{Jaming2022}. Our
contribution is to  make the underlying factorization of metaplectic
operators explicit and visible and then use it for other \up s. 
The numerous \up s for the fractional Fourier transform and the
canonical linear transforms all work with the explicit
formula~\eqref{eq:frac_integral}. Predictably, the proofs tend to become complicated
quickly, and so different methods seem preferable.  % Our proof is motivated by and extends  a fundamental  observation
% by  Jaming~\cite{Jaming2022} who already showed  that the structural
% properties of metaplectic operators are essential for
% a deeper analysis of  \up s. 

\section{General versions of Benedicks's uncertainty principle}\label{sec:reductions}
In this section,
we provide a criterion for the validity of an uncertainty principle
for $\W_\bA(f,g)$.  % which can be easily checked from the blocks of the associated symplectic matrix.
Technically, we will reduce the general case to the special case of free symplectic matrices. % \reversemarginpar\marginnote{Why is Lemma \ref{lem:reduce_ortho} an \emph{iff-statement}, and Lemma \ref{lem:reduce_free} is (i) $\Leftrightarrow$ (ii)? I'd like to have the same formatting, please choose. Otherwise I'll take (i) $\Leftrightarrow$ (ii)}[20pt]

\begin{lemma}\label{lem:reduce_free}
    Let $\bRo_U\in \Sp $ % \cap \mathrm{O}(4d,\R)$
    for some $U\in\mathrm{U}(2d,\C)$. 
    Then the following are equivalent.
    \begin{enumerate}[(i)]
        \item There exist non-zero $f,g\in \ltd$ with %, $f\not\equiv 0$, $g\not\equiv 0$, with
    $$
    |\supp\, \W_{\bRo_U}(f,g)| <\infty.
    $$
    \item For all $\tau\in\T$
    % equivalently for all $\tau \in\T$, 
    there exist non-zero $f_\tau,g_\tau\in \ltd$ with %, $f_\tau\not\equiv 0$, $g_\tau\not\equiv 0$, with 
    $$
    |\supp\, \W_{\bRo_{\tau U}}(f_\tau,g_\tau)| <\infty.
    $$
    \end{enumerate}
\end{lemma}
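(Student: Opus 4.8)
The plan is to reduce everything to the pointwise identity
\[
  \bigl|\W_{\bRo_{\tau U}}(f,g)\bigr|
  = \bigl|\W_{\bRo_U}\bigl(\F_{\tau I_d}f,\ \F_{\bar\tau I_d}g\bigr)\bigr|,
  \qquad f,g\in\ltd,\ \tau\in\T,
\]
which expresses that replacing $\bRo_U$ by $\bRo_{\tau U}$ amounts to nothing more than a unitary change of the arguments $f,g$. Granting this, the equivalence is immediate: $(ii)\Rightarrow(i)$ is the case $\tau=1$; and for $(i)\Rightarrow(ii)$, given non-zero $f,g$ with $\W_{\bRo_U}(f,g)$ of finite-measure support and given $\tau\in\T$, one sets $f_\tau:=\F_{\tau I_d}^{-1}f$ and $g_\tau:=\F_{\bar\tau I_d}^{-1}g$, which are non-zero because $\F_{\tau I_d},\F_{\bar\tau I_d}$ are unitary, and the identity gives $\supp\W_{\bRo_{\tau U}}(f_\tau,g_\tau)=\supp\W_{\bRo_U}(f,g)$, a set of finite measure.

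To establish the identity I would argue as follows. Since $\tau U\in\mathrm{U}(2d,\C)$, we have $\bRo_{\tau U}\in\Sp$, and by the homomorphism property of $\Ro$ (Lemma~\ref{rem:Sp_rechenregel}(iv), in the form already used in Lemma~\ref{lemma:tau-expansion}) we may write $\bRo_{\tau U}=\bRo_U\bRo_{\tau I}$, where $\bRo_{\tau I}=\bRo_{\tau I_{2d}}$ is exactly the image of the pair $(\Ro_{\tau I_d},\Ro_{\tau I_d})$ under the embedding $\Spp\times\Spp\hookrightarrow\Sp$ of Remark~\ref{rem:embedding}. Since the oscillator representation is only projective, $\mu(\bRo_{\tau U})$ and $\mu(\bRo_U)\mu(\bRo_{\tau I})$ coincide up to a unimodular constant, which is irrelevant for absolute values and supports. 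By Remark~\ref{rem:embedding},
\[
  \mu(\bRo_{\tau I})(f\otimes\bar g)=\mu(\Ro_{\tau I_d})f\otimes\mu(\Ro_{\tau I_d})\bar g=\F_{\tau I_d}f\otimes\F_{\tau I_d}\bar g,
\]
and the conjugation rule \eqref{eq:frac_inv_rule} for fractional Fourier transforms rewrites the second factor as $\F_{\tau I_d}\bar g=c_{\tau I_d}\,\overline{\F_{\bar\tau I_d}g}$. Collecting these, $\W_{\bRo_{\tau U}}(f,g)$ equals, up to a unimodular constant,
\[
  \mu(\bRo_U)\bigl((\F_{\tau I_d}f)\otimes\overline{\F_{\bar\tau I_d}g}\bigr)=\W_{\bRo_U}\bigl(\F_{\tau I_d}f,\F_{\bar\tau I_d}g\bigr),
\]
which is the asserted identity.

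I do not expect a genuine obstacle here; the only delicate points are bookkeeping ones, namely the projective phase ambiguity of $\mu$ — harmless since the statement only involves $|\W|$ and $\supp\W$ — and the need to commute $\mu(\bRo_{\tau I})$ past the complex conjugation in the second slot of $f\otimes\bar g$, which is precisely what \eqref{eq:frac_inv_rule} handles. It is worth noting that, in contrast with the companion reductions of this section, neither the freeness of $\bRo_{\tau U}$ nor the finiteness of the exceptional set of $\tau$ from Lemma~\ref{lemma:tau-expansion} enters this argument.
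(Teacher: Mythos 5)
Your proof is correct and takes essentially the same route as the paper: both factor out $\bRo_{\tau I}$ using the homomorphism property, apply the tensorization of Remark~\ref{rem:embedding} together with the conjugation rule \eqref{eq:frac_inv_rule}, and absorb the resulting unitary fractional Fourier transforms into the arguments $f,g$. The only difference is the immaterial direction of the identity (the paper starts from $|\W_{\bRo_U}(f,g)|$ and writes $\bRo_U=\bRo_{\tau U}\bRo_{\overline{\tau}I}$, arriving at the same choice of $f_\tau,g_\tau$ up to a phase).
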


\begin{proof}
    Let $\tau \in\T$. Then for all $f,g \in \ltd$ holds
    \begin{align}
        |\W_{\bRo_U}(f,g)| 
        & =  |\mu(\bRo_{\tau U}) \mu(\bRo_{\overline{\tau} I}) (f\otimes \overline{g})| \\ 
        & \overset{\eqref{eq:_frac_tensor}}{\underset{\eqref{eq:frac_inv_rule}}{=}}| \mu(\bRo_{{\tau} U})  (\mu(\Ro_{\overline{\tau} I}) f\otimes \overline{\mu(\Ro_{\tau I}) g})|.
    \end{align}
    The equivalence follows by choosing 
    $$
    f_\tau = \mu(\Ro_{\overline{\tau} I}) f,\quad g_\tau = \mu(\Ro_{\tau I}) g.
   \qedhere $$ 
\end{proof}

The characterization of $\W_\bA$, $\bA \in\Sp$, which obey Benedicks's uncertainty principle is a direct consequence of Theorem \ref{thm:new_uncertainty_free}, Lemma \ref{lem:reduce_ortho}, and Lemma \ref{lem:reduce_free}. 

\begin{theorem}\label{thm:main_up_iwasawa}
Let  
    $\bA \in\mathrm{Sp}(4d,\R)$ 
    be a  symplectic matrix with pre-Iwasawa decomposition 
    \begin{equation}
        \bA = \bV_Q \bD_L \bRo_{U}.
    \end{equation}
    Let $\tau \in\T$ such that $\im(\tau U)$ is invertible. 
        If $P = \im(\tau U)^{-1} \re(\tau U)$ is not block-diagonal, then  Benedicks's uncertainty principle holds:
        \begin{center}
If $ \mathrm{W}_\bA (f,g)$ is supported on a set of finite measure for some $f,g\in\ltd$, \\
    then $f\equiv 0$ or $g\equiv 0$.
\end{center}
\end{theorem}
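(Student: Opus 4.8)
The plan is to chain together the three earlier reduction results into a short argument. Given $\bA\in\Sp$ with pre-Iwasawa decomposition $\bA=\bV_Q\bD_L\bRo_U$, I would first invoke Lemma~\ref{lem:reduce_free} with $B=\bRo_U$ to observe that $\W_\bA(f,g)$ has support of finite measure for some non-zero $f,g$ if and only if $\W_{\bRo_U}(f,g)$ does (for some non-zero pair). Then I would apply Lemma~\ref{lem:reduce_free} to replace $U$ by $\tau U$: the existence of a non-zero pair making $\supp\,\W_{\bRo_U}(f,g)$ of finite measure is equivalent to the existence of a non-zero pair making $\supp\,\W_{\bRo_{\tau U}}(f_\tau,g_\tau)$ of finite measure, for \emph{every} $\tau\in\T$, in particular for the chosen $\tau$ with $\im(\tau U)$ invertible.

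Having reduced to $\bRo_{\tau U}$ with $\im(\tau U)$ invertible, the matrix $\bRo_{\tau U}=\begin{psmallmatrix}\re(\tau U) & \im(\tau U)\\ -\im(\tau U) & \re(\tau U)\end{psmallmatrix}$ is a \emph{free} symplectic matrix (its upper-right block $\im(\tau U)$ is invertible). By Proposition~\ref{prop:free_Sp_decomposition}, it factors as $\bRo_{\tau U}=\bV_{\re(\tau U)\im(\tau U)^{-1}}\,\bD_{\im(\tau U)}\,\bJ\,\bV_{P}$ with $P=\im(\tau U)^{-1}\re(\tau U)$, which is symmetric. Applying Lemma~\ref{lem:reduce_ortho} strips off the $\bV_Q$ and $\bD_L$ factors, so $\W_{\bRo_{\tau U}}(f_\tau,g_\tau)$ has support of finite measure if and only if $\W_{\bJ\bV_P}(f_\tau,g_\tau)$ does. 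Now the hypothesis that $P$ is not block-diagonal is exactly the hypothesis of Theorem~\ref{thm:new_uncertainty_free}, which yields $f_\tau\equiv 0$ or $g_\tau\equiv 0$.

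Finally I would unwind the equivalences. Since $f_\tau$ and $g_\tau$ are obtained from $f$ and $g$ by applying the unitary operators $\mu(\Ro_{\bar\tau I})$ and $\mu(\Ro_{\tau I})$ (together with the bookkeeping from Lemma~\ref{lem:reduce_free}), the conclusion $f_\tau\equiv 0$ or $g_\tau\equiv 0$ forces $f\equiv 0$ or $g\equiv 0$. Hence if $\W_\bA(f,g)$ is supported on a set of finite measure, one of $f,g$ vanishes, which is the claimed Benedicks-type uncertainty principle.

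There is no serious obstacle here: the theorem is essentially a corollary, and the only point requiring a little care is the logical direction of the equivalences in Lemma~\ref{lem:reduce_free} and Lemma~\ref{lem:reduce_ortho} — one must run them contrapositively, i.e., ``no uncertainty principle for $\bA$'' $\Longleftrightarrow$ ``no uncertainty principle for $\bRo_U$'' $\Longleftrightarrow$ ``no uncertainty principle for $\bRo_{\tau U}$'' $\Longleftrightarrow$ ``no uncertainty principle for $\bJ\bV_P$'', and then negate. One should also note explicitly that the chosen $\tau$ exists (Lemma~\ref{lemma:tau-expansion}), though the statement already presupposes it. All the heavy lifting — the partial STFT machinery and Benedicks's theorem for the STFT — is hidden inside Theorem~\ref{thm:new_uncertainty_free}, so the proof itself is just a two-line assembly.
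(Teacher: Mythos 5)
Your proof is correct and follows essentially the same route as the paper: strip off $\bV_Q\bD_L$ via Lemma~\ref{lem:reduce_ortho}, pass to the free matrix $\bRo_{\tau U}$ via Lemma~\ref{lem:reduce_free}, factor it with Proposition~\ref{prop:free_Sp_decomposition}, strip again, and apply Theorem~\ref{thm:new_uncertainty_free}. The only blemish is that your first reduction (removing $\bV_Q\bD_L$) is an application of Lemma~\ref{lem:reduce_ortho}, not Lemma~\ref{lem:reduce_free} as you cite it there.
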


\begin{proof}
Assume that $\mathrm{W}_{\bA} (f_0,g_0)$ with non-zero $f_0,g_0\in L^2(\rd )$ has a
support of finite measure. Then $\mathrm{W}_{\bRo _U} (f_0,g_0)$ has 
also  a support of finite measure by Lemma~\ref{lem:reduce_ortho}. 

Now choose $\tau \in \bT $ such that $\bRo _{\tau U}$ is a free
symplectic matrix; this is possible by
Lemma~\ref{lemma:tau-expansion}. Then by Lemma~\ref{lem:reduce_free} there exist
non-zero $f , g \in L^2(\rd )$, such that $\W_{\bRo_{\tau U}}(f,g)$ is supported on a set of finite
    measure.  
  % By Lemma \ref{lem:reduce_ortho}, it suffices to show that there
  %   exist no $f,g\in\ltd$, $f\not\equiv 0$, $g\not\equiv 0$, such that
  %   $\W_{\bRo_U}(f,g)$ is supported on a set of finite
  %   measure. Furthermore, by Lemma \ref{lem:reduce_free}, it suffices
  %   to show that $\W_{\bRo_{\tau U}}(f,g)$ does not have support of
  %   finite measure unless $f$ or $g$ are the zero function.
    Set $A=\re
    (\tau U)$ and $B=\im (\tau U)$, and $P=B^{-1}A$. 
    
    By Proposition \ref{prop:free_Sp_decomposition}, 
    \begin{equation}
        \bRo_{\tau U} =A+iB= \bV_{AB^{-1}} \bD_{B} \bJ  \bV_{B^{-1}A}.
    \end{equation}
    % \begin{equation}
    %     \bRo_{\tau U} =\bV_{\re(\tau U) \im(\tau U)^{-1}} \bD_{\im(\tau U)} \bJ  \bV_{P}.
    % \end{equation}
    By Lemma \ref{lem:reduce_ortho} again, % $\W_{\bRo_{\tau U}}(f,g)$ is
    % supported on a set of finite measure if and only if
     $\W_{\bJ
      \bV_P}(f,g)$ is also  supported on a set of finite measure. Since
    $P=B\inv A$ is not block-diagonal, we can apply  Theorem
    \ref{thm:new_uncertainty_free} and conclude that % . Thus,  if the support of $\W_{\bJ
      % \bV_P}(f,g)$ has finite measure, then
    $f\equiv 0$ or $g\equiv
    0$. Consequently, the same
    conclusion holds for $W_{\bA}(f,g)$ as claimed. 
  \end{proof}

  A special case of Benedicks's \up\ was already obtained
  in~\cite{BayerPhD} for the   class of metaplectic \tf\
  representations of the form $\mathrm{W} (f,g) = \mathcal{F}_2 \bD
  _{L} (f\otimes \bar{g})= \bRo _{\diag (I,iI)} \bD _L (f\otimes \bar{g})$ for $L=
  \begin{psmallmatrix}
    A & B \\ C&D
  \end{psmallmatrix}$ with invertible blocks $B$ and $D$. These \tfr s
  can be related directly to the \stft\ and require less work.

By Lemma \ref{lemma:tau-expansion}, there always exists a $\tau \in
\T$ such that $\im(\tau U)$ is invertible, so Theorem
\ref{thm:main_up_iwasawa} can be applied to every symplectic matrix. 
However, for  an arbitrary  symplectic matrix $\bA \in\Sp$, it is not
obvious how to check the assumptions of Theorem~\ref{thm:main_up_iwasawa} directly for
$\bA $. In the following we elaborate on this question and will
present an algorithm to decide whether or not a Benedicks-type \up\
holds for $W_{\bA }$. 
% %how to recognize 
% whether it can be related through the reductions above to a free symplectic matrix of the form $\bJ \bV_{P}$ with a block-diagonal symmetric matrix $P\in\R^{2d\times 2d}$ or not. Thus we revisit the decomposition of free orthogonal symplectic matrices in Proposition \ref{prop:free_Sp_decomposition}, i.e., 
% $$
% \bA = \bRo_U= \bV_{\re(U)\im(U)^{-1}} \bD_{\im(U)} \bJ \bV_P, \quad P = \im(U)^{-1}\re(U).
% $$
% We first characterize when $P$ is block-diagonal in terms of the joint SVD decomposition of the real and the imaginary part of $U$, or equivalently (Lemma \ref{lem:diagonal_P_tau_free}).
% The derived characterization is invariant under multiplication with $\tau I$.
% By Lemma \ref{lemma:tau-expansion}, this provides a characterization of the orthogonal symplectic matrices $\bA \in \Spp$ such that 
% \begin{equation}
%     \bA = \bV_{\re(\tau U)\im(\tau U)^{-1}} \bD_{\im(\tau U)} \bJ \bV_{P} \bRo_{\overline{\tau}I}, \quad P = \im(\tau U)^{-1} \re(\tau U),
% \end{equation}
% with a block-diagonal symmetric $P\in \R^{d\times d}$
% for some $\tau\in \T$.

\begin{lemma}\label{lem:diagonal_P_tau_free}     Let $U = A+iB\in\mathrm{U}(2d,\C)$ be a unitary matrix with an invertible imaginary part $B\in\mathrm{GL}(2d,\R)$ and let $P = B^{-1}A$. Then the following are equivalent.
    \begin{enumerate}[(i)]
    \item 
    There exist symmetric matrices $P_{1}, P_{2} \in\R^{d\times d}$
    such that $P$ is block-diagonal, 
    $$
    P =\diag(P_{1}, P_{2}).
    $$
        \item There exist orthogonal matrices $W\in\mathrm{O}(2d,\R)$,
          $V_1, V_2\in\mathrm{O}(d,\R)$,  and diagonal unitary matrices $
     \Sigma_1,\Sigma_2 \in\mathrm{U}(d,\C)$ with 
     \begin{equation}\label{eq:lem_block_char}
        U = W\cdot \diag(\Sigma_1,\Sigma_2) \cdot \diag(V_1,V_2)^t = W\cdot \diag(\Sigma_1 V_1^t, \Sigma_2 V_2^t).
    \end{equation}
    \item The unitary matrix $U^t U$ is block-diagonal.
    \end{enumerate}
\end{lemma}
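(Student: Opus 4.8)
The plan is to prove the cycle (i)$\,\Rightarrow\,$(ii)$\,\Rightarrow\,$(iii)$\,\Rightarrow\,$(i), with the whole argument resting on an explicit description of $U$ in terms of $P$. The first step is to record two algebraic identities. Since $B$ is invertible, writing $A = BP$ and unpacking $U^{*}U = I$ and $UU^{*} = I$ (which give $A^{t}A + B^{t}B = I$, $A^{t}B = B^{t}A$, and $BA^{t} = AB^{t}$) shows that $P = B^{-1}A$ is symmetric, that $P$ commutes with $B^{t}B$, and that
\begin{equation*}
  B^{t}B = (P^{2}+I)^{-1}.
\end{equation*}
Taking a polar decomposition $B = R\,(B^{t}B)^{1/2}$ with $R\in\mathrm{O}(2d,\R)$ and observing that $(B^{t}B)^{1/2} = (P^{2}+I)^{-1/2}$ commutes with $P$, one obtains
\begin{equation*}
  U = R\,(P+iI)(P^{2}+I)^{-1/2}
  \qquad\text{and hence}\qquad
  U^{t}U = (P+iI)^{2}(P^{2}+I)^{-1} = (P-iI)^{-1}(P+iI),
\end{equation*}
where the last identity uses only that every factor is a polynomial in $P$; it inverts to $P = i\,(U^{t}U+I)(U^{t}U-I)^{-1}$ since $U^{t}U - I = 2i\,(P-iI)^{-1}$ is invertible.

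Granting these, the three implications are short. For (i)$\,\Rightarrow\,$(ii): if $P = \diag(P_{1},P_{2})$ with symmetric $P_{j}$, then $(P+iI)(P^{2}+I)^{-1/2} = \diag(M_{1},M_{2})$ with each $M_{j} = (P_{j}+iI_{d})(P_{j}^{2}+I_{d})^{-1/2}\in\mathrm{U}(d,\C)$; applying Lemma~\ref{lem:unitary_decomp} to $M_{j}$ to write $M_{j} = W_{j}\Sigma_{j}V_{j}^{t}$ and setting $W := R\,\diag(W_{1},W_{2})\in\mathrm{O}(2d,\R)$ yields $U = W\,\diag(\Sigma_{1},\Sigma_{2})\,\diag(V_{1},V_{2})^{t}$, which is precisely \eqref{eq:lem_block_char}. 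For (ii)$\,\Rightarrow\,$(iii): from $U = W\,\diag(\Sigma_{1},\Sigma_{2})\,\diag(V_{1},V_{2})^{t}$ and $W^{t}W = I$ one computes $U^{t}U = \diag\big(V_{1}\Sigma_{1}^{2}V_{1}^{t},\,V_{2}\Sigma_{2}^{2}V_{2}^{t}\big)$, which is block-diagonal. For (iii)$\,\Rightarrow\,$(i): the block-diagonal $2d\times 2d$ matrices with $d\times d$ blocks form an algebra that is closed under inversion, so block-diagonality of $U^{t}U$ propagates through $P = i\,(U^{t}U+I)(U^{t}U-I)^{-1}$; since $P$ is symmetric, its two diagonal blocks are symmetric, which is the decomposition asserted in (i).

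The only step with real content is the derivation of the two displayed identities, after which everything is bookkeeping about block-diagonal matrices. I therefore expect the main (and fairly mild) obstacle to be the commutation housekeeping there: one must verify that $B^{t}B$, and thus $(P^{2}+I)^{-1/2}$, commutes with $P$, so that the orthogonal polar factor $R$ can be extracted on the left and $U^{t}U$ collapses to the Cayley form $(P-iI)^{-1}(P+iI)$. (If one prefers to avoid polar decompositions, the same identity for $U^{t}U$ drops out of expanding $U^{t}U = (A^{t}+iB^{t})(A+iB)$ directly with $A = BP$.)
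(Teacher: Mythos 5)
Your proof is correct, and its core construction coincides with the paper's: for (i)$\Rightarrow$(ii) both arguments reduce $U$ to the canonical unitary $(P+iI)(P^2+I)^{-1/2}$ times an orthogonal factor (your polar factor $R=B(B^tB)^{-1/2}$ is exactly the paper's $W'=BB'^{-1}$ with $B'=(I+P^2)^{-1/2}$), followed by a blockwise application of Lemma~\ref{lem:unitary_decomp}; and (ii)$\Rightarrow$(iii) is the same one-line computation in both. Where you genuinely diverge is (iii)$\Rightarrow$(i): the paper expands $U^tU=I-2B^tB+2iB^tA$, reads off that $B^tB$ and $B^tA$ are block-diagonal, and writes $P=(B^tB)^{-1}(B^tA)$; you instead package everything into the Cayley-type identity $U^tU=(P-iI)^{-1}(P+iI)$ and invert it to $P=i\,(U^tU+I)(U^tU-I)^{-1}$, so block-diagonality transfers automatically through the algebra of block-diagonal matrices. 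Your route costs slightly more commutation bookkeeping up front ($P$ symmetric, $P$ commuting with $B^tB$, and $B^tB=(P^2+I)^{-1}$, all of which do follow correctly from $U^*U=UU^*=I$), but it buys an explicit bijection between $P$ and $U^tU$ that makes the equivalence of (i) and (iii) transparent in both directions at once, whereas the paper's version is marginally shorter for the single implication it needs.
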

\begin{proof}
    We will show the chain of implications (ii) $\Rightarrow$ (iii) $\Rightarrow$ (i) $\Rightarrow$ (ii).

    A simple computation shows that (ii) implies (iii).

    % We now show that (iii) implies (i).
 (iii) $ \Rightarrow  $ (ii)     Since $U = A+iB$ is unitary, its real and  imaginary parts satisfy
    \begin{equation}
        I = \re(U^* U) = A^t A +B^tB\qquad \text{and}\qquad 0 = \im(U^*U)=A^tB-B^tA.
    \end{equation}
    Then  $B^tA$ is symmetric  and 
    \begin{equation}
        \begin{split}
            U^tU = A^tA-B^tB+ i(A^tB+B^tA) = I-2B^tB+2iB^tA.
        \end{split}
    \end{equation}
    By assumption, $U^tU$ is block-diagonal, thus both  its real part $I-2B^tB$ and its imaginary part $2B^tA$ are block-diagonal. Therefore, $B^tB$ is block-diagonal, and invertible by assumption on $B$.
    From this, we conclude that
    \begin{equation}
        P = B^{-1}A = B^{-1}B^{-t}B^t A = (B^tB)^{-1}\,(B^tA)
    \end{equation}
    is block-diagonal and symmetric  as a product of block-diagonal,
    symmetric  matrices.

    % We conclude with the implication 
    (i) $\Rightarrow$ (ii). Since the matrix $P=B^{-1}A$ is symmetric, the matrix $I+P^2$ is positive definite, and the following matrix is well-defined, with invertible imaginary part:
    \begin{equation}
        U' = (I+P^2)^{-1/2}(P+iI) = A'+iB'.
    \end{equation}
    The matrix $U'$ is also unitary, because  $U'U'^*= U'^* U' = I$.
    Furthermore, %since  $        A' = (I+P^2)^{-1/2} P$ and $  B'= (I+P^2)^{-1/2}$,
    % \begin{equation}
    %     A' = (I+P^2)^{-1/2} P\qquad\text{and}\qquad
    %     B'= (I+P^2)^{-1/2},
    % \end{equation}
    we also have  $B'^{-1}A' = (1+P^2)^{1/2} [ (1+P^2)^{-1/2} P] =  P$.
    Since $B,B'$ are invertible, $W'= B B'^{-1}$ is invertible, % \in \mathrm{GL}(d,\R)$
    and consequently,
    \begin{equation}
        A = BP = W'B'P = W'A'.
    \end{equation}
    Thus $U = W' U'$. By assumption, $P$ is block-diagonal, hence $U'$
    is block-diagonal as well.  
    By Lemma \ref{lem:unitary_decomp}, there exist matrices $W_1,W_2,V_1,V_2\in\mathrm{O}(d,\R)$ and diagonal matrices $\Sigma_1,\Sigma_2 \in\mathrm{U}(d,\C)$ such that 
    \begin{equation}
        U ' = \begin{pmatrix}
            W_1 & 0 \\ 0 & W_2 
        \end{pmatrix}\cdot \begin{pmatrix}
            \Sigma_1 & 0 \\ 0 & \Sigma_2
        \end{pmatrix} 
        \cdot
        \begin{pmatrix}
            V_1 & 0 \\ 0 & V_2
        \end{pmatrix}
    \end{equation}
    We set 
    \begin{equation}
        W = W'\cdot \diag(W_1,W_2)
    \end{equation}
    to obtain the claimed factorization.
\end{proof}

We now formulate the main result.

\begin{theorem}\label{thm:new_uncertainty}     Let  
    $\bA \in\mathrm{Sp}(4d,\R)$ 
    be a symplectic matrix with pre-Iwasawa decomposition 
    \begin{equation}
        \bA = \bV_Q \bD_L \bRo_{U}.
    \end{equation}
    Then the following alternative holds.
    \begin{enumerate}[(i)]
        \item 
        If $U^t U$ is not block-diagonal, then Benedicks's uncertainty principle holds:
        \begin{center}
    If $ \mathrm{W}_\bA (f,g)$ is supported on a set of finite measure for some $f,g\in\ltd$, \\
    then $f\equiv 0$ or   $g\equiv 0$.
\end{center}
        \item If $U^t U$ is block-diagonal, then there exist non-zero $f,g\in\ltd$, % $f\not\equiv 0$, $g\not\equiv 0$,
          such that $\W_\bA(f,{g})$ is supported on a compact set, in particular, on a set of finite measure.
    \end{enumerate}
\end{theorem}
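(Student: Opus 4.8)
The plan is to assemble the theorem from the machinery already in place: Theorem~\ref{thm:main_up_iwasawa} supplies the positive direction once its hypothesis is met, Lemma~\ref{lem:diagonal_P_tau_free} lets us re-express that hypothesis in the intrinsic, $\tau$-free form ``$U^tU$ block-diagonal'', and Lemma~\ref{lemma:tau-expansion} disposes of the case $\im U$ non-invertible by a unimodular rotation. The one conceptual observation that makes everything fit is that the block-diagonal status of $U^tU$ is unaffected by replacing $U$ with $\tau U$, $\tau\in\T$, since $(\tau U)^t(\tau U)=\tau^2\,U^tU$ and $\tau^2\neq 0$; this is exactly what allows us to pass back and forth between $U$ and a free rotate.

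For part (i), I would first use Lemma~\ref{lemma:tau-expansion} to choose $\tau\in\T$ with $\im(\tau U)$ invertible. By the observation above, $(\tau U)^t(\tau U)=\tau^2 U^tU$ is not block-diagonal, so applying Lemma~\ref{lem:diagonal_P_tau_free} to the unitary $\tau U$ (whose imaginary part is invertible) and using the equivalence (i)$\Leftrightarrow$(iii) there, the matrix $P:=\im(\tau U)^{-1}\re(\tau U)$ is not block-diagonal. Theorem~\ref{thm:main_up_iwasawa} then gives Benedicks's uncertainty principle for $\W_\bA$ word for word.

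For part (ii), again choose $\tau\in\T$ with $\im(\tau U)$ invertible; now $(\tau U)^t(\tau U)=\tau^2 U^tU$ is block-diagonal, so Lemma~\ref{lem:diagonal_P_tau_free}, implication (iii)$\Rightarrow$(ii), provides $W\in\mathrm{O}(2d,\R)$ and unitaries $U_1,U_2\in\mathrm{U}(d,\C)$ with $\tau U=W\,\diag(U_1,U_2)$; multiplying by $\bar\tau$ yields $U=W\,\diag(\bar\tau U_1,\bar\tau U_2)=:W\,\diag(U_1',U_2')$ with $U_1',U_2'$ still unitary. By Lemma~\ref{rem:Sp_rechenregel}(i), $\bRo_W=\bD_W$, and the homomorphism property gives $\bRo_U=\bD_W\,\bRo_{\diag(U_1',U_2')}$. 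Using Remark~\ref{rem:embedding} to split the action on elementary tensors, I would then fix non-zero, compactly supported $f_0,g_0\in L^2(\rd)$ and set $f=\mu(\Ro_{U_1'})^{-1}f_0$ and $g=\overline{\mu(\Ro_{U_2'})^{-1}g_0}$, which are non-zero since metaplectic operators are unitary and conjugation is a bijection. Then $\mu(\bRo_{\diag(U_1',U_2')})(f\otimes\bar g)=f_0\otimes g_0$ up to a phase, and applying the coordinate change $\mu(\bD_W)$ and then $\mu(\bD_L)$ (another coordinate change) and $\mu(\bV_Q)$ (multiplication by a unimodular chirp) keeps the support compact; hence $\W_\bA(f,g)=\mu(\bV_Q)\mu(\bD_L)\mu(\bD_W)(f_0\otimes g_0)$ is compactly supported, proving the claim.

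I do not expect a serious obstacle here: the analytic heart of the matter (the uncertainty principle for the partial \stft\ and its consequence Theorem~\ref{thm:new_uncertainty_free}, together with the singular value decomposition of orthogonal symplectic matrices feeding Lemma~\ref{lem:diagonal_P_tau_free}) is already encapsulated in the cited results. The only points that need genuine care are bookkeeping: verifying that the block-diagonal status of $U^tU$ is a scaling invariant so the $\tau$-reduction is harmless, and, in part (ii), checking that all metaplectic factors ($\mu(\bD_L)$, $\mu(\bV_Q)$, $\mu(\bD_W)$) and the conjugation preserve compactness of support while leaving the witnesses $f,g$ non-trivial.
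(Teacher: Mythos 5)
Your proposal is correct and follows essentially the same route as the paper: part (i) is verbatim the paper's reduction via the invariance of block-diagonality under $U\mapsto\tau U$, Lemma~\ref{lemma:tau-expansion}, Lemma~\ref{lem:diagonal_P_tau_free}\,(i)$\Leftrightarrow$(iii), and Theorem~\ref{thm:main_up_iwasawa}; part (ii) uses the same factorization $U=W\cdot\diag(U_1',U_2')$ and tensorization. The only (harmless) cosmetic difference is in the choice of the witness $g$: you put the complex conjugation outside, $g=\overline{\mu(\Ro_{U_2'})^{-1}g_0}$, whereas the paper takes $g=\mu(\Ro_{\overline{\Sigma_2}V_2^t})^{-1}g_0$ and absorbs the conjugation via the rule \eqref{eq:frac_inv_rule}.
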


\begin{proof} 
  (i)   The \up \ is a direct consequence of Theorem
    \ref{thm:main_up_iwasawa} and Lemma
    \ref{lem:diagonal_P_tau_free}. To see this, let $f,g\in \ltd$ such
    that $\W_\bA(f,g)$ is supported on a set of finite measure. %We
                                %aim to show that $f\equiv 0$ or
                                %$g\equiv 0$. 

    By Lemma \ref{lemma:tau-expansion}, there exists a $\tau \in \T$ such that $\im(\tau U)$ is invertible. By assumption, the matrix
    \begin{equation}
        (\tau U)^t (\tau U) = \tau^2 U^tU
    \end{equation}
    is not-block-diagonal. Consequently, 
    by Lemma \ref{lem:diagonal_P_tau_free}, the symmetric matrix $P =
    \im(\tau U)^{-1}\re(\tau U)$ is not block-diagonal either. Theorem
    \ref{thm:main_up_iwasawa} now implies that $f\equiv 0$ or $g\equiv
    0$. 
    
(ii)     We now prove the second statement. If $U^t U$ is block-diagonal, so is the matrix $(\tau U)^t (\tau U) = \tau^2 U^tU$ for any $\tau \in\T$.
Using  Lemma \ref{lemma:tau-expansion}, we can choose  $\tau\in\T$
    such that $\im(\tau U)$ is invertible and apply  Lemma
    \ref{lem:diagonal_P_tau_free} to write $\tau U$ as   
     \begin{equation}
        \tau U = % W\cdot \diag(\Sigma_{1,\tau},\Sigma_{2,\tau}) \cdot
        % \diag(V_1,V_2)^t =
        W\cdot \diag(\Sigma_{1,\tau} V_1^t,
        \Sigma_{2,\tau} V_2^t) 
      \end{equation}
    for some  $W\in\mathrm{O}(2d,\R)$, $V_1, V_2\in\mathrm{O}(d,\R)$ and diagonal unitary matrices $
     \Sigma_{1,\tau},\Sigma_{2,\tau} \in\mathrm{U}(d,\C)$. 
    Set $\Sigma_1 = \overline{\tau}\Sigma_{1,\tau}$ and $\Sigma_2 = \overline{\tau}\Sigma_{2,\tau}$, then
    \begin{equation}
        U = % W\cdot \diag(\Sigma_{1},\Sigma_{2}) \cdot \diag(V_1,V_2)^t =
        W\cdot \diag(\Sigma_1 V_1^t, \Sigma_2 V_2^t).
    \end{equation}
    By Remark \ref{rem:embedding}, 
    for all $f,g\in\ltd$, 
    \begin{align}
        |\W_\bA(f,{g})| 
        %\\
        & = |\mu(\bV_Q)\mu(\bD_L)\mu(\bD_W)\mu(\bRo_{\diag(\Sigma_1 V_1^t, \Sigma_2 V_2^t)}) \, (f\otimes \overline{g})|\\
        & = |\det L|^{-1/2}\,|(\mu(\Ro_{\Sigma_1 V_1^t})f\otimes \overline{\mu(\Ro_{\overline{\Sigma_2} V_2^t})g})(W^t L^{-1}\, \cdot \,)|.
    \end{align}
 Choose non-zero  $f_0,g_0\in\ltd$ with compact support, then % , $f\not\equiv 0$,
   % $g\not\equiv 0$,
 the functions
    $$
    f = \mu(\Ro_{\Sigma_1 V_1^t})^{-1}f_0 
   \quad \text{ and } \quad   g = \mu(\Ro_{\overline{\Sigma_2} V_2^t})^{-1}g_0 
    $$
    are non-zero and square-integrable,    since $\mu(\Ro_{\Sigma_1 V_1^t}),
   \mu(\Ro_{\overline{\Sigma_2} V_2^t})$ are unitary operators. Their
   $\bA$-Wigner distribution is 
    $$
    |\W_\bA( f,g)| = |\det L|^{-1/2}\,|(f_0\otimes \overline{g_0})(W^t
    L^{-1}\,\cdot\,)| 
    $$
    and has compact support as well. 
\end{proof}

Theorem~\ref{thm:new_uncertainty} characterizes all symplectic
matrices for which the metaplectic \tf\ distribution $W_{\bA }$
satisfies a Benedicks-type \up\  % The geometric meaning of the
% assumptions is not clear to us, but Theorem~\ref{thm:new_uncertainty}
and yields a simple  algorithm to implement this characterization. 

\vspace{3mm}

\begin{algorithm}[H]
    Input: $\bA \in \Sp$.
    \begin{enumerate}
        \item Compute the pre-Iwasawa decomposition $\bA = \bV_Q \bD_L \bRo_{U}.$
        \item Compute $U^t U$. 
        %[suggestion for notation of $U'$?]
        \item If $U^t U$ is block-diagonal, then there exist non-zero
          $f,g \in\ltd$, \\ % $f\not\equiv 0$, $g\not\equiv 0$, \\
        such that $\W_\bA(f,{g})$ is supported on a compact set. 
        \item Otherwise, whenever $\W_\bA(f,{g})$ is supported on a set of finite measure, then $f\equiv 0$ or $g\equiv 0$. 
    \end{enumerate}
\end{algorithm}
% There are several ways of computing the Iwasawa decomposition of a symplectic matrix 
% We refer to
% \cite{Tam2006, Sawyer2016, BenziRazouk2007} for the numerical
% treatment of the Iwasawa decomposition of the symplectic group.%  Note
% that in the literature  the Iwasawa composition is typically
% formulated in the order $KAN$, i.e., $\bA = \bRo_U \bD_L \bV_Q $~\cite{Tam2006, Sawyer2016, BenziRazouk2007}.
% , in the form
% \begin{equation}
%     \begin{split}
%         \A & = \begin{pmatrix}
%         A & B \\ -B & A
%     \end{pmatrix} 
%     \cdot \begin{pmatrix}
%         \diag(a_1,\dots,a_d) & 0 \\ 0& \diag(a_1^{-1},\dots,a_d^{-1})
%     \end{pmatrix}
%     \cdot 
%     \begin{pmatrix}
%         L & LQ \\ 0 & L^{-t}
%     \end{pmatrix} \\
%     & = \Ro_{A+iB} \cdot \D_{\diag(a_1,\dots, a_d)} \cdot \D_L \V_{Q^t}^t. 
%     \end{split}
% \end{equation}
% %$\bA = \bRo_U \bD_L \bV_Q $. 
 %  To work with our desired order $NAK$, one can compute instead the
 % KAN-decomposition of $\bA^t$ and then take the transpose. 
% $\bRo_U^t \bD_L^t $ \dwave{$\bV_Q^t$} of $\bA ^t$. \marginnote{CH:\dwave{$\bV_Q^t$} needs to be upper triangular. \is{IS: yes, that's the version computed in the papers}}[-40pt]
% The desired NAK-Iwasawa decomposition is obtained by transposing
% this factorization. 
To determine  the  pre-Iwasawa decomposition one can use the explicit formulas
given in\cite[Sec.~2.2.2]{Gosson2006}. It would be interesting to understand the geometric meaning of the
condition on $U^tU$.
  % , not necessarily the one derived from the Iwasawa
% decomposition,
%to determine whether $\W_\bA$ obeys a Benedicks-type  uncertainty principle.

\section{Quadratic uncertainty principles}\label{sec:auto} 
This section is dedicated to uncertainty principles with an additional restriction on the pair $(f,g)$, namely, $g= f$. The 
%main interest in this setup 
motivation stems from the Rihaczek distribution 
$$
\mathrm{R}(f, g)(x,\omega) = e^{-2\pi i x \cdot  \omega}f(x) \overline{\F g(\omega)},
$$
in which case we can clearly find non-zero functions $f,g\in\ltd$, % $f\not\equiv 0$, $g\not\equiv 0$,
such that $\mathrm{R}(f, g)$ is compactly supported. However, with the additional assumption $g = f$, $\mathrm{R}(f,f)$ cannot be supported on a set of finite measure unless $f$ is the zero function. This is precisely a reformulation of Benedicks's \up~\cite{Benedicks1985, AmreinBerthier1977}.

Before we state the main result, we prove a version of   Lemma
\ref{lem:diagonal_P_tau_free} which is better  suited for quadratic
\tf\ distributions.

\begin{lemma}\label{lem:diagonal_P_tau_auto} 
    Let $U = A+iB\in\mathrm{U}(2d,\C)$ be a unitary matrix. Then the following statements are equivalent.
    \begin{enumerate}[(i)]
        \item There exist an orthogonal matrix $W\in\mathrm{O}(2d,\R)$ and a unitary matrix $\Delta\in\mathrm{U}(d,\C)$ such that
     \begin{equation}\label{eq:lem_block_char_auto}
        U = W\cdot \diag(\Delta,\overline{\Delta}). 
    \end{equation}
    \item There exist an orthogonal matrix $W\in\mathrm{O}(2d,\R)$ and
      unitary matrices $\Delta_1, \Delta_2\in\mathrm{U}(d,\C)$ such
      that $\Delta_1 \Delta_2^t \in \mathrm{U}(d,\C)$ is real-valued and
     \begin{equation} %\label{eq:lem_block_char_auto}
        U = W\cdot \diag(\Delta_1,\Delta_2). 
    \end{equation} 
    \item There exists a unitary matrix $\boldsymbol{\Delta}\in\mathrm{U}(d,\C)$ such that
    \begin{equation} \label{eq:lem_block_char_auto_2}
        U^t U = \diag(\boldsymbol{\Delta},\overline{\boldsymbol{\Delta}}).
    \end{equation}
    \end{enumerate}
\end{lemma}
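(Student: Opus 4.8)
The plan is to split the stated three–way equivalence into the two routine equivalences among (i) and (ii), the easy implication (i)$\Rightarrow$(iii), and the one substantial implication (iii)$\Rightarrow$(i). The only non‑elementary input will be the observation that $U^tU$ is always a \emph{complex symmetric} unitary matrix, combined with the Autonne--Takagi factorization of such matrices; everything else is matrix bookkeeping, parallel to the proof of Lemma~\ref{lem:diagonal_P_tau_free}.

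First I would handle (i)$\Leftrightarrow$(ii). If (i) holds, take $\Delta_1=\Delta$, $\Delta_2=\overline{\Delta}$; then $\Delta_1\Delta_2^t=\Delta(\overline{\Delta})^t=\Delta\Delta^{-1}=I$, which is real, so (ii) holds. Conversely, if $U=W\diag(\Delta_1,\Delta_2)$ with $O:=\Delta_1\Delta_2^t$ real, then $O$ is a real unitary matrix, hence $O\in\mathrm{O}(d,\R)$; solving for $\Delta_2$ gives $\Delta_2=O^t\overline{\Delta_1}$, so
\[
\diag(\Delta_1,\Delta_2)=\diag(I,O^t)\,\diag(\Delta_1,\overline{\Delta_1}),
\]
and since $\diag(I,O^t)\in\mathrm{O}(2d,\R)$ we may absorb it into $W$ to obtain (i) with $\Delta=\Delta_1$. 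The implication (i)$\Rightarrow$(iii) is then immediate: if $U=W\diag(\Delta,\overline{\Delta})$ with $W\in\mathrm{O}(2d,\R)$, then $W^tW=I$ yields
\[
U^tU=\diag(\Delta,\overline{\Delta})^t\,\diag(\Delta,\overline{\Delta})=\diag\bigl(\Delta^t\Delta,\ \overline{\Delta^t\Delta}\bigr),
\]
so \eqref{eq:lem_block_char_auto_2} holds with $\boldsymbol{\Delta}=\Delta^t\Delta\in\mathrm{U}(d,\C)$.

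The main step is (iii)$\Rightarrow$(i). Since $U^tU$ is always symmetric, the block identity \eqref{eq:lem_block_char_auto_2} forces $\boldsymbol{\Delta}$ to be a symmetric unitary matrix. I would then invoke the Autonne--Takagi factorization to write $\boldsymbol{\Delta}=\Delta^t\Delta$ for some $\Delta\in\mathrm{U}(d,\C)$ (Takagi's theorem applied to the symmetric matrix $\boldsymbol{\Delta}$, all of whose singular values equal $1$). Put $V:=\diag(\Delta,\overline{\Delta})\in\mathrm{U}(2d,\C)$. Then $V^tV=\diag\bigl(\Delta^t\Delta,\ \overline{\Delta^t\Delta}\bigr)=\diag(\boldsymbol{\Delta},\overline{\boldsymbol{\Delta}})=U^tU$, so $W:=UV^{-1}$ is unitary and satisfies $W^tW=(V^{-1})^t(U^tU)V^{-1}=(V^{-1})^tV^tVV^{-1}=I$. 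A unitary $W$ with $W^tW=I$ also satisfies $\overline{W}^tW=I$, and cancelling the invertible factor $W$ gives $\overline{W}^t=W^t$, i.e.\ $W=\overline{W}$; thus $W$ is real and $W\in\mathrm{O}(2d,\R)$. Hence $U=WV=W\diag(\Delta,\overline{\Delta})$, which is \eqref{eq:lem_block_char_auto}. Together with (i)$\Rightarrow$(ii) this closes the equivalence.

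I expect the only genuine obstacle to be the appeal to the Takagi factorization of a symmetric unitary matrix; the remaining difficulty is purely a matter of care, namely keeping transposes and conjugate transposes straight, since it is precisely because the lower block in \eqref{eq:lem_block_char_auto_2} is $\overline{\boldsymbol{\Delta}}$ (rather than $\boldsymbol{\Delta}^*$ or $\boldsymbol{\Delta}^t$) that the ansatz $V=\diag(\Delta,\overline{\Delta})$ reproduces $U^tU$ exactly. Note that, in contrast to Lemma~\ref{lem:diagonal_P_tau_free}, no invertibility hypothesis on $\im U$ is needed, which is exactly why this version is the one adapted to the quadratic case $g=f$.
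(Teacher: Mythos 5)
Your proof is correct, and for the one substantial implication it takes a genuinely different route from the paper. For (iii)$\Rightarrow$(i) the paper does not argue directly: it first passes to (ii) by rotating $U$ with a scalar $\tau\in\T$ so that $\im(\tau U)$ becomes invertible (Lemma~\ref{lemma:tau-expansion}), applying the earlier block-diagonality characterization (Lemma~\ref{lem:diagonal_P_tau_free}) to obtain $U=W\diag(\Delta_1,\Delta_2)$, and then extracting the reality of $\Delta_1\Delta_2^t$ from the identity $\overline{\Delta_1^t\Delta_1}=\Delta_2^t\Delta_2$; only afterwards does it run the (ii)$\Rightarrow$(i) step that you also give. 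You instead observe that $U^tU$ is a symmetric unitary matrix, invoke the Autonne--Takagi factorization to write $\boldsymbol{\Delta}=\Delta^t\Delta$, and show that $W=U\,\diag(\Delta,\overline{\Delta})^{-1}$ is automatically real orthogonal because it is unitary with $W^tW=I$. Your argument is shorter and bypasses the invertibility issue for $\im U$ entirely, at the price of importing Takagi's theorem, which the paper does not use; note, though, that for a \emph{unitary} symmetric matrix $S=A+iB$ the factorization is elementary ($A,B$ are commuting real symmetric matrices, so $S=O\Sigma O^t$ for orthogonal $O$ and diagonal unitary $\Sigma$, whence $S=(\Sigma^{1/2}O^t)^t(\Sigma^{1/2}O^t)$), so the external input can be made self-contained. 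The paper's detour through (ii) buys consistency with its established toolkit (the $\tau$-trick and Lemma~\ref{lem:diagonal_P_tau_free}); your version buys directness. Your handling of (i)$\Leftrightarrow$(ii) and (i)$\Rightarrow$(iii) coincides with the paper's.
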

\begin{proof}
    We show the following chain of implications
    \begin{equation}
        \text{(i)}\ \Rightarrow \ 
        \text{(iii)} \ \Rightarrow\ 
        \text{(ii)}\ \Rightarrow \ \text{(i)}.
    \end{equation}
    A simple computation shows that (i) implies (iii) with
    $\boldsymbol{\Delta } = \Delta ^t\Delta $. 

(iii) $\Rightarrow$ (ii)    % We now assume that (iii) holds and we show (ii).
Let $\tau\in\T$
    such that $\im(\tau U)$ is invertible according to  Lemma \ref{lemma:tau-expansion}. By Lemma \ref{lem:diagonal_P_tau_free},  there exist $W\in\mathrm{O}(2d,\R)$, $V_1, V_2\in\mathrm{O}(d,\R)$ and diagonal unitary matrices $
     \Sigma_{1,\tau},\Sigma_{2,\tau} \in\mathrm{U}(d,\C)$ with  
     \begin{equation}
        \tau U = W\cdot \diag(\Sigma_{1,\tau}V_1,\Sigma_{2,\tau}V_2^t) .
    \end{equation}
    Set  % $\Sigma_1 = \overline{\tau}\Sigma_{1,\tau},\, \Sigma_2 = \overline{\tau}\Sigma_{2,\tau} \in\mathrm{U}(d,\C)$ and 
    $\Delta_1 = \bar{\tau}\Sigma_1 V_1^t,\, \Delta_2 = \bar{\tau } \Sigma_2V_2^t\in \mathrm{U}(d,\C)$. Then
    \begin{equation}
        U = % W\cdot \diag(\Sigma_{1},\Sigma_{2}) \cdot \diag(V_1,V_2)^t =
        W \cdot \diag(\Delta_1,\Delta_2).
    \end{equation}
    By assumption,
    \begin{equation}
    \diag(\boldsymbol{\Delta}, \overline{\boldsymbol{\Delta}}) = U^tU = \begin{pmatrix}
        \Delta_1^t\Delta_1 & 0 \\ 0 & \Delta_2^t \Delta_2
    \end{pmatrix},
    % \diag(\Delta_1^t\Delta_1, \Delta_2^t\Delta_2),
    \end{equation}
    hence 
    \begin{equation}\label{eq:lem_bold_delta_ii}
         \overline{\Delta_1^t \Delta_1} = \Delta_2^t \Delta_2  \, .
    \end{equation}
    We need to show that $\Delta_1 \Delta_2^t$ is real-valued. 
    Indeed, by multiplying \eqref{eq:lem_bold_delta_ii} with
    $\Delta_1$ from the left and with $\Delta_2^{-1} = \Delta_2^* =
    \overline{\Delta_2^t}$ from the right, we obtain 
    \begin{equation}
        \Delta_1 \Delta_1^* \overline{\Delta_1} {\Delta_2^*} =
        \Delta_1 \Delta_2^t \Delta_2 \Delta_2^* \, ,
    \end{equation}
or equivalently, $\overline{\Delta_1 \Delta_2^t} = \Delta_1\Delta_2^t$,    as claimed.

    % If (ii) holds, i.e., if
(ii) $\Rightarrow$ (i) The assumption says that     there exist $W'\in\mathrm{O}(2d,\R)$ and
    $\Delta_1,\Delta_2\in\mathrm{U}(d,\C)$  such that
    \begin{equation}
        U = W'\cdot \diag(\Delta_1,\Delta_2)
    \end{equation}
and the matrix  $W_0 = \Delta_1\Delta_2^t\in \R^{d\times d}$  is
real-valued and hence orthogonal. Using $\Delta_1^{-1} =
\overline{\Delta_1^t}= \Delta _1^*$,
    \begin{equation}
        \Delta_2 = (\Delta_1^{-1}W_0)^t = W_0^t\overline{\Delta_1}.
    \end{equation}
    We set $W = W'\cdot \diag(I,W_0^t) \in \mathrm{O}(2d,\R)$ and $\Delta = \Delta_1\in\mathrm{U}(d,\C)$. Then
    \begin{equation}
        U = W\cdot \diag(\Delta,\overline{\Delta})
    \end{equation} 
    and (i) follows.
\end{proof}

Before we formulate the main result for quadratic metaplectic Wigner
distributions, we prove an extension of
Theorem~\ref{thm:Jaming-generalization} for  a pair
$(\mu(\Ro_{U_1})f,\mu(\Ro_{U_2})f)$. %,  %$\Ro_{U_1},\Ro_{U_2}\in \Spp$.  
\begin{theorem}\label{thm:new-uncertainty-pairs}
Let $\Ro_{U_1},\Ro_{U_2}\in \Spp$ be a orthogonal symplectic matrices such that the matrix $U_1 U_2^*$ is not real-valued. Then Benedicks's uncertainty principle holds for the pair $(\mu(\Ro_{U_1})f,\mu(\Ro_{U_2})f)$:
\begin{center}
If both $\mu(\Ro_{U_1})f$ and $\mu(\Ro_{U_2})f$ are supported on a set of finite measure, \\
    then $f\equiv 0$. 
\end{center}
\end{theorem}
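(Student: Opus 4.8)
\textit{Proof plan.} The plan is to collapse the pair $(\mu(\Ro_{U_1})f,\mu(\Ro_{U_2})f)$ into a \emph{single} metaplectic transform and then invoke Theorem~\ref{thm:Jaming-generalization}. First I would set $h = \mu(\Ro_{U_1})f$ and $V = U_2 U_1^{*}\in\mathrm{U}(d,\C)$, so that $\Ro_{U_2} = \Ro_V\Ro_{U_1}$ by Lemma~\ref{rem:Sp_rechenregel}(iv). Since $\mu$ is a projective representation, $\mu(\Ro_{U_2})f = c\,\mu(\Ro_V)\mu(\Ro_{U_1})f = c\,\mu(\Ro_V)h$ for some $c\in\T$, and hence $|\mu(\Ro_V)h| = |\mu(\Ro_{U_2})f|$. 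Thus the hypotheses say exactly that both $h$ and $\mu(\Ro_V)h$ are supported on sets of finite measure. Moreover $V^{*} = U_1 U_2^{*}$ is not real-valued by assumption, so $V$ itself is not real-valued.

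Next I would diagonalize $V$ with the real joint singular value decomposition of Lemma~\ref{lem:unitary_decomp}: write $V = W\,\Sigma\,V_0^{t}$ with $W,V_0\in\mathrm{O}(d,\R)$ and $\Sigma\in\mathrm{U}(d,\C)$ diagonal. By Lemma~\ref{rem:Sp_rechenregel}(i),(iv) one has $\Ro_W = \D_W$, $\Ro_{V_0^{t}} = \D_{V_0^{t}}$, and $\Ro_V = \D_W\,\Ro_\Sigma\,\D_{V_0^{t}}$, so up to a global phase
$$
\mu(\Ro_V)h = \mu(\D_W)\,\F_\Sigma\,\mu(\D_{V_0^{t}})h.
$$
The operators $\mu(\D_W)$ and $\mu(\D_{V_0^{t}})$ are unitary coordinate changes by orthogonal matrices, hence measure-preserving, so they neither create nor destroy the property of having a support of finite measure (the same reduction as in the proof of Theorem~\ref{thm:Jaming-generalization} and in Lemma~\ref{lem:reduce_ortho}). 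Setting $h_1 = \mu(\D_{V_0^{t}})h$, I conclude that both $h_1$ and $\F_\Sigma h_1$ are supported on sets of finite measure.

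Finally I would note that $\Sigma$ must have a non-zero imaginary part: if $\im\Sigma = 0$, then $\Sigma$ is a real diagonal unitary matrix, hence orthogonal, and $V = W\Sigma V_0^{t}$ would be real-valued, contradicting the hypothesis. Therefore Theorem~\ref{thm:Jaming-generalization} applies to $h_1$ and forces $h_1\equiv 0$; undoing the unitary operators gives $h\equiv 0$ and then $f = \mu(\Ro_{U_1})^{-1}h\equiv 0$.

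I do not expect a genuine obstacle here, because the substantive difficulty—the possibility that $\im\Sigma$ (equivalently, the imaginary block of the unitary matrix underlying $\F_\Sigma$) is \emph{not} invertible—has already been absorbed into Theorem~\ref{thm:Jaming-generalization}, whose proof dispatches that case by restricting to the range of the imaginary block exactly as in Theorem~\ref{thm:new_uncertainty_free}. The only care required in the present argument is bookkeeping: checking that each reduction step (passing from $U_2$ to $V$, factoring out $\D_W$ and $\D_{V_0^{t}}$) preserves ``support of finite measure'' and that every global phase is harmless, and verifying the one-line implication ``$V$ not real-valued $\Rightarrow$ $\im\Sigma\neq 0$''.
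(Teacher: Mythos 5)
Your proposal is correct and follows essentially the same route as the paper's proof: reduce the pair to a single function $h$ and the unitary $V=U_2U_1^*$ (the paper uses $U=U_1U_2^*$ and starts from $\mu(\Ro_{U_2})f$, a symmetric choice), factor $V=W\Sigma V_0^t$ via Lemma~\ref{lem:unitary_decomp}, discard the orthogonal dilations, observe $\im\Sigma\neq 0$, and invoke Theorem~\ref{thm:Jaming-generalization}. No gaps.
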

\begin{proof}
    The proof idea relies on Theorem \ref{thm:Jaming-generalization}
    and the matrix decomposition of Lemma~\ref{lem:unitary_decomp}. %  but it also incorporates the more sophisticated matrix
    % decompositions used so far.

    Assume $f\in \ltd$ such that both $\mu(\Ro_{U_1})f$ and $\mu(\Ro_{U_2})f$ are supported on a set of finite measure. We aim to show that $f$ is the zero function.
    
    We set $U = U_1U_2^*$ and $f_1 =\mu(\Ro_{U_2})f$. 
    Then  by Remark \ref{rem:embedding}
    \begin{equation}
        |\mu(\Ro_{U_1})f| =| \mu(\Ro_{U_1})\mu(\Ro_{U_2})^{-1} f_1 |
        = |\mu(\Ro_{U_1}\Ro_{U_2}^{-1}) f_1|
        =|\mu(\Ro_{U_1 U_2^*}) f_1|
        =|\mu(\Ro_{U}) f_1 |\, .
    \end{equation}
    By construction, $f_1$ and $\mu(\Ro_{U}) f_1$ are supported on a set of finite measure. 

We now decompose $U$ with  Lemma \ref{lem:unitary_decomp} as  $U =
W\Sigma V^t$ with 
    $W,V\in\mathrm{O}(d,\R)$ and a diagonal matrix
    $\Sigma\in\mathrm{U}(d,\C)$. By
    Lemma~\ref{rem:Sp_rechenregel}, $\Ro_W =\D_W$ and $\Ro_{V^t}     =\D_{V^t}$,  
    and $\mu(\D_W),\, \mu(\D_{V^t})$ act as  dilation operators,
    therefore     both $f_2 = \mu(\D_{V^t})f_1$ and
    \begin{equation}
    \mu(\Ro_{\Sigma}\Ro_{V^t}) f_1 =  \mu(\Ro_{\Sigma}) f_2 =\F_\Sigma f_2
    \end{equation}
    are supported on a set of finite measure. Furthermore, since $W,V$
    are real-valued but $U= U_1U_2^*$ is not real-valued by
    assumption, we conclude that $\mathrm{Im}\, (\Sigma ) \neq 0$.
    By Theorem \ref{thm:Jaming-generalization} (Benedicks's
    uncertainty principle for fractional Fourier transforms), $f_2$ is the zero function, hence
    \begin{equation}
        f = \mu(\Ro_{U_2})^{-1} f_1 = \mu(\Ro_{U_2})^{-1} \mu(\D_{V^t})^{-1} f_2 \equiv 0,
    \end{equation}
    which concludes the proof. 
\end{proof}
Finally, we prove the \up\  for quadratic metaplectic Wigner distributions.
\begin{theorem}\label{thm:auto_uncertainty} 
    Let $\bA \in \Sp$  be a symplectic matrix with pre-Iwasawa decomposition 
    \begin{equation}
        \bA = \bV_Q \bD_L \bRo_{U}.
    \end{equation}
    % with $\det(\im(U))\neq 0$. 
    Then the following alternative holds. 
    % Then the following are equivalent:
    \begin{enumerate}[(i)]
    \item If $U^tU \neq \diag(\Delta, \overline{\Delta})$ for all unitary matrices $\Delta\in\mathrm{U}(d,\C)$, then Benedicks's uncertainty principle holds:
    \begin{center}
    If $\mathrm{W}_\bA(f,f)$ is supported on a set of finite measure for some $f\in\ltd$, \\
    then $f\equiv 0$.
\end{center}
        \item 
        If $U^t U  =\diag(\Delta, \overline{\Delta})$ for some unitary matrix $\Delta\in\mathrm{U}(d,\C)$, 
        then there exists a non-zero function $f\in\ltd$, such that the support of $\W_\bA(f,f)$ is compact, in particular, has finite measure.
    \end{enumerate}
\end{theorem}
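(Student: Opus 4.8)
The plan is to reduce both alternatives, via Lemma~\ref{lem:reduce_ortho}, to the orthogonal factor $\bRo_U$ of the pre-Iwasawa decomposition: since $\mu(\bV_Q)$ and $\mu(\bD_L)$ affect neither the finiteness of the measure of a support nor its compactness, it suffices to analyse $\W_{\bRo_U}(f,f)=\mu(\bRo_U)(f\otimes\overline f)$. I will repeatedly use the auxiliary identity
\[
\mu(\Ro_U)\overline f=c_U\,\overline{\mu(\Ro_{\overline U})f},\qquad c_U\in\T,
\]
which follows by inserting the real singular value decomposition $U=W\Sigma V^t$ of Lemma~\ref{lem:unitary_decomp}, factoring $\mu(\Ro_U)=\mu(\D_W)\mu(\Ro_\Sigma)\mu(\D_{V^t})$ via Lemma~\ref{rem:Sp_rechenregel}, using that the real dilations $\mu(\D_W),\mu(\D_{V^t})$ commute with complex conjugation, and applying \eqref{eq:frac_inv_rule}.

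For part~(i) I split according to whether $U^tU$ is block-diagonal. If $U^tU$ is \emph{not} block-diagonal, then by Theorem~\ref{thm:new_uncertainty}(i) the sesquilinear Benedicks principle already holds for $\W_\bA$, and specialising to $g=f$ gives $f\equiv 0$. If $U^tU$ \emph{is} block-diagonal but, by hypothesis, not of the form $\diag(\Delta,\overline\Delta)$, suppose $\W_{\bRo_U}(f,f)$ has a support of finite measure. Pick $\tau\in\T$ with $\im(\tau U)$ invertible (Lemma~\ref{lemma:tau-expansion}); using $\bRo_U=\bRo_{\tau U}\bRo_{\overline\tau I}$, Remark~\ref{rem:embedding} and the conjugation identity, one rewrites $\mu(\bRo_U)(f\otimes\overline f)=c\,\mu(\bRo_{\tau U})(f_1\otimes\overline{f_2})$ with $f_1=\mu(\Ro_{\overline\tau I})f$ and $f_2=\mu(\Ro_{\tau I})f$. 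Since $(\tau U)^t(\tau U)=\tau^2U^tU$ is still block-diagonal and $\im(\tau U)$ is invertible, Lemma~\ref{lem:diagonal_P_tau_free} yields $\tau U=W\,\diag(U_1',U_2')$ with $W\in\mathrm{O}(2d,\R)$ and $U_1',U_2'\in\mathrm{U}(d,\C)$. Peeling off the measure-preserving coordinate change $\mu(\bRo_W)=\mu(\bD_W)$ and applying Remark~\ref{rem:embedding} together with the conjugation identity once more, we conclude that $\mu(\Ro_{\overline\tau U_1'})f$ and $\mu(\Ro_{\tau\overline{U_2'}})f$ both have a support of finite measure (if either tensor factor vanished, then $f\equiv 0$ already). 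Writing $\tilde U_1=\overline\tau U_1'$, $\tilde U_2=\tau\overline{U_2'}$, a direct computation gives $\tilde U_1\tilde U_2^{*}=\overline{\tau}^{2}\,U_1'(U_2')^t$, and by Lemma~\ref{lem:diagonal_P_tau_auto} this product is real-valued precisely when $U^tU=\diag(\Delta,\overline\Delta)$ for some unitary $\Delta$, which is excluded. Hence $\tilde U_1\tilde U_2^{*}$ is not real-valued, and Theorem~\ref{thm:new-uncertainty-pairs} forces $f\equiv 0$.

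For part~(ii), assume $U^tU=\diag(\Delta,\overline\Delta)$ for some unitary $\Delta$. By Lemma~\ref{lem:diagonal_P_tau_auto} we may then write $U=W\,\diag(\Delta',\overline{\Delta'})$ with $W\in\mathrm{O}(2d,\R)$ and $\Delta'\in\mathrm{U}(d,\C)$, and the conjugation identity gives $\mu(\bRo_U)(f\otimes\overline f)=c\,\mu(\bD_W)\bigl(h\otimes\overline h\bigr)$ with $h=\mu(\Ro_{\Delta'})f$. Choosing any non-zero, compactly supported $f_0\in\ltd$ and setting $f=\mu(\Ro_{\Delta'})^{-1}f_0\neq 0$, the function $\mu(\bD_W)(f_0\otimes\overline{f_0})$ has compact support; hence so does $\W_{\bRo_U}(f,f)$, and, since $\mu(\bD_L)$ is a linear coordinate change and $\mu(\bV_Q)$ a chirp multiplication, so does $\W_\bA(f,f)$.

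I expect the main obstacle to be the bookkeeping in the second case of part~(i): one must establish the auxiliary conjugation identity (the paper so far records only the diagonal case \eqref{eq:frac_inv_rule}), keep track of the $\tau$-twist, the orthogonal factor $W$, the two block unitaries $U_1',U_2'$ and several complex conjugations simultaneously in order to arrive at the clean formula $\tilde U_1\tilde U_2^{*}=\overline{\tau}^{2}U_1'(U_2')^t$, and finally recognise through Lemma~\ref{lem:diagonal_P_tau_auto} that the real-valuedness of this product is exactly the negation of the hypothesis $U^tU=\diag(\Delta,\overline\Delta)$.
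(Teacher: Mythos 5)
Your proposal is correct and follows essentially the same route as the paper: reduction to the orthogonal factor via Lemma~\ref{lem:reduce_ortho}, the case split on whether $U^tU$ is block-diagonal (invoking Theorem~\ref{thm:new_uncertainty} in the first case and Theorem~\ref{thm:new-uncertainty-pairs} together with the conjugation identity and the factorization $U=W\,\diag(\Delta_1,\Delta_2)$ in the second), and the explicit construction for part~(ii). The only cosmetic difference is that you unpack the $\tau$-twist argument inline, whereas the paper obtains the factorization with $\Delta_1\Delta_2^t\notin\R^{d\times d}$ directly from Lemma~\ref{lem:diagonal_P_tau_auto}, whose proof of (iii)$\Rightarrow$(ii) contains exactly that bookkeeping.
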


\begin{proof}
  By Lemma \ref{lem:reduce_ortho}, we may assume that $Q =0$ and $L=
  I$ and  that $\bA = \bRo_{U}$.
  
  We first prove (i) and assume that $\W_\bA(f,f)=
  \W_{\bRo_{U}}(f,f) $ is supported on a set of finite measure for $f\in L^2(\rd )$. We  distinguish two cases.

  \textbf{Case 1.}  $U^t U$ is not block-diagonal. Then  Theorem
  \ref{thm:new_uncertainty}  with $g= f$ implies that $f=0$.

    \textbf{Case 2.} $U^t U$ is block-diagonal, but $U^tU \neq \diag
    (\Delta, \Delta ^t)$. Then by Lemma~\ref{lem:diagonal_P_tau_auto}, there exists a matrix $W\in\mathrm{O}(2d,\R)$ and unitary matrices $\Delta_1, \Delta_2\in\mathrm{U}(d,\C)$ with $\Delta_1\Delta_2^t\notin \R^{d\times d}$ such that
    $$
    U = W\cdot \diag(\Delta_1,\Delta_2).
    $$
   Recall that by Lemma~\ref{rem:Sp_rechenregel} $\mu(\bRo_W) =
   \mu(\bD_W)$ and by Remark \ref{rem:embedding} the action of
   $\mu(\bRo_{\diag(\Delta_1, \Delta_2)})$ tensorizes. Consequently,
   \begin{align*}
     |\mu (\bRo_{U})(f\otimes \bar{f}) |&= |\mu (\bD _W) \mu (\bRo_{\diag
     (\Delta _1, \Delta _2)} (f\otimes \bar{f})| \\
&=\big|\mu (\bD _W) \big( \mu (\Ro_{\Delta _1} f \otimes \Ro_{\Delta_2}\bar{f})\big)\big|\, .
   \end{align*}
Since $\mu (\bD _W)F$ amounts to a rotation of the support of $F$, we
may ignore it and are left with the fact that the tensor product $\mu
(\Ro_{\Delta _1} )f \otimes \mu (\Ro_{\Delta   _2} )\bar{f}= \mu
(\Ro_{\Delta _1} f \otimes \overline{\Ro_{\bar{\Delta} 
  _2}f)}$ is supported on a set of finite measure.  
 
    By assumption, $U^tU$ is not block-diagonal  of the form
 $\diag (\Delta, \bar{\Delta })$, thus condition (ii) of
 Lemma~\ref{lem:diagonal_P_tau_auto} says that $\Delta _1 \Delta _2^t=
 \Delta _1 \bar{\Delta }_2^*$
 cannot be real-valued. This is precisely the assumption of
 Theorem~\ref{thm:new-uncertainty-pairs}, and we conclude that $f=0$.

    We now prove (ii). By Lemma \ref{lem:diagonal_P_tau_auto}, there
    exists an orthogonal matrix $W\in\mathrm{O}(2d,\R)$ and a unitary
    matrix $\Delta \in\mathrm{U}(d,\C)$ such that 
    $$
    U = W\cdot \diag(\Delta,\overline{\Delta}).
    $$
Choose non-zero $f _0\in\ltd$ with compact support
    and  set 
    $
    f = \mu(\Ro_{{{\Delta}}})^{-1}f_0.
    $
    Then $f\in\ltd$ and $f\not\equiv 0$ because 
    $\mu(\Ro_{{{\Delta}}})^{-1}$ is a unitary operator.
 Using again that $\bRo_W = \bD_W$, we obtain 
    \begin{align}
        |\W_\bA(f,f)|  
      & = \mu (\bD_W) |\mu (\bRo_{\diag (\Delta , \bar{\Delta })}) (f
        \otimes \bar{f})| \\
&= \mu (\bD_W) ( |\mu(\Ro_{{\Delta}}) f| \otimes
      |\overline{\mu(\Ro_{{\Delta}}) f}|) \\ 
   &        = \mu (\bD _W) (|f_0| \otimes |{f_0}|),
    \end{align}  
and this function  is clearly supported on a compact set.
\end{proof}

\begin{remark}
   Let $\iota f $ be either complex conjugation $f \mapsto \bar{f}$
   or the involution $f \mapsto f^*, f^*(t) = \bar{f}(-t)$. Then
   Theorem~\ref{thm:auto_uncertainty} also holds for the quadratic
   \tf\ distribution $W_{\A } (f, \iota f)$. The condition   % We remark that the same proof can clearly be easily modified for $g = \iota f$, where $\iota$ is an involution $\iota$ on $\ltd$ which has a simple interaction with the fractional Fourier transforms, e.g., $f\mapsto \overline{f}$ or $f\mapsto \overline{f}(-\cdot)$, but the condition
   $\Delta_1\Delta_2^t\in\R^{d\times d}$ needs to be replaced by the
   condition  $\Delta_1\Delta_2^{-1} =\Delta_1 \overline{\Delta_2^t}
   \in\R^{d\times d}$, and the proof is the same. 
\end{remark}

\section{Examples}\label{sec:examples} 
In this section, we show how to apply Theorems~\ref{thm:new_uncertainty} and~\ref{thm:auto_uncertainty}, and
we  recover the known uncertainty principles for the \stft, the
ambiguity function, and the $\tau$-Wigner distribution. The symplectic
matrices associated to the  \stft\ and the $\tau $-Wigner distributions
were already derived in ~\cite{CorderoRodino2023}, here we test them with
regard to the validity of the \up . 
% In the following, $I = I_d$ and $0 = 0_{d\times d}$ are the identity and the zero matrix in $\R^{d\times d}$ respectively.

% \begin{enumerate}[(i)]
The short-time Fourier transform  is given by
    $$
    \mathrm{V}_{g}f (x,\omega) = \int_\Rd f(t)
    \overline{g(t-x)}e^{2\pi i \omega \cdot  t} \, dt.
    % = \F_{\diag(I,iI)}\bD_{L_{st}},
    $$
Let     % where 
    \begin{equation}
        % L_{\scaleto{\mathrm{STFT}}{4.5pt}}^{-1} = 
        % % [0, I \, ; -I, I]
        % \begin{pmatrix}  
        % 0 & I \\ -I & I \end{pmatrix} 
        % \ \Leftrightarrow \ 
        L_{\scaleto{\mathrm{STFT}}{4.5pt}} = 
        \begin{pmatrix} I & -I \\ I & 0 \end{pmatrix} \in\mathrm{GL}(2d,\R)
        \quad \text{with inverse}\quad 
        L_{\scaleto{\mathrm{STFT}}{4.5pt}}^{-1} = 
        \begin{pmatrix}  
        0 & I \\ -I & I \end{pmatrix}
    \end{equation}
    (the matrix $L_{\scaleto{\mathrm{STFT}}{4.5pt}}^{-1}$ corresponds
    to the linear map $(x,t)\mapsto (t,t-x)$). Since the coordinate
    change by $L_{\scaleto{\mathrm{STFT}}{4.5pt}}$ and the partial
    Fourier transform are metaplectic operators, 
     $\Vb_g f$ can be written as 
    the $\bA$-Wigner distribution
    $$
    \Vb_g f = \mu(\bA _{\scaleto{\mathrm{STFT}}{4.5pt}})(f\otimes \overline{g}) = \mu(\bRo_{\diag(I, iI)}\bD_{L_{\mathrm{STFT}}})(f\otimes \overline{g})\text{\ \ with the matrix}
    $$
    $$
    \bA _{\scaleto{\mathrm{STFT}}{4.5pt}} = \bRo_{\diag(I,
      iI)}\bD_{L_{\mathrm{STFT}}} = \begin{pmatrix}
        I & -I & 0 & 0 \\
        0 & 0 & I & I \\
        0 & 0 & 0 & -I \\
        -I & 0 & 0 & 0
    \end{pmatrix}.
    $$
    The pre-Iwasawa decomposition of $\bA _{\scaleto{\mathrm{STFT}}{4.5pt}}$ is given by
    \begin{equation}\label{eq:STFT-iwasawa}
        \bA _{\scaleto{\mathrm{STFT}}{4.5pt}} = \bV_{-\frac{1}{2}\begin{psmallmatrix}
            0 & I \\ I & 0
        \end{psmallmatrix}}
        \cdot \bD_{\sqrt{2} I}
        \cdot \bRo_{U_{\mathrm{STFT}}},
        \quad U_{\scaleto{\mathrm{STFT}}{4.5pt}} = \frac{1}{\sqrt{2}}\begin{pmatrix}
            I & -I \\ iI & iI
        \end{pmatrix}  \in\mathrm{U}(2d,\C).
    \end{equation}
    The deciding product is
    $$
    U_{\scaleto{\mathrm{STFT}}{4.5pt}}^t U_{\scaleto{\mathrm{STFT}}{4.5pt}}= \begin{pmatrix}
         0 & - I\\ 
        -I & 0
    \end{pmatrix},
    $$
    which is not block-diagonal, thus the Benedicks-type uncertainty
    principle holds for the \stft , which is known from   \cite{Jaming1998,Janssen1998_supp, GroechenigZimmermann2001,Wilczok2000}.

    The ambiguity function  is a more symmetric version of the \stft\
    and is defined as
    \begin{align*}
         \AF(f,g)(x,\omega) &=  \int_{\Rd}
        f(t+\tfrac{x}{2})\overline{g(t-\tfrac{x}{2})}e^{-2\pi i
                              \omega\cdot t}\, dt \\
     & =  e^{\pi i x\cdot \omega} \Vb_g f(x,\omega) 
        = \mu(\bV_{\frac{1}{2}\begin{psmallmatrix}
            0 & I \\ I & 0
        \end{psmallmatrix}})
        \Vb_g f(x,\omega),\quad x,\omega\in\Rd,
    \end{align*}
    % \begin{equation}
    %     \AF(f,g)(x,\omega) =  \int_{\Rd}
    %     f(t+\tfrac{x}{2})\overline{g(t-\tfrac{x}{2})}e^{-2\pi i
    %       \omega\cdot t}\, dt, =  e^{\pi i x\cdot \omega} \Vb_g f(x,\omega) 
    %     = \mu(\bV_{\frac{1}{2}\begin{psmallmatrix}
    %         0 & I \\ I & 0
    %     \end{psmallmatrix}})
    %     \Vb_g f(x,\omega),\quad x,\omega\in\Rd,
    % \end{equation}
    thus is given by $\AF(f,g) = \mu(\bA_{\scaleto{\mathrm{A}}{4.5pt}})(f,g)$, where 
    \begin{equation}
         \bA_{\scaleto{\mathrm{A}}{4.5pt}} 
         = \bV_{\frac{1}{2}\begin{psmallmatrix}
            0 & I \\ I & 0
        \end{psmallmatrix}} 
        \cdot  \bA _{\scaleto{\mathrm{STFT}}{4.5pt}} 
       = \begin{pmatrix}
            I & -I & 0 & 0 \\
            0 & 0 & I & I \\
            0 & 0 & \tfrac{1}{2}I & -\tfrac{1}{2}I  \\
            -\tfrac{1}{2}I  & -\tfrac{1}{2}I & 0 & 0
        \end{pmatrix}.
    \end{equation}
    By \eqref{eq:STFT-iwasawa}, the pre-Iwasawa decomposition of $\bA_{\scaleto{\mathrm{A}}{4.5pt}}$ is given by
    \begin{equation}
       \bA_{\scaleto{\mathrm{A}}{4.5pt}} = 
       \bV_0 \cdot\bD_{\sqrt{2}I}
        \cdot \bRo_{U_{\mathrm{A}}},
        \quad U_{\scaleto{\mathrm{A}}{4.5pt}} = \frac{1}{\sqrt{2}}\begin{pmatrix}
            I & -I \\ iI & iI
        \end{pmatrix} \in\mathrm{U}(2d,\C),
    \end{equation}
    and as above 
     $$
    U^t_{\scaleto{\mathrm{A}}{4.5pt}} U_{\scaleto{\mathrm{A}}{4.5pt}}= \begin{pmatrix}
         0 & - I\\ 
        -I & 0
    \end{pmatrix},
    $$
    hence it obeys the Benedicks-type uncertainty principle
    \cite{Jaming1998,Janssen1998_supp,
      GroechenigZimmermann2001,Wilczok2000}.
    
Finally, the $\tau$-Wigner distribution is given by
    \begin{equation}
    \begin{split}
    \W_\tau (f,g)(x,\omega) &= \int_{\Rd} f(x+\tau t)\overline{g(x-(1-\tau)t)}e^{-2\pi i \omega t}\, dt.
    \end{split}
    \end{equation}
    This family of time-frequency representations includes the Wigner distribution for $\tau=\frac{1}{2}$ and 
    the Rihaczek distribution for $\tau = 0$.
    By choosing
    % in particular, 
    % \begin{enumerate}[(i)]
    %     \item $\tau = \frac{1}{2}$: the classical Wigner distribution,  
    %     \item $\tau = 0$: the Rihaczek distribution
    %      $$
    %     \W_0 (f, g)(x,\omega) = e^{-2\pi i x^t \omega}f(x) \overline{\F g(\omega)},
    %     $$
    %     \item $\tau = 1$: the conjugate Rihaczek distribution
    %     $$
    %     \W_1 (f, g)(x,\omega) = e^{2\pi i x^t \omega}\F f(x) \overline{g(\omega)}.
    %     $$
    % \end{enumerate} 
    \begin{equation}
    % L_\tau^{-1} = 
    % % [I, \tau I\, ; I, -(1-\tau) I]
    % \begin{pmatrix}  I & \tau I \\  I & -(1-\tau)I \end{pmatrix} 
    % \ \Leftrightarrow \ 
    L_\tau = 
    \begin{pmatrix} I-\tau I & \tau I \\ I &-I  \end{pmatrix}  \in\mathrm{GL}(2d,\R)
    \quad \text{with inverse}\quad
    L_\tau^{-1} = 
    \begin{pmatrix}  I & \tau I \\  I & -(1-\tau)I \end{pmatrix},
\end{equation}
    $\W_{\tau}(f,g)$ can be written as
    $$
    \W_{\tau}(f,g) =  \mu(\bRo_{\diag(I, iI)}\bD_{L_{\tau}})(f\otimes
    \overline{g}) = \mu(\bA _{\tau})(f\otimes \overline{g}) \text{\ \ with the matrix}
    $$
    $$
    \bA_{\tau} = \begin{pmatrix}
        (1-\tau) I & \tau I & 0 & 0 \\
        0 & 0 & \tau & -(1-\tau)I \\
        0 & 0 & I & I \\
        -I & I & 0 & 0
    \end{pmatrix}.
    $$
    With the parameter $ \alpha_\tau \coloneqq (1-2\tau+2\tau^2)^{1/2} = ((1-\tau)^2+\tau^2)^{1/2}$
    the pre-Iwasawa decomposition of $\bA _{\tau}$ is given by
    \begin{equation}
        \bA _\tau = \bV_{-\frac{(1-2\tau)}{\alpha_\tau^2}\begin{psmallmatrix}
            0 & I \\ I & 0
        \end{psmallmatrix}} 
        \cdot \bD_{\alpha_\tau I} 
        \cdot \bRo_{U_{\tau}},\quad  U_{\tau} = \frac{1}{\alpha_\tau} \begin{pmatrix}
            1-\tau & \tau \\  i\tau & -i(1-\tau)
        \end{pmatrix}  \in\mathrm{U}(2d,\C).
    \end{equation}
    The deciding product is given by 
    $$
    U_{\tau}^t U_{\tau} = \frac{1}{\alpha_\tau^2} \begin{pmatrix}
       (1-2\tau) I &  2(1-\tau)\tau I \\
       2(1-\tau)\tau I & -(1-2\tau) I
    \end{pmatrix}.
    $$
    The matrix is block-diagonal if and only if  $\tau = 0 $ or $\tau
    = 1$, thus a Benedicks-type \up\ holds for the $\tau $-Wigner
    distribution  precisely when $\tau \neq 0,1$, and fails for the
    Rihaczek distributions $\tau = 0 $ and $\tau =1$.
    % as expected for the sesquilinear form (Theorem
    % \ref{thm:new_uncertainty}).
    
    Concerning the uncertainty principle for quadratic time-frequency representations in Theorem \ref{thm:auto_uncertainty}, if $\tau\in\{0,1\}$, then 
    % $$
    % \overline{1-2\tau} \neq -(1-2\tau).
    % $$
    \begin{equation}
        U_{0}^t U_{0} =  \begin{pmatrix}
       I & 0 \\ 0 & - I
    \end{pmatrix},
    \qquad
    U_{1}^t U_{1} =  \begin{pmatrix}
       -I &  0 \\ 0 & I
    \end{pmatrix}.
    \end{equation}
These are not of the form     % Therefore, for all $\tau \in\T$, there exists no unitary matrix $\boldsymbol{\Delta}\in\mathrm{U}(d,\C)$ such that
$U_{\tau}^t U_{\tau} = \diag(\boldsymbol{\Delta},\overline{\boldsymbol{\Delta}})$. Hence, for all $\tau\in\T$,
    the only function $f\in\ltd$ such that $\W_\tau(f,{f})$ is supported on a set of finite measure is the zero function.


\begin{thebibliography}{10}

\bibitem{AmreinBerthier1977}
{\sc Amrein, W.~O., and Berthier, A.~M.}
\newblock On support properties of {$L\sp{p}$}-functions and their {F}ourier
  transforms.
\newblock {\em J. Funct. Anal.  24}, 3 (1977), 258--267.

\bibitem{BayerPhD}
{\sc Bayer, D.}
\newblock {\em {Bilinear Time-Frequency Distributions and Pseudodifferential
  Operators}}.
\newblock PhD thesis, University of Vienna, Vienna, Austria, 2010.

\bibitem{Benedicks1985}
{\sc Benedicks, M.}
\newblock On {F}ourier transforms of functions supported on sets of finite
  {L}ebesgue measure.
\newblock {\em J. Math. Anal. Appl. 106}, 1 (1985), 180--183.

\bibitem{BenyiOkodjou2020}
{\sc B\'{e}nyi, A., and Okoudjou, K.~A.}
\newblock {\em Modulation spaces---with applications to pseudodifferential
  operators and nonlinear {S}chr\"{o}dinger equations}.
\newblock Applied and Numerical Harmonic Analysis. Birkh\"{a}user/Springer, New
  York, 2020.

% \bibitem{BenziRazouk2007}
% {\sc Benzi, M., and Razouk, N.}
% \newblock On the {I}wasawa decomposition of a symplectic matrix.
% \newblock {\em Appl. Math. Lett. 20}, 3 (2007), 260--265.

\bibitem{BonamiDemange2006}
{\sc Bonami, A., and Demange, B.}
\newblock A survey on uncertainty principles related to quadratic forms.
\newblock {\em Collect. Math.}, Vol. Extra (2006), 1--36.

\bibitem{CarmonaEtAl1998}
{\sc Carmona, R., Hwang, W.-L., and Torr\'{e}sani, B.}
\newblock {\em Practical time-frequency analysis}, vol.~9 of {\em Wavelet
  Analysis and its Applications}.
\newblock Academic Press, Inc., San Diego, CA, 1998.
\newblock Gabor and wavelet transforms with an implementation in S, With a
  preface by Ingrid Daubechies.

\bibitem{Cordero_Rodino}
{\sc Cordero, E., and Rodino, L.}
\newblock {\em Time-frequency analysis of operators}, vol.~75 of {\em De
  Gruyter Studies in Mathematics}.
\newblock De Gruyter, Berlin, 2020.

\bibitem{CorderoRodino2022}
{\sc Cordero, E., and Rodino, L.}
\newblock Wigner analysis of operators. {P}art {I}: {P}seudodifferential
  operators and wave fronts.
\newblock {\em Appl. Comput. Harmon. Anal. 58\/} (2022), 85--123.

\bibitem{CorderoRodino2023}
{\sc Cordero, E., and Rodino, L.}
\newblock Characterization of modulation spaces by symplectic representations
  and applications to {S}chr\"{o}dinger equations.
\newblock {\em J. Funct. Anal. 284}, 9 (2023), Paper No. 109892, 40.

\bibitem{Gosson2006}
{\sc de~Gosson, M.}
\newblock {\em Symplectic Geometry and Quantum Mechanics}.
\newblock Birkhäuser Basel, 2006.

\bibitem{Gosson2011}
{\sc de~Gosson, M.~A.}
\newblock {\em {Symplectic Methods in Harmonic Analysis and in Mathematical
  Physics}}.
\newblock Springer Basel, 2011.

\bibitem{Feichtinger2006}
{\sc Feichtinger, H.~G.}
\newblock {Modulation Spaces: Looking Back and Ahead}.
\newblock {\em Sampl. Theory Signal Process. Data Anal. 5}, 2 (2006), 109--140.

\bibitem{Flandrin1999}
{\sc Flandrin, P.}
\newblock {\em Time-frequency/time-scale analysis}, vol.~10 of {\em Wavelet
  Analysis and its Applications}.
\newblock Academic Press, Inc., San Diego, CA, 1999.
\newblock With a preface by Yves Meyer, Translated from the French by Joachim
  St\"{o}ckler.

\bibitem{Folland1989}
{\sc Folland, G.~B.}
\newblock {\em {Harmonic Analysis in Phase Space. ({AM}-122)}}.
\newblock Princeton University Press, 1989.

\bibitem{FollandSitaram1997}
{\sc Folland, G.~B., and Sitaram, A.}
\newblock The uncertainty principle: a mathematical survey.
\newblock {\em J. Fourier Anal. Appl. 3}, 3 (1997), 207--238.

\bibitem{FuerRzeszotnik2018}
{\sc F\"{u}hr, H., and Rzeszotnik, Z.}
\newblock A note on factoring unitary matrices.
\newblock {\em Linear Algebra Appl. 547\/} (2018), 32--44.

\bibitem{gabor1946}
{\sc Gabor, D.}
\newblock {Theory of communication. Part 1: The analysis of information}.
\newblock {\em J. IEE (London), 93(III) 93}, 26 (1946), 429--441.

\bibitem{Groechenig2003}
{\sc Gr\"{o}chenig, K.}
\newblock Uncertainty principles for time-frequency representations.
\newblock In {\em Advances in {G}abor analysis}, Appl. Numer. Harmon. Anal.
  Birkh\"{a}user Boston, Boston, MA, 2003, pp.~11--30.

\bibitem{GroechenigZimmermann2001}
{\sc Gr\"{o}chenig, K., and Zimmermann, G.}
\newblock Hardy's theorem and the short-time {F}ourier transform of {S}chwartz
  functions.
\newblock {\em J. London Math. Soc. (2) 63}, 1 (2001), 205--214.

\bibitem{Groechenig2001}
{\sc Gröchenig, K.}
\newblock {\em {Foundations of Time-Frequency Analysis}}.
\newblock Birkhäuser Boston, 2001.

\bibitem{Hall2015}
{\sc Hall, B.}
\newblock {\em Lie groups, {L}ie algebras, and representations}, second~ed.,
  vol.~222 of {\em Graduate Texts in Mathematics}.
\newblock Springer International Publishing, 2015.

\bibitem{HavinJoericke1994}
{\sc  Havin, V.~P. and J{\"o}ricke, B.}
\newblock {\em The uncertainty principle in harmonic analysis}.
\newblock In {\em Commutative harmonic analysis, III}, pages 177--259,
  261--266. Springer, Berlin, 1995.

\bibitem{HlawatschBoudreaux1992}
{\sc Hlawatsch, F., and Boudreaux-Bartels, G.}
\newblock Linear and quadratic time-frequency signal representations.
\newblock {\em IEEE Signal Process. Mag. 9}, 2 (Apr. 1992), 21--67.

\bibitem{Iwasawa1949}
{\sc Iwasawa, K.}
\newblock On some types of topological groups.
\newblock {\em Ann. Math. 50}, 3 (1949), 507--558.

\bibitem{Jaming1998}
{\sc Jaming, P.}
\newblock Principe d'incertitude qualitatif et reconstruction de phase pour la
  transform\'{e}e de {W}igner.
\newblock {\em C. R. Acad. Sci. Paris S\'{e}r. I Math. 327}, 3 (1998),
  249--254.

\bibitem{Jaming2022}
{\sc Jaming, P.}
\newblock A simple observation on the uncertainty principle for the fractional
  {F}ourier transform.
\newblock {\em J. Fourier Anal. Appl. 28}, 3 (2022), Paper No. 51, 8.

\bibitem{janssentau}
{\sc Janssen, A.~J. E.~M.}
\newblock On the locus and spread of pseudodensity functions in the
  time-frequency plane.
\newblock {\em Philips J. Res. 37}, 3 (1982), 79--110.

\bibitem{Janssen1998_supp}
{\sc Janssen, A. J. E.~M.}
\newblock Proof of a conjecture on the supports of {W}igner distributions.
\newblock {\em J. Fourier Anal. Appl. 4}, 6 (1998), 723--726.

\bibitem{Knapp_LieBeyond}
{\sc Knapp, A.~W.}
\newblock {\em Lie groups beyond an introduction}, vol.~140 of {\em Progress in
  Mathematics}.
\newblock Birkh\"{a}user Boston, Inc., Boston, MA, 1996.

\bibitem{Namias1980}
{\sc Namias, V.}
\newblock The fractional order {F}ourier transform and its application to
  quantum mechanics.
\newblock {\em J. Inst. Math. Appl. 25}, 3 (1980), 241--265.

\bibitem{RicaudTorresani2014}
{\sc Ricaud, B., and Torr\'{e}sani, B.}
\newblock A survey of uncertainty principles and some signal processing
  applications.
\newblock {\em Adv. Comput. Math. 40}, 3 (2014), 629--650.

% \bibitem{Sawyer2016}
% {\sc Sawyer, P.}
% \newblock Computing the {I}wasawa decomposition of the classical {L}ie groups
%   of noncompact type using the {$QR$} decomposition.
% \newblock {\em Linear Algebra Appl. 493\/} (2016), 573--579.

% \bibitem{Tam2006}
% {\sc Tam, T.-Y.}
% \newblock Computing the {I}wasawa decomposition of a symplectic matrix by
%   {C}holesky factorization.
% \newblock {\em Appl. Math. Lett. 19}, 12 (2006), 1421--1424.

\bibitem{terMorscheOonincx2002O}
{\sc ter Morsche, H.~G., and Oonincx, P.~J.}
\newblock {On the Integral Representations for Metaplectic Operators}.
\newblock {\em J. Fourier Anal. Appl. 8\/} (2002), 245--258.

\bibitem{Torresani1999}
{\sc Torr\'{e}sani, B.}
\newblock Time-frequency analysis, from geometry to signal processing.
\newblock In {\em Contemporary problems in mathematical physics ({C}otonou,
  1999)}. World Sci. Publ., River Edge, NJ, 2000, pp.~74--96.

\bibitem{Weil1964}
{\sc Weil, A.}
\newblock Sur certains groupes d'opérateurs unitaires.
\newblock {\em Acta Math.}, 111 (1964), 143–211.

\bibitem{Wiener1929}
{\sc Wiener, N.}
\newblock {H}ermitian polynomials and {F}ourier analysis.
\newblock {\em J. Math. and Phys. 8}, 1–4 (Apr. 1929), 70--73.

\bibitem{Wilczok2000}
{\sc Wilczok, E.}
\newblock New uncertainty principles for the continuous {G}abor transform and
  the continuous wavelet transform.
\newblock {\em Doc. Math. 5\/} (2000), 201--226.

\bibitem{Woodward1964}
{\sc Woodward, P.~M.}
\newblock {\em Probability and information theory, with applications to radar},
  second~ed.
\newblock Pergamon Press, Oxford-Edinburgh-New York-Paris-Frankfurt, 1964.

\bibitem{Zhang2023}
{\sc Zhang, Z.}
\newblock Uncertainty principle for free metaplectic transformation.
\newblock {\em J. Fourier Anal. Appl. 29}, 6 (2023), Paper No. 71, 33.

\bibitem{Zhang2019}
{\sc Zhang, Z.-C.}
\newblock Uncertainty principle for linear canonical transform using matrix
  decomposition of absolute spread matrix.
\newblock {\em Digit. Signal Process. 89\/} (2019), 145--154.

\end{thebibliography}
\end{document}